\def\e{\epsilon}
\def\R{\mathbb{R}}
\def\II{{\rm I\kern-0.5exI}}
\def\III{{\rm I\kern-0.5exI\kern-0.5exI}}
\newcommand{\norm}[1]{\lVert #1 \rVert}
\newcommand{\RR}{\mathbb{R}}
\newcommand{\ZZ}{\mathbb{Z}}
\DeclareMathOperator*{\argmin}{argmin}
\DeclareMathOperator*{\esssup}{ess\, sup}
\DeclareMathOperator*{\essinf}{ess\, inf}
\DeclareMathOperator*{\argmax}{argmax}
\DeclareSymbolFont{bbold}{U}{bbold}{m}{n}
\DeclareSymbolFontAlphabet{\mathbbold}{bbold}
\newcommand{\id}{id}
\newcommand{\vp}{\varphi}
\DeclareMathOperator{\sgn}{sgn}
\numberwithin{equation}{section}
\theoremstyle{plain}
\newtheorem{theorem}{Theorem}[section]
\newtheorem{lemma}[theorem]{Lemma}
\newtheorem{prop}[theorem]{Proposition}
\theoremstyle{definition}
\newtheorem{definition}[theorem]{Definition}
\theoremstyle{remark}
\newtheorem{remark}[theorem]{Remark}
\begin{document}
\title{The $L^1$-contraction principle in optimal transport}
\date{\today}

\author[M. Jacobs]{Matt Jacobs}
\address{Department of Mathematics, UCLA, 520 Portola Plaza, Los Angeles, CA 90095, USA}
\email{majaco@math.ucla.edu}

\author[I. Kim]{Inwon Kim}
\address{Department of Mathematics, UCLA, 520 Portola Plaza, Los Angeles, CA 90095, USA}
\email{ikim@math.ucla.edu}

\author[J. Tong]{Jiajun Tong}
\address{Department of Mathematics, UCLA, 520 Portola Plaza, Los Angeles, CA 90095, USA}
\email{jiajun@math.ucla.edu}

\begin{abstract}
In this work we use the JKO scheme to approximate a general class of diffusion problems generated by Darcy's law.  Although the scheme is now classical, if the energy density is spatially inhomogeneous or irregular, many standard methods fail to apply to establish convergence in the continuum limit. To overcome these difficulties, we analyze the scheme through its dual problem and establish a novel $L^1$-contraction principle for the density variable. Notably, the contraction principle relies only on the existence of an optimal transport map and the convexity structure of the energy. As a result, the principle holds in a very general setting, and opens the door to using optimal-transport-based variational schemes to study a larger class of non-linear inhomogeneous parabolic equations.

\end{abstract}

\maketitle

\section{Introduction}
\label{sec: intro}

Darcy's law describes a fluid flowing along a pressure gradient with the assumption that inertial forces are negligible.
     In this paper, we consider Darcy's law where the pressure is generated by a spatially inhomogeneous and convex energy functional.
To be more precise, for $T>0$, we consider 
\begin{equation*}
\left\{\begin{array}{ll}
\rho_t - \nabla \cdot(\rho \nabla p ) =0 \; \hbox{ in } \Omega\times (0,T], \\
p \in \partial E(\rho),
\end{array}\right.\leqno (P)
\end{equation*}
with initial data $\rho_0$, and with $p$ satisfying the homogeneous Neumann boundary condition on $ \partial\Omega\times[0,T]$.
Here $\rho=\rho(x,t)$ represents the density of a certain material flowing in a bounded smooth domain $\Omega\subset \RR^d$ and $p = p(x,t)$ is the pressure generated by $\rho$ according to a free energy $E(\rho)$.
We will focus on proper, convex, lower semi-continuous energies of the form
\begin{equation}\label{eq:energy}
E(\rho) =\chi_{\Omega}(\rho)+  \int_\Omega s(\rho(x),x)\,dx,
\end{equation}
where
\[
\chi_{\Omega}(\rho)=
\begin{cases}
0 & \textrm{if} \;\; \rho(x)=0 \;\; \textrm{for all} \; x\notin \Omega,\\
+\infty & \textrm{otherwise}\\
\end{cases}
\]
restricts the density to $\Omega$, and $s(z,x) :\RR\times \Omega\to\RR\cup \{+\infty\}$ denotes a spatially inhomogeneous free energy density that is convex in the first variable. Note that when $s$ is smooth $(P)$ can be written as a parabolic PDE $\rho_t - \nabla \cdot(\rho \nabla (\partial_z s(\rho,x)))  = 0$ for nonnegative densities $\rho(x,t)$. 

\medskip

Since the pioneering work of Otto et al.\;\cite{jko, otto_pme}, optimal transport has been used extensively to study and model a large class of dissipative PDEs in the form of Darcy's law via minimizing movement scheme, also known in this context as the \emph{JKO scheme}.  The JKO scheme is a discrete-in-time approximation of the problem, based on the interpretation of the equation as a gradient flow of the energy $E(\rho)$ with respect to the 2-Wasserstein metric.  From a modeling perspective, the energy structure of the JKO scheme is itself meaningful and natural.   Furthermore, the JKO scheme is particularly useful for numerical simulations as the variational structure automatically provides unconditional stability.

\medskip

Our goal in this paper is to study the equation ($P$) via the JKO scheme and establish the convergence of the scheme to the continuum limit.
To study convergence, an important concept in the literature has been the geodesic convexity of the energy \cite{AmbGigSav,OTAM}.  When the energy density function $s$ is spatially inhomogenous,  geodesic convexity seems unlikely to hold in general.  Even verifying this lack of geodesic convexity seems challenging: see \cite{DiFM} for a relevant discussion in one space dimension.  In the absence of the geodesic convexity, various approaches are still available to find compactness properties to obtain convergence to the continuum limit; see for instance \cite{matthes2009,bv_ot,maury_crowd,alpar_dohyun}.  However, all of these results are limited to the first-order inhomogeneity $s(z,x)=s_1(z)+zf(x)$ and many require the domain $\Omega$ to be convex.  

\medskip

In this paper, we establish a novel $L^1$-contraction principle for the minimizing movement scheme for a broad class of spatially inhomogeneous energy densities $s$ (see assumptions \ref{assumption: convexity}-\ref{assumption: effective domain} below).  While the $L^1$-contraction property is well-known for the continuum PDEs \cite{Carrillo,otto_contraction}, our result appears to be the first such one for the discrete solutions generated by the minimizing movements scheme.    The principle relies only on the existence of an optimal transport map and the convexity structure of the energy \eqref{eq:energy}. As a result, the principle holds in a very general setting, and it opens the door to using JKO scheme to study a large class of non-linear diffusion equations (see the discussion below).    For this reason, in the first half of the paper, we will consider a more general version of the JKO scheme where we replace the 2-Wasserstein metric with a more general transport cost function.  Here, our analysis will focus on the variational structure of the scheme, and show how the interplay between primal and dual variables leads to the $L^1$-contraction principle.  In the second half of the paper, we will return to the particular case of the 2-Wasserstein metric and use the $L^1$-contraction principle to establish convergence of the scheme to the continuum solution of $(P)$.
\medskip

Let us now introduce the minimizing movement scheme.  Given a time step $\tau>0$, the classical JKO scheme constructs an approximate solution to the PDE $(P)$ by iterating
\begin{equation}\label{eq:classic_jko}
\rho^{n+1,\tau}=\argmin_{\rho} E(\rho)+\frac{1}{2\tau}W_2^2(\rho,\rho^{n,\tau}),
\end{equation}
where $W_2^2$ is the squared 2-Wasserstein distance.  We will consider a generalized version of this variational problem where the 2-Wasserstein metric is replaced by a general optimal transport cost.
Given two nonnegative measures, $\mu$ and $\nu$, of equal mass supported on $\Omega$ and a transport cost $c:\Omega\times\Omega\to[0,\infty)$, the total transport cost between $\mu$ and $\nu$ with respect to $c$ is
\begin{equation}\label{cost}
C(\mu,\nu):=\inf_{\pi \in \Pi(\mu,\nu)} \int_{\Omega\times\Omega} c(x,y)\,d\pi(x,y),
\end{equation}
where $\Pi(\mu,\nu)$ is the set of nonnegative measures on $\Omega\times\Omega$ with first marginal $\mu$ and second marginal $\nu$.
For a given density $\bar{\rho}$, we consider the \emph{primal problem} 
\begin{equation}\label{eq:primal_problem}
\argmin_{\rho} J(\rho,\bar{\rho}),\quad \mbox{ where } J(\rho,\bar{\rho}):= E(\rho) + C(\rho, \bar{\rho}).
\end{equation}
In the case $c(x,y) = \frac{1}{2\tau}|x-y|^2$, this recovers \eqref{eq:classic_jko}.

The variational problem \eqref{eq:primal_problem} is convex in $\rho$, so we can introduce the equivalent \emph{dual problem} 
\begin{equation}\label{eq:dual_problem}
\argmax_{p} J^*(p,\bar{\rho}),\quad \mbox{ where } J^*(p,\bar{\rho})=\int_{\Omega} \bar{\rho}(x) p^c(x)\, dx -E^*(p),
\end{equation}
and where the maximizer corresponds to the pressure variable in $(P)$.
Here
$$
E^*(p) :=\int_{\Omega}s^*(p(x),x)\,dx, \qquad s^*(p,x):= \sup_{z\in\mathbb{R}}\big(pz - s(z,x)\big),
$$
and
\begin{equation*}
p^{c}(x):=\inf_{y\in\Omega} p(y)+c(x,y)
\end{equation*}
is the {\it $c$-transform} of $p$, which plays an essential role in the theory of optimal transport.    Let us note that the only conditions that we require for $c$ are those needed to guarantee the existence of an optimal transport map and to make $c$ behave similarly to a distance function (see \ref{assumption: symmetry of c}-\ref{assumption: twist condition} in section \ref{sec: assumptions and main results}).

\medskip

Much of our subsequent analysis  in both parts of the paper will focus on the dual problem \eqref{eq:dual_problem}.
 The advantage of the dual problem over the primal problem is that variations of the $c$-transform are easier to study than variations of the transport cost $C(\rho, \bar{\rho})$.  In addition, the optimal pressure variable has better regularity properties than the optimal density variable.   To recover information about the optimal density, we shall exploit the fact that the primal and dual variables are very closely linked.   Any optimal density
$\rho^*\in \argmin_{\rho} J(\rho,\bar{\rho})$ and any optimal pressure $p^*\in \argmax_p J^*(p,\bar{\rho})$ are linked through the duality relation $p^*\in\partial E(\rho^*)$.  Furthermore, whenever the optimal map $T$ between $\bar{\rho}$ and $\rho^*$ exists, it must solve the equation $\nabla p^*(T(x))+\nabla_y c(x,T(x))=0$ (see Proposition \ref{prop:primal_dual}).   This interplay between the primal and dual problems will be essential in our derivation of the $L^1$-contraction principle.

From the perspective of the continuum PDE $(P)$, the focus on the dual problem is also natural. Again through the equivalent duality relations $p(x)\in \partial_z s(\rho(x),x)$ and $\rho(x)\in \partial_p s^*(p(x),x)$,  the system $(P)$ can be rewritten as a nonlinear diffusion problem in terms of the pressure:
\begin{equation}\label{pressure_pde}
(a(p,x))_t -\nabla\cdot(\nabla s^*(p,x)-\vec{b}(p,x)) = 0,
\end{equation}
where $a(p,x) = \rho(x,t)= \partial_p s^*(p,x)$ and $\vec{b}(p,x)= \partial_x s^*(p,x)$. Compared to the density version of the equation, \eqref{pressure_pde} allows lower regularity for the energy density $s$.

In section \ref{sec: continumm limit}, we will show that our scheme converges to a weak solution of \eqref{pressure_pde} in the sense of \cite{Carrillo}. We further characterize the transport velocity $-\nabla p$ for $(P)$ in the density support, see Theorem~\ref{thm:pde_existence02} below. While uniqueness results hold for the spatially homogeneous case \cite{Carrillo} and for certain inhomogeneous cases (see section \ref{sec: uniqueness}), the complete uniqueness result for our notion of weak solutions remains open in general setting.

\medskip

Before stating the main results, let us emphasize that the enlarged class of costs that we consider are not artificial.   Indeed, by allowing more general costs one obtains interesting generalizations of Darcy's law.   Consider costs given by the Lagrangian action
\[
c_{\tau}(x,y)=\inf_{\gamma\in \Gamma_{\tau}(x,y)} \int_0^{\tau} L(\gamma'(t), \gamma(t))\, dt,
\]
where $\Gamma_{\tau}(x,y)=\{\gamma\in C^1([0,\tau]\to \Omega): \gamma(0)=x, \gamma(\tau)=y\}$ is the set of paths from $x$ to $y$ and the Lagrangian $L:\RR^d\times\RR^d\to[0,\infty)$ is $C^1$ and convex in the first variable. Let $H$ be the Hamiltonian corresponding to $L$, i.e.,
\[
H(q,x)=\sup_{v\in \RR^d} v\cdot q-L(v,x).
\]
Then the minimizing movement scheme with the cost $c_{\tau}$ formally approximates the PDE
\[
\left\{\begin{array}{ll}
\rho_t(x) - \nabla \cdot\big(\rho(x) \nabla_q H\big(\nabla p(x),x\big) \big) =0 \; \hbox{ in } \Omega\times (0,T]; \\
p(x) \in \partial s(\rho(x),x),
\end{array}\right.
\]
Note that in the special case $H(q,x)=\frac{1}{2}|A(x)q|^2$, where $A:\Omega\to\RR^{d\times d}$ is some non-degenerate matrix field, one obtains the anisotropic version of Darcy's law $v=-A(x)^\intercal A(x)\nabla p$.
In this special case and indeed in any case where $ \nabla_q H(\nabla p(x),x)$ is linear with respect to $\nabla p$, the techniques of this paper could be applied to obtain the convergence of the scheme to the continuum limit.  However,  when  $\nabla_q H(\nabla p(x),x)$ is non-linear with respect to $\nabla p$, passing to the limit becomes much more difficult and would require additional new ideas.  Regardless, for simplicity, when discussing continuum PDEs we shall only discuss the equation $(P)$.

\subsection{Assumptions and main results}
\label{sec: assumptions and main results}
Let us now introduce precise assumptions on the energy and the cost functions. For the $L^1$-contraction principle, we only need the following minimal assumptions on the energy functional, which will be assumed throughout the paper.

\begin{enumerate}[label=(s\arabic*)]
\setcounter{enumi}{0}
\item \label{assumption: convexity} For all $x\in\Omega$, $s(\cdot,x)$ is a proper, lower semi-continuous, convex function.
\item \label{assumption: effective domain} $s(z,x) \equiv+\infty$ if $z<0$, $s(0,x) = 0$, 
\[
-\infty<  \inf_{(z,x)\in\RR\times\Omega} s(z,x), \quad \textrm{and}\quad \lim_{z\to+\infty} \inf_{x\in\Omega} \dfrac{s(z,x)}{z}=+\infty.
 \]
\end{enumerate}

For the cost function in \eqref{cost}, we always assume that it satisfies
\begin{enumerate}[label=(c\arabic*)]
\setcounter{enumi}{0}
\item \label{assumption: symmetry of c} symmetry: $c(x,y)=c(y,x)$, and $c(x,x)=0$.
\item \label{assumption: regularity of c} continuity: $c\in C^1_{loc}(\RR^d\times\RR^d\to [0,\infty))$.
\item \label{assumption: twist condition} the twist condition: for all $x_0\in \RR^d$ the map $y\mapsto \nabla_x c(x_0,y)$ is injective over $\RR^d$.
\end{enumerate}
These assumptions guarantee that there exists an optimal transport map for the cost $c$ between any two absolutely continuous measures with the same mass \cite{OTAM}.

\medskip

For the convergence analysis in the second part of our paper, we will require further regularity properties for the dual energy $s^*$ as follows:
\begin{enumerate}[label=(s\arabic*)]
\setcounter{enumi}{2}

\item \label{assumption: regularity of s*} $s^*$ is differentiable, with its derivatives $\partial_p s^*(p,x)$ and $\nabla_x s^*(p,x)$ continuous with respect to $p$ for all $x\in \Omega$. Moreover for any finite $b$ we have
\[
\int_{\Omega} \norm{\nabla_x s^*(\cdot,x)}^2_{C\big((-\infty,b)\big)}\, dx<+\infty.
\]

\item \label{assumption: asymptotics of s*} $\lim_{\alpha\to -\infty} \esssup_{x\in\Omega}\, \partial_p s^*(\alpha,x)=0$, and $\lim_{\alpha\to+\infty} \essinf_{x\in\Omega} \partial_p s^*(\alpha,x)>0$.

\item \label{assumption: inf sup ratio of partial s*} For any $\alpha\in \RR$ there exists $M_{\alpha}\in (0,\infty)$ such that
    \[
    \esssup_{x\in \Omega} \partial_p s^*(\alpha, x)\leq M_{\alpha}\essinf_{x\in\Omega} \partial_p s^*(\alpha,x).
    \]
\end{enumerate}

\begin{remark}
The assumption \ref{assumption: regularity of s*} ensures sufficient regularity for the existence of weak solutions to \eqref{pressure_pde}. Note that \ref{assumption: regularity of s*} is equivalent to the strict convexity of $z\mapsto s(z,x)$ for $z\in \partial s^*(\RR, x)$.
\ref{assumption: asymptotics of s*} guarantees the existence of a stationary solution to the primal problem with a given mass, while \ref{assumption: inf sup ratio of partial s*} ensures that the flow cannot spontaneously form a vacuum when the density is everywhere bounded away from zero. Together, assumptions \ref{assumption: asymptotics of s*} and \ref{assumption: inf sup ratio of partial s*} allow us to construct barriers that give uniform lower bounds on solutions starting from certain strictly positive initial data.   Crucially, these barriers will allow us to approximate any solution with a non-degenerate and regular solution.
\end{remark}
\begin{remark}
Let us note that a simple class of energy densities satisfying \ref{assumption: convexity}-\ref{assumption: inf sup ratio of partial s*} are given by the multiplicative structure $s(z,x) = f(x) g(z)$, where $f:\bar{\Omega}\to \RR$ is smooth and strictly positive and $g$ is a convex and superlinear function such that $\partial g(0)\subset [-\infty,0]$ and $\partial g(z)\subset (0,\infty)$ if $z>0$.  This includes many familiar choices, for example,
$$
g(z)= z\ln z -z,  \quad  g(z) =\frac{1}{m-1} z^m\,\,  (m>1), \quad\mbox{ or } \quad g(z) = \frac{1}{m-1} z^m + z^2 \,\, (m<1).
$$
One can then create a larger class of energy densities by taking sums $s(z,x)=\sum_{i=1}^k f_i(x)g_i(z)$ or infimal convolutions $ s(z,x)=\inf_{\sum_{i=1}^k z_i=z} \sum_{i=1}^k f_i(x)g_i(z_i)$ of copies of the multiplicative structure described above.

\end{remark}

To state the main results, first let us only assume $s$ satisfies \ref{assumption: convexity}-\ref{assumption: effective domain} and $c$ satisfies \ref{assumption: symmetry of c}-\ref{assumption: twist condition}.
Define
\begin{equation}\label{eqn: def of X}
X:=\{\rho\in L^1(\Omega): E(\rho)<\infty\}
\end{equation}
and
\begin{equation}\label{eqn: def of X*}
X^*:=\{p:\Omega\to [-\infty,+\infty]:\, p\mathrm{\;is\;measurable\;and\;}E^*(p)<\infty\}.
\end{equation}
For the primal problem \eqref{eq:primal_problem}, we assume the initial density to satisfy
\begin{equation}\label{eq:good_data0}
0<\int_{\Omega}\bar{\rho}(x)\, dx<\lim_{b\to\infty} \int_{\Omega} \sup\partial s^*(b,x) dx.
\end{equation}
Here $\partial s^*(b,x)$ denotes the subdifferential of $s^*(\cdot,x)$ with respect to the first variable at $b\in\mathbb{R}$.
It is noteworthy that \eqref{eq:good_data0} can be equivalently written as
\[
\lim_{b\to-\infty} \int_{\Omega} \sup\partial s^*(b,x) dx<\int_{\Omega}\bar{\rho}(x)\, dx<\lim_{b\to\infty} \int_{\Omega} \inf\partial s^*(b,x)\, dx.
\]
It will be shown in Lemma \ref{lem:s_star_increasing} that $\lim_{b\to-\infty} \int_{\Omega} \sup\partial s^*(b,x) dx=0$.   As for switching the sup to an inf in the upper bound,  since $\partial s^*(\cdot,x)$ is increasing, it is true that for all $x\in \Omega$, $b\in\mathbb{R}$ and $\varepsilon>0$,
\[
\inf\partial s^*(b,x)\leq \sup\partial s^*(b,x)\leq \inf\partial s^*(b+\varepsilon,x).
\]
Hence, the limits must be the same.
 \eqref{eq:good_data0} guarantees that the corresponding maximizing pressure in the dual problem \eqref{eq:dual_problem} will be finite (see the proof of Proposition \ref{prop:primal_dual}).

\begin{theorem}[Discrete $L^1$-contraction property, Theorem \ref{thm:l1_contraction}]
\label{thm:l1_contraction_intro}
Let $\rho_0, \rho_1\in X$ satisfy \eqref{eq:good_data0}, and 
\[
\rho_i^*=\argmin_{\rho\in X} J(\rho,\rho_i),
\]
where $J(\rho,\rho_i)$ is defined in \eqref{eq:primal_problem}.
Then we have
\[
\norm{(\rho_1^*-\rho_0^*)_+}_{L^1(\Omega)}\leq \norm{(\rho_1-\rho_0)_+}_{L^1(\Omega)}.
\]
\end{theorem}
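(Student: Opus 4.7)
The plan is to pass to the dual problem and derive the contraction from a pointwise $c$-monotonicity identity relating the two optimal transport maps. By Proposition~\ref{prop:primal_dual}, I fix optimal pressures $p_i^*$ and optimal transport maps $T_i$ satisfying $(T_i)_\#\rho_i=\rho_i^*$, the pointwise duality $\rho_i^*(y)\in\partial_p s^*(p_i^*(y),y)$ for a.e.\ $y$, and the identity $(p_i^*)^c(x)=p_i^*(T_i(x))+c(x,T_i(x))$ for a.e.\ $x$.

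The crucial first step is a pointwise comparison of $p_1^*-p_0^*$ along the two maps. Using the infimum defining the $c$-transform at $y=T_0(x)$ for $(p_1^*)^c(x)$ and at $y=T_1(x)$ for $(p_0^*)^c(x)$, one has
\begin{align*}
p_1^*(T_1(x))+c(x,T_1(x))&\leq p_1^*(T_0(x))+c(x,T_0(x)),\\
p_0^*(T_0(x))+c(x,T_0(x))&\leq p_0^*(T_1(x))+c(x,T_1(x)).
\end{align*}
Summing cancels the transport costs and yields the key inequality $(p_1^*-p_0^*)(T_1(x))\leq(p_1^*-p_0^*)(T_0(x))$ a.e. With $A:=\{p_1^*>p_0^*\}$, this gives the preimage inclusion $T_1^{-1}(A)\subseteq T_0^{-1}(A)$. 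Monotonicity of the subdifferential $\partial_p s^*(\cdot,y)$ then forces $\rho_1^*\geq\rho_0^*$ on $A$ and $\rho_1^*\leq\rho_0^*$ on $\{p_1^*<p_0^*\}$, so $(\rho_1^*-\rho_0^*)_+$ is supported in $\{p_1^*\geq p_0^*\}$.

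The main estimate is then a one-line pushforward computation:
\[
\int_{A}(\rho_1^*-\rho_0^*)\,dx=\int_{T_1^{-1}(A)}\rho_1\,dx-\int_{T_0^{-1}(A)}\rho_0\,dx\leq\int_{T_0^{-1}(A)}(\rho_1-\rho_0)\,dx\leq\|(\rho_1-\rho_0)_+\|_{L^1(\Omega)},
\]
where the first inequality uses the preimage inclusion together with $\rho_1\geq 0$. Since $\rho_1^*\geq\rho_0^*$ on $A$, the left-hand side equals $\int_A(\rho_1^*-\rho_0^*)_+\,dx$, delivering the contraction on the strict pressure-ordering part.

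The hardest point is controlling the contribution of $(\rho_1^*-\rho_0^*)_+$ coming from the level set $\{p_1^*=p_0^*\}$, where $\partial_p s^*(p_0^*(y),y)$ can be multi-valued so $\rho_1^*>\rho_0^*$ may occur outside $A$. I would handle this by a strictly convex regularization: replace $s(z,x)$ by $s_\varepsilon(z,x):=s(z,x)+\tfrac{\varepsilon}{2}z^2$, for which $\partial_p s_\varepsilon^*$ is single-valued; the argument above then runs cleanly and yields the contraction for the regularized minimizers $\rho_{i,\varepsilon}^*$. One then passes to $\varepsilon\downarrow 0$ using $L^1$-stability of the primal--dual system and the continuity of $\rho\mapsto\|\rho_+\|_{L^1}$ to obtain the claim for $\rho_i^*$.
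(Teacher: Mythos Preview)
Your proposal is correct and follows essentially the same route as the paper. Both arguments hinge on the observation (the paper's Lemma~\ref{lem:fundamental_property}) that $T_1(x)\in A=\{p_1^*>p_0^*\}$ forces $T_0(x)\in A$, then push the indicator of $A$ back along the two maps; and both reduce the general case to a strictly convex energy via regularization and a limiting argument.

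Two minor points deserve attention. First, your regularization $s_\varepsilon(z,x)=s(z,x)+\tfrac{\varepsilon}{2}z^2$ requires $\rho_i\in L^2(\Omega)$ for $E_\varepsilon(\rho_i)<\infty$, which is not guaranteed by $\rho_i\in X$ under \ref{assumption: convexity}--\ref{assumption: effective domain} alone; the paper uses $s_\delta(z,x)=s(z,x)+\delta(\sqrt{1+z^2}-1)$, whose linear growth ensures $X_\delta=X$ and makes the bound $E(\rho_{i,\delta}^*)\le E_\delta(\rho_i)\le E(\rho_i)+\delta\|\rho_i\|_{L^1}$ automatic. Second, your appeal to ``$L^1$-stability of the primal--dual system'' is precisely where the paper does the most work: it establishes uniform $E$-bounds on $\rho_{i,\delta}^*$, invokes de la Vall\'ee-Poussin for weak $L^1$-compactness, uses lower semicontinuity of $E$ and of $C(\cdot,\rho_i)$ together with uniqueness of the primal minimizer (Proposition~\ref{prop:primal_dual}) to identify the weak limit as $\rho_i^*$, and then passes to the limit in the contraction inequality via weak-$L^1$ lower semicontinuity of $\rho\mapsto\|\rho_+\|_{L^1}$. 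These are details rather than gaps, but they are the substance of the passage to the limit and should be made explicit.
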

\begin{remark}
In the special case where the energy is translation invariant (i.e.\;$\Omega=\RR^d$ and $s(z,x)=s(z)$),  our contraction result should result in BV estimates akin to that of \cite{bv_ot}.   The translation invariance allows one to apply the contraction principle to the difference between a density and its translated version.  Hence, for any $y\in \RR^d$ one has the inequality
 \[
 \int_{\RR^d} |\rho^*(x+y)-\rho^*(x)|\leq \int_{\RR^d} |\rho(x+y)-\rho(x)|
 \]
 which can be readily converted into a BV norm inequality.  Note that this will also hold for a general domain $\Omega$ and a spatially homogeneous energy, as long as one knows that the density stays compactly supported away from the boundary.
\end{remark}
\medskip

Based on Theorem~\ref{thm:l1_contraction_intro}, we obtain the discrete comparison principle for both density and pressure variable. To our best knowledge, the only previous comparison result  is \cite{AKY}, which addresses the particular case of $s(z)=\frac{1}{m-1}z^m$ with
$m>1$ and the quadratic cost. In addition to the inherent interest of the comparison principle, it will also prove to be useful in our approximation argument (see the discussion below).

\begin{theorem}[Discrete comparison principle, Lemma \ref{lem:big_max_unique}, Lemma \ref{lem:pressure_special}, and Theorem \ref{thm: direct comparison of rho and p}]
\label{thm: direct comparison of rho and p intro}
Under the assumptions of Theorem~\ref{thm:l1_contraction_intro}, suppose $\rho_0\leq \rho_1$ a.e.\;in $\Omega$.
Then
\begin{enumerate}
\item $\rho_0^*\leq \rho_1^*$ a.e.\;in $\Omega$.

\item
With $i = 0,1$, there exists a largest and a smallest $c$-concave maximizing pressure in
\[
\argmax_{p\in X^*,\,p^{c\bar{c}}=p} J^*(p,\rho_i),
\]
denoted by $p_i^+$ and $p_i^-$ respectively, in the sense that $p_i^-(x)\leq \tilde{p}_i(x)\leq p_i^+(x)$ in $\Omega$ for any
\[
\tilde{p}_i\in \argmax_{p\in X^*,\,p^{c\bar{c}}=p} J^*(p,\rho_i).
\]
Here $J^*(p,\rho_i)$ is defined as in \eqref{eq:dual_problem}, and
\begin{equation*}
q^{\bar{c}}(x):=\sup_{y\in \Omega} q(y)-c(x,y)
\end{equation*}
is the $\bar{c}$-transform.
$p$ is called $c$-concave if and only if $p^{c\bar{c}} = p$.

For such $p_i^\pm$, we have $p_0^+\leq p_1^+$ and $p_0^-\leq p_1^-$ in $\Omega$.
\end{enumerate}
\end{theorem}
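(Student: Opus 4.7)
Part (1) is a quick consequence of the $L^1$-contraction principle (Theorem~\ref{thm:l1_contraction_intro}) together with mass conservation for the primal problem. Since any finite-cost plan in \eqref{cost} must have equal marginals, $\int_\Omega \rho_i^* = \int_\Omega \rho_i$. Using $\rho_0 \le \rho_1$ and the $L^1$-contraction inequality,
\[
\int_\Omega (\rho_1^* - \rho_0^*)_+ \le \int_\Omega (\rho_1-\rho_0)_+ = \int_\Omega (\rho_1-\rho_0) = \int_\Omega (\rho_1^* - \rho_0^*),
\]
so $(\rho_1^* - \rho_0^*)_- = 0$ a.e.\ and $\rho_0^* \le \rho_1^*$ a.e.

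For part (2), the plan is to establish a lattice structure on the set of dual maximizers. The starting point is the elementary convex identity $s^*(p_0,x) + s^*(p_1,x) = s^*(\min(p_0,p_1),x) + s^*(\max(p_0,p_1),x)$ together with the key pointwise $c$-transform inequality
\[
p_0^c(x) + p_1^c(x) \le \min(p_0,p_1)^c(x) + \max(p_0,p_1)^c(x).
\]
I would prove the latter by selecting argmins $y_1, y_2$ realizing $\min(p_0,p_1)^c(x)$ and $\max(p_0,p_1)^c(x)$ and doing a four-way case analysis based on the signs of $p_0(y_j) - p_1(y_j)$; in every case, the bounds $p_i^c(x) \le p_i(y_j) + c(x,y_j)$ combined with the optimality inequalities for $y_1$ and $y_2$ deliver the desired sum. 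Combining this with the $s^*$-identity and the hypothesis $\rho_0 \le \rho_1$, a short rearrangement gives
\[
J^*(\min(p_0,p_1), \rho_0) + J^*(\max(p_0,p_1), \rho_1) \ge J^*(p_0, \rho_0) + J^*(p_1, \rho_1),
\]
so that if $p_0, p_1$ are dual maximizers for $\rho_0, \rho_1$ respectively, the pair $(\min(p_0,p_1), \max(p_0,p_1))$ consists of dual maximizers for $(\rho_0, \rho_1)$ as well.

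The existence of the extremal $c$-concave maximizers $p_i^\pm$ (Lemmas~\ref{lem:big_max_unique} and~\ref{lem:pressure_special}) rests on the fact that the operator $p \mapsto p^{c\bar c}$ is order-preserving, idempotent, leaves the $c$-transform invariant (since $p^{c\bar c c} = p^c$), and produces a smaller dual energy because $p^{c\bar c} \le p$ and $E^*$ is monotone; hence it sends any maximizer to a $c$-concave one lying below it. A Zorn/compactness argument, based on the uniform Lipschitz regularity of $c$-concave functions inherited from $c \in C^1_{\mathrm{loc}}$ together with the admissibility condition \eqref{eq:good_data0}, then extracts both $p_i^+$ and $p_i^-$. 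Given these, the monotonicity is immediate from the lattice property: $(\min(p_0^-, p_1^-))^{c\bar c}$ is a $c$-concave maximizer for $\rho_0$ satisfying $(\min(p_0^-, p_1^-))^{c\bar c} \le p_0^-$, so minimality of $p_0^-$ forces $p_0^- \le p_1^-$, and a symmetric argument applied to $(\max(p_0^+, p_1^+))^{c\bar c}$ gives $p_0^+ \le p_1^+$. I expect the main obstacle to be the pointwise $c$-transform inequality: the two "diagonal" cases (where both $y_1$ and $y_2$ select the same $p_i$ as min or as max) are not immediate by substitution but require chaining the optimality inequalities of $y_1$ and $y_2$, and constitute the most delicate step of the argument.
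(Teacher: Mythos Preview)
Your proof of part (1) is correct and matches the paper's one-line appeal to Theorem~\ref{thm:l1_contraction}.

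For part (2), your approach is correct but genuinely different from the paper's, and in several respects cleaner. The paper does \emph{not} use the submodularity inequality $p_0^c + p_1^c \le \min(p_0,p_1)^c + \max(p_0,p_1)^c$. Instead, it argues at the level of transport maps: to show that $\max(p_0^+,p_1^+)\in\Sigma(\rho_1)$ (Theorem~\ref{thm: direct comparison of rho and p}), it analyzes where $T_{p_0^+}$, $T_{p_1^+}$, and $T_{\max(p_0^+,p_1^+)}$ agree on the set $\{p_1^+<p_0^+\}$, using repeatedly the key observation of Lemma~\ref{lem:fundamental_property}. For the existence of $p^-$ (Lemma~\ref{lem:pressure_special}) and the comparison $p_0^-\le p_1^-$, the paper takes a different route again: it perturbs $s^*$ to a strictly convex $s^*_k = s^* + \tfrac{1}{k}\ln(1+e^p)$, so that the dual maximizer becomes unique, orders the resulting $p_k^*$ via the same transport-map lemma, and passes to the limit. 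Your lattice argument handles all four statements (existence of $p^\pm$, and both comparisons) by one mechanism, and stays entirely on the dual side without ever invoking $T_p$ or perturbation. What the paper's approach buys is a closer tie to the optimal-map structure that drives the $L^1$-contraction itself; what yours buys is symmetry and brevity.

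Two small remarks on your sketch. First, the ``diagonal'' cases of the $c$-transform inequality are in fact no harder than the others: if $\min(p_0,p_1)(y_1)=p_0(y_1)$ and $\max(p_0,p_1)(y_2)=p_0(y_2)$, then $\min^c(x)=p_0(y_1)+c(x,y_1)\ge p_0^c(x)$ directly, while the case hypothesis $p_0(y_2)\ge p_1(y_2)$ gives $\max^c(x)=p_0(y_2)+c(x,y_2)\ge p_1(y_2)+c(x,y_2)\ge p_1^c(x)$; no chaining between $y_1$ and $y_2$ is needed. Second, your existence argument for $p_i^-$ needs a little more than ``Zorn/compactness'': since the infimum of $c$-concave functions need not be $c$-concave, you should imitate the paper's construction in Lemma~\ref{lem:big_max_unique}, taking a countable dense set, forming $q_N=(\min_{k\le N} p_{N,k})^{c\bar c}\in\Sigma(\rho_i)$ via your lattice property, and using the uniform Lipschitz bound to pass to the limit.
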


The remainder of our main results concern the second part of our paper where we show that the discrete solutions obtain from the minimizing movement scheme converge to solutions of the problem $(P)$.
The contraction principle will play a crucial role in our arguments.

$s$ is now assumed to satisfy all the assumptions \ref{assumption: convexity}-\ref{assumption: inf sup ratio of partial s*}, and the cost is specialized to the quadratic cost $c(x,y):= \frac{1}{2\tau}|x-y|^2$, where $\tau$ plays the role of a time step. We shall suppose that the initial data $\rho_0$ satisfies a slightly stronger version of \eqref{eq:good_data0}, i.e.,
\begin{equation}\label{good_data_1}
\int_\Omega\rho_0\,dx>0,\quad \mbox{and}\quad\rho_0 \leq \partial_p s^*(M,\cdot)\mbox{ a.e.\;in } \Omega
\end{equation}
for some $M<+\infty$. In fact, this will imply that $\rho_0$ admits a pressure variable bounded from above, and that $\rho_0$ satisfies \eqref{eq:good_data0}.

Setting $\rho^{0,\tau} = \rho_0$, we construct approximate solutions to ($P$) by iterating:
\begin{equation}\label{eq:mms}
\rho^{n+1,\tau}:=\argmin_{\rho\in X}  J(\rho,\rho^{n,\tau}),
\end{equation}
and
\begin{equation}\label{eq:mms_dual}
p^{n+1,\tau}\in \argmax_{p\in X^*,\,p^{c\bar{c}} = p}  J^*(p,\rho^{n,\tau})
\end{equation}
where $p^{n+1,\tau}$ is chosen as the smallest $c$-concave maximizer given in Theorem \ref{thm: direct comparison of rho and p intro}.  Note that now the primal problem has the more familiar form \eqref{eq:classic_jko}.

Define $\rho^{\tau}:\Omega\times[0,\infty)\to [0,\infty)$ and  $p^{\tau}:\Omega\times[0,\infty)\to \RR$ as the piecewise constant interpolations on $[0,\infty)$ of the discrete solutions
\begin{equation}\label{interpolation}
\rho^{\tau}(x,t):=\rho^{n+1,\tau}(x),\quad p^{\tau}(x,t):=p^{n+1,\tau}(x) \quad\hbox{ for } n\tau \leq t < (n+1)\tau.
\end{equation}
Our goal is to show that as $\tau\to 0$, $(\rho^\tau,p^\tau)$ converges up to a subsequence to a weak solution of the problem $(P)$.

\medskip

The starting point of the analysis is to use the $L^1$-contraction to establish spatial equicontinuity of $\rho^\tau$ (see Proposition \ref{prop:spatial_equicontinuity}) which then yields the $L^1$-convergence of $\rho^\tau$ in space-time.
As for the compactness of $p^\tau$, the available energy dissipation inequality (see Lemma \ref{lem: edi}) only bounds the integral of the combined quantity $\rho^{\tau}|\nabla p^{\tau}|^2$, and thus additional ideas are needed to discuss the convergence of $p^{\tau}$ in view of potential degeneracy of $\rho^{\tau}$ (i.e.\;$\rho^\tau = 0$ in some region of the space-time domain).

First we show that, for strictly positive initial density $\rho_0$, $(\rho^{\tau}, p^{\tau})$ converges to the standard weak solution of $(P)$ where $\nabla p$ is obtained as an $L^2$-function in space-time. A crucial ingredient is Lemma~\ref{lem:lower_bound}, by which stationary solutions can serve as barriers to provide uniform bounds on $\rho^\tau$ and $p^\tau$. For a given $T>0$, we denote $\Omega_T:= \Omega\times [0,T]$. The convergences as $\tau\to 0$ are subsequential.

\begin{theorem}[Theorem \ref{thm:pde_existence1}]\label{thm:pde_existence01}
Suppose $\rho_0\in X$ satisfies \eqref{good_data_1}.
In addition, suppose $\rho_0(x)\geq \partial_p s^*(m,x)$ for some constant $m\in \RR$ such that $\partial_p s^*(m,x)$ is not identically zero.
Then for any $T>0$, there exist $\rho\in L^{\infty}\big(\Omega_T\big)$ and $p\in L^2\big([0,T];H^1(\Omega)\big)\cap L^{\infty}(\Omega_T) $ such that, up to a subsequence,
$\rho^{\tau} \to \rho \hbox{ in }L^1\big(\Omega_T\big)$ and $p^{\tau}\rightharpoonup p$  in $L^2([0,T]; H^1(\Omega))$.
Moreover, $(\rho,p)$ is a weak solution of $(P)$ in the sense that
$p(x,t)\in \partial_p s(\rho(x,t),x)$ a.e.\;in $\Omega_T$
and
$$
\int_0^{t_0}\int_{\Omega} \rho (x,t)\partial_t \phi(x,t)-\rho(x,t)\nabla p(x,t)\cdot \nabla \phi(x,t)\, dx\, dt=\int_{\Omega} \rho(x,t_0)\phi(x,t_0)-\rho_0(x)\phi(0,x)\,dx
$$
for any $\phi \in C^{\infty}(\Omega_T)$ and for a.e. $t_0\in [0,T]$.
\end{theorem}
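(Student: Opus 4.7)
The plan is to extract strong compactness of $\rho^\tau$ and weak $H^1$-compactness of $p^\tau$ from the three discrete tools already developed — the $L^1$-contraction of Theorem~\ref{thm:l1_contraction_intro}, the comparison principle of Theorem~\ref{thm: direct comparison of rho and p intro}, and the energy dissipation inequality of Lemma~\ref{lem: edi} — together with the barrier construction of Lemma~\ref{lem:lower_bound}, and then to pass to the limit in the discrete Euler-Lagrange equations. The role of the extra hypothesis $\rho_0 \geq \partial_p s^*(m,\cdot)$ with $\partial_p s^*(m,\cdot) \not\equiv 0$ is to place us in the non-degenerate regime in which $\rho^\tau$ stays uniformly bounded away from zero, so that the weighted dissipation $\int \rho^\tau|\nabla p^\tau|^2$ becomes a genuine $H^1$-estimate.

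First I would establish uniform $L^\infty$-bounds above and a strictly positive uniform bound below. The constant pressure $M$ paired with the density $\partial_p s^*(M,\cdot)$ is a stationary solution of the JKO step, so the hypothesis $\rho_0 \leq \partial_p s^*(M,\cdot)$ from \eqref{good_data_1} together with Theorem~\ref{thm: direct comparison of rho and p intro} yields $\rho^{n,\tau} \leq \partial_p s^*(M,\cdot)$ and $p^{n,\tau} \leq M$ for every $n$. Symmetrically, $\rho_0 \geq \partial_p s^*(m,\cdot)$ combined with assumption~\ref{assumption: inf sup ratio of partial s*} (which turns a nontrivial ess-sup into a positive ess-inf) produces a stationary subsolution bounded below by some $\delta > 0$, so the comparison principle propagates $\rho^{n,\tau} \geq \delta$ and $p^{n,\tau} \geq m$. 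Next, the $L^1$-contraction propagates the spatial modulus of continuity of $\rho_0$ across each JKO step, giving uniform spatial equicontinuity of $\rho^\tau$ (Proposition~\ref{prop:spatial_equicontinuity}), while the telescoped estimate $\sum_n W_2^2(\rho^{n+1,\tau},\rho^{n,\tau}) \leq 2\tau(E(\rho_0) - \inf E)$ controls Wasserstein displacement, which via the $L^\infty$-bound upgrades to $L^1$-equicontinuity in time. Fréchet-Kolmogorov then delivers a subsequence with $\rho^\tau \to \rho$ in $L^1(\Omega_T)$, and by interpolation in every $L^q(\Omega_T)$, $q<\infty$.

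For the pressure, the energy dissipation inequality gives
$$
\int_0^T \int_\Omega \rho^\tau |\nabla p^\tau|^2\, dx\, dt \leq 2(E(\rho_0) - \inf E),
$$
and the uniform lower bound $\rho^\tau \geq \delta$ converts this into $\|\nabla p^\tau\|_{L^2(\Omega_T)} \leq C$; combined with the $L^\infty$-bound on $p^\tau$, one extracts $p^\tau \rightharpoonup p$ in $L^2([0,T]; H^1(\Omega))$ and weakly-$\ast$ in $L^\infty$. The pointwise relation $p^{n+1,\tau} \in \partial_p s(\rho^{n+1,\tau},\cdot)$, equivalently $\rho^{n+1,\tau} = \partial_p s^*(p^{n+1,\tau},\cdot)$ by assumption~\ref{assumption: regularity of s*}, can then be passed to the limit by a standard Minty-type argument for the maximal monotone graph $\partial_p s(\cdot,x)$: test the monotonicity inequality $(p^\tau - q)(\rho^\tau - \partial_p s^*(q,\cdot)) \geq 0$ against nonnegative smooth functions, use strong $L^2$-convergence of $\rho^\tau$ and weak $L^2$-convergence of $p^\tau$, and let $q$ approach $p$ to deduce $p \in \partial_p s(\rho,\cdot)$ a.e.

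The weak form of the continuity equation comes from first-variation of the JKO step along a smooth vector field $\nabla\phi$, yielding the discrete identity
$$
\int_\Omega \frac{\rho^{n+1,\tau} - \rho^{n,\tau}}{\tau}\phi\, dx = -\int_\Omega \rho^{n+1,\tau} \nabla p^{n+1,\tau}\cdot\nabla\phi\, dx + O(\tau)\|\phi\|_{C^2},
$$
which upon summation in $n \leq n_0$ with $n_0\tau = t_0$, summation by parts on the left, and passage to the limit (strong $\rho^\tau$ times weak $\nabla p^\tau$) gives the desired weak formulation. The main obstacle I anticipate is the pressure compactness step: the dissipation controls only $\sqrt{\rho^\tau}\,\nabla p^\tau$, so the entire argument hinges on producing a genuinely uniform positive lower bound on $\rho^\tau$ via Lemma~\ref{lem:lower_bound} — an a.e.\ positive barrier would not suffice. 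This is precisely where the non-triviality of $\partial_p s^*(m,\cdot)$ and assumption~\ref{assumption: inf sup ratio of partial s*} are essential.
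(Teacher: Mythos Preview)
Your outline is correct and follows essentially the same route as the paper: barrier bounds from Lemma~\ref{lem:lower_bound} and the comparison principle give two-sided $L^\infty$ control on $\rho^\tau$ and $p^\tau$; the positive lower bound on $\rho^\tau$ converts the energy dissipation into a genuine $L^2([0,T];H^1)$ bound on $p^\tau$; strong compactness of $\rho^\tau$ comes from Proposition~\ref{prop:spatial_equicontinuity} plus weak time control; and passage to the limit in the discrete weak formulation (Lemma~\ref{continuity}) uses strong-$\rho^\tau$ against weak-$\nabla p^\tau$.

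One correction worth flagging: your sentence ``the $L^1$-contraction propagates the spatial modulus of continuity of $\rho_0$ across each JKO step'' misdescribes the mechanism. Because $s$ is spatially inhomogeneous, translating $\rho_0$ does not translate $\rho^{n,\tau}$, so the contraction cannot be applied to $\rho^\tau(\cdot+h)-\rho^\tau(\cdot)$ directly (cf.\ the remark following Theorem~\ref{thm:l1_contraction_intro}). The actual mechanism in Proposition~\ref{prop:spatial_equicontinuity} is: the pressure gradient bound gives $L^1$-equicontinuity of $p^\tau$ on the set where $\rho^\tau\geq\delta$, and this is transferred to $\rho^\tau$ through the relation $\rho^\tau=\partial_p s^*(p^\tau,x)$ via a mollification of $\partial_p s^*$. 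The $L^1$-contraction enters only to reduce general data to the bounded-below case; in the present theorem you are already there. Also, for the duality relation the paper uses the Fenchel equality $\rho^\tau p^\tau = s(\rho^\tau,x)+s^*(p^\tau,x)$ together with lower semicontinuity rather than the Minty trick you describe; the two arguments are equivalent here and yours would work just as well.
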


For general bounded initial data $\rho_0$ satisfying \eqref{good_data_1}, we obtain a weaker notion of continuum solutions of $(P)$ in the context of \eqref{pressure_pde}, similar to that of \cite{Carrillo}. This notion still admits uniqueness results on the $\rho$-variable for a wide class of $s^*$ (see section \ref{sec: uniqueness}).
Instead of directly taking the limit $\tau\to 0$ in the discrete solutions $(\rho^\tau,p^\tau)$,
our solution is obtained by an approximation argument using solutions starting from strictly positive initial data.
In this process, the $L^1$-contraction and the comparison principle play crucial roles.
See the details in section \ref{sec: continumm limit}.

\begin{theorem}[Theorem \ref{thm:pde_existence2}]\label{thm:pde_existence02}
Suppose $\rho_0\in X$ satisfies \eqref{good_data_1}. Then for any $T>0$  there exists $\rho\in L^{\infty}\big(\Omega_T\big)$ and a measurable $p$ with $p_+:=\max\{p,0\} \in L^{\infty}\big( \Omega_T\big)$, such that
$\rho^{\tau} \to \rho$ in $L^1(\Omega_T)$ along a subsequence, and
$\rho(x,t)=\partial_p s^*(p(x,t),x) \hbox{ a.e.\;in }\Omega_T$.
Moreover $(\rho,p)$ is a weak solution of \eqref{pressure_pde} in the sense that  for any $\phi\in C^{\infty}(\Omega_T)$ and for a.e. $t_0\in [0,T]$
\begin{equation*}
\int_0^{t_0}\int_{\Omega} \rho(x,t)\partial_t \phi(x,t)-m(x,t)\cdot \nabla \phi(x,t)\, dx\, dt=\int_{\Omega} \rho(x,t_0)\phi(x,t_0)-\rho_0(x)\phi(x,0)\,dx,
\end{equation*}
where $m(x,t) := \nabla [s^*(p(x,t),x)] - \partial_x s^*(p(x,t),x)\in L^2(\Omega_T)$.  Lastly, $\frac{m}{\rho} = \nabla p$ in the support of $\rho$, in the sense that
 $$
\int_{\Omega_T} \frac{m}{\rho} \cdot f = -\int_{\Omega_T} p \nabla\cdot f
$$
for any $d$-dimensional vector field $f\in L^2([0,T]; H^1(\Omega))$ with $\|\rho^{-1}f \|_{L^2(\Omega_T)}+\|\rho^{-1}\nabla\cdot f\|_{L^1(\Omega_T)}<+\infty$ and with zero normal component along $\partial\Omega\times [0,T]$.
\end{theorem}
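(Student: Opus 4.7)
The plan is to realize $(\rho,p)$ as a monotone limit of the strictly positive solutions produced by Theorem~\ref{thm:pde_existence01}, using the $L^1$-contraction (Theorem~\ref{thm:l1_contraction_intro}) for strong compactness of the density approximations and the comparison principle (Theorem~\ref{thm: direct comparison of rho and p intro}) for monotonicity and uniform upper bounds. Pick $m_k\to-\infty$ with $\partial_p s^*(m_k,\cdot)\not\equiv 0$ (possible by \ref{assumption: asymptotics of s*}) and $m_k<M$, and set $\rho_0^k:=\max\{\rho_0,\,\partial_p s^*(m_k,\cdot)\}$. Then $\rho_0^k\downarrow\rho_0$ in $L^1(\Omega)$, the upper bound $\rho_0^k\leq\partial_p s^*(M,\cdot)$ is preserved, and each $\rho_0^k$ satisfies the hypotheses of Theorem~\ref{thm:pde_existence01}; let $(\rho^k,p^k)$ denote the resulting weak solution.

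For uniform estimates, comparison with the stationary barrier $\partial_p s^*(M,\cdot)$ (Lemma~\ref{lem:lower_bound}) gives $0\leq\rho^k\leq\partial_p s^*(M,\cdot)$ and $p^k\leq M$ in $\Omega_T$ with bounds independent of $k$. Since $E(\rho_0^k)$ is uniformly bounded, the energy dissipation inequality yields $\int_{\Omega_T}\rho^k|\nabla p^k|^2\,dx\,dt\leq C$; combined with the $L^\infty$ bound on $\rho^k$, Cauchy--Schwarz produces $\|m^k\|_{L^2(\Omega_T)}\leq C'$ for the flux $m^k:=\rho^k\nabla p^k$. Propagating the discrete comparison principle to the continuum limit shows that $\rho^k$ and (using the minimal selection of $p^k$ afforded by Theorem~\ref{thm: direct comparison of rho and p intro}) $p^k$ are monotone decreasing in $k$, while the $L^1$-contraction gives
\[
\|\rho^k-\rho^{k'}\|_{L^1(\Omega_T)}\leq T\,\|\rho_0^k-\rho_0^{k'}\|_{L^1(\Omega)}\to 0.
\]
Hence $\rho^k\to\rho$ in $L^1(\Omega_T)$, $p^k\to p$ a.e.\ in $\Omega_T$ with $p$ possibly $-\infty$ on $\{\rho=0\}$, and along a further subsequence $m^k\rightharpoonup m$ weakly in $L^2(\Omega_T)$.

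Continuity of $\partial_p s^*(\cdot,x)$ (assumption \ref{assumption: regularity of s*}) passes the relation $\rho^k=\partial_p s^*(p^k,\cdot)$ to $\rho=\partial_p s^*(p,\cdot)$ a.e., and the uniform bound $p^k\leq M$ together with the integrability condition of \ref{assumption: regularity of s*} gives, by dominated convergence, $\partial_x s^*(p^k,\cdot)\to\partial_x s^*(p,\cdot)$ in $L^2(\Omega_T)$ and $s^*(p^k,\cdot)\to s^*(p,\cdot)$ in $L^1(\Omega_T)$. Since $m^k=\nabla[s^*(p^k,\cdot)]-\partial_x s^*(p^k,\cdot)$ as distributions, passing to the distributional limit identifies $m=\nabla[s^*(p,\cdot)]-\partial_x s^*(p,\cdot)$; the weak formulation of $(P)$ from Theorem~\ref{thm:pde_existence01} then passes to the limit using $\rho^k\to\rho$ in $L^1$ and $m^k\rightharpoonup m$ in $L^2$. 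For the final velocity identity, integration by parts for each $k$ gives $\int_{\Omega_T}(m^k/\rho^k)\cdot f=-\int_{\Omega_T}p^k\,\nabla\cdot f$; passing $k\to\infty$ (monotone/dominated convergence on the right using $p_+^k\leq M$ and the fact that $\nabla\cdot f$ is essentially supported in $\{\rho>0\}$ by the hypothesis $\|\rho^{-1}\nabla\cdot f\|_{L^1}<\infty$, and weak $L^2$ convergence of $m^k$ paired against $f/\rho^k\to f/\rho$ on the left) yields the desired identity.

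The main obstacle is precisely what blocks a direct $\tau\to 0$ argument for the claimed weak solution: with $\rho_0$ only bounded from above, the scheme controls only the degenerate quantity $\rho^\tau|\nabla p^\tau|^2$, and no compactness of $\nabla p^\tau$ per se is available in the region where $\rho=0$. The decisive new ingredient is the $L^1$-contraction, which provides strong $L^1$-compactness of the densities entirely independently of any lower bound on $\rho$, enabling the distributional reinterpretation of $\rho\nabla p$ as $\nabla[s^*(p,\cdot)]-\partial_x s^*(p,\cdot)$ to survive in the limit even on the set $\{p=-\infty\}$.
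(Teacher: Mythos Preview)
Your approach is essentially the paper's: approximate $\rho_0$ from above by strictly positive data, apply Theorem~\ref{thm:pde_existence01} to each approximant, use the discrete comparison principle to get monotonicity of $(\rho^k,p^k)$ and the discrete $L^1$-contraction to get strong $L^1$-convergence of $\rho^k$, then pass the weak formulation and the velocity identity to the limit. The only point you leave implicit is the statement $\rho^\tau\to\rho$ itself (you define $\rho$ as $\lim_k\rho^k$, not as a subsequential limit of $\rho^\tau$); this is easily recovered from $\|\rho^\tau-\rho^\tau_k\|_{L^1(\Omega_T)}\le T\|\rho_0-\rho_0^k\|_{L^1(\Omega)}$ together with $\rho^\tau_k\to\rho^k$ along a common diagonal subsequence, which is exactly how the paper links the two limits.
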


Finally we discuss uniqueness of weak solutions, extending the results of \cite{Carrillo} and \cite{vazquez} to the spatially inhomogeneous cases. To ensure uniqueness, we impose the structural condition \eqref{assumption_s}, which guarantees sufficient regularity of the drift term.    In particular, our weak solution formulation does not require the additional entropy condition considered in \cite{Carrillo} to obtain uniqueness.

\begin{theorem}[Theorem \ref{thm:uniqueness}]\label{thm:uniqueness}
Let $\rho_0$ be as given in Theorem~\ref{thm:pde_existence02}, and suppose $s$ either is of the form $s^*(p,x) = f(x)w(p)$ or satisfies \eqref{assumption_s}. Then the density variable $\rho$ in the weak solution of (P) constructed in Theorem \ref{thm:pde_existence02} is unique, and the entire sequence $\rho^{\tau}$ converges to $\rho$.
The $m$-variable in Theorem \ref{thm:pde_existence02} is also unique.

Moreover in this case the $L^1$-contraction carries over to the continuum solutions.
Namely, let $\rho_{0,i}$ $(i=1,2)$ be given as in Theorem~\ref{thm:pde_existence02}, and let $\rho_i$ be the corresponding density variables.
Then
$$
\|\rho_1(\cdot,t) - \rho_2(\cdot,t) \|_{L^1(\Omega)} \leq \|\rho_{0,1}- \rho_{0,2}\|_{L^1(\Omega)} \hbox{ for any } t>0.
$$
\end{theorem}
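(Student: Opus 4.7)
The plan is to prove uniqueness of $\rho$ by an Oleinik-type duality argument; once $\rho$ is known to be unique, every subsequential limit in Theorem \ref{thm:pde_existence02} must coincide, so the full sequence $\rho^{\tau}$ converges in $L^1(\Omega_T)$, and iterating the discrete contraction of Theorem \ref{thm:l1_contraction_intro} at every step and letting $\tau\to 0$ yields the continuum $L^1$-contraction. The flux $m = \nabla[s^*(p,\cdot)] - \partial_x s^*(p,\cdot)$ is pinned down from $\rho$ through $\rho = \partial_p s^*(p,\cdot)$ and the strict monotonicity of $\partial_p s^*$ on its effective range (the characterization $m/\rho=\nabla p$ on $\{\rho>0\}$ and $m=0$ on $\{\rho=0\}$ in Theorem \ref{thm:pde_existence02} then delivers uniqueness of $m$).

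Let $(\rho_1,p_1)$ and $(\rho_2,p_2)$ be two weak solutions with the same initial datum and set $u := \rho_1 - \rho_2$. Convexity of $s^*(\cdot,x)$ lets us linearize the flux difference: on $\{u\neq 0\}$ define
\[
a(x,t) := \frac{s^*(p_1,x) - s^*(p_2,x)}{\rho_1 - \rho_2} \geq 0, \qquad \vec{b}(x,t) := \frac{\partial_x s^*(p_1,x) - \partial_x s^*(p_2,x)}{\rho_1 - \rho_2},
\]
extended by zero elsewhere; by the $L^\infty$ bounds on $p$ and $\rho$, both $a$ and $\vec{b}$ lie in $L^\infty(\Omega_T)$. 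The decisive structural input is that the drift is dominated by the diffusion: in the multiplicative case $s^*(p,x) = f(x)w(p)$ a direct computation gives $\vec{b} = a\,\nabla\log f$, hence $|\vec{b}|^2 \leq C\,a$, and the same bound is guaranteed by assumption \eqref{assumption_s}. Subtracting the two weak formulations of $(P)$ and rewriting the flux difference through $a,\vec{b}$ yields the duality identity
\[
\int_\Omega u(\cdot,T)\,\phi(\cdot,T)\,dx = \int_0^T\!\!\int_\Omega u\,\bigl[\partial_t \phi + a\,\Delta \phi + \vec{b}\cdot \nabla \phi\bigr]\,dx\,dt,
\]
valid for any smooth $\phi$ with $\partial_\nu \phi = 0$ on $\partial\Omega\times[0,T]$ (the boundary contributions after integrating by parts vanish thanks to this Neumann condition on the test function, since the original weak form already incorporates the no-flux condition on $m$).

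Fix $\psi\in C^\infty(\overline\Omega)$ with $\|\psi\|_\infty \leq 1$ and, for $\varepsilon>0$, let $\phi_\varepsilon$ be the classical solution of the regularized backward adjoint problem
\[
\partial_t \phi_\varepsilon + (a+\varepsilon)\,\Delta \phi_\varepsilon + \vec{b}\cdot\nabla \phi_\varepsilon = 0 \;\;\hbox{in } \Omega_T, \quad \phi_\varepsilon(\cdot,T) = \psi, \quad \partial_\nu \phi_\varepsilon = 0 \;\;\hbox{on } \partial\Omega \times [0,T],
\]
obtained by first mollifying $a,\vec{b}$ to apply classical uniformly-parabolic theory and then passing to the limit using only $\|a\|_\infty$- and $\varepsilon$-ellipticity-dependent estimates. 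The maximum principle gives $\|\phi_\varepsilon\|_\infty \leq 1$. Testing the equation against $-\Delta\phi_\varepsilon$, integrating by parts using the Neumann condition, absorbing the drift via Young's inequality together with $|\vec{b}|^2 \leq C\,a \leq C(a+\varepsilon)$, and closing with Gronwall yields the crucial uniform-in-$\varepsilon$ bound
\[
\int_0^T\!\!\int_\Omega (a+\varepsilon)\,|\Delta \phi_\varepsilon|^2\,dx\,dt \leq C(\psi, T, \|a\|_\infty).
\]
Plugging $\phi_\varepsilon$ into the duality identity and using $a-(a+\varepsilon) = -\varepsilon$,
\[
\int_\Omega u(\cdot,T)\,\psi\,dx = -\varepsilon\int_0^T\!\!\int_\Omega u\,\Delta\phi_\varepsilon\,dx\,dt \leq \|u\|_{L^\infty(\Omega_T)}\sqrt{T|\Omega|}\,\sqrt{\varepsilon\, C(\psi)} \longrightarrow 0
\]
as $\varepsilon \to 0$. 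Taking $\psi$ to approximate the indicator of $\{u(\cdot,T)>0\}$ and swapping $\rho_1$ and $\rho_2$ gives $u(\cdot,T) = 0$ a.e.\,for a.e.\,$T > 0$, proving the uniqueness of $\rho$.

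The principal obstacle is securing the uniform-in-$\varepsilon$ energy bound $\int(a+\varepsilon)|\Delta\phi_\varepsilon|^2 \leq C$. Without the pointwise domination $|\vec{b}|^2 \leq C\,a$, the Gronwall absorption of the drift produces a constant blowing up as $\varepsilon \to 0$ and the argument collapses; this is precisely where the multiplicative form $s^*(p,x) = f(x)w(p)$ or the regularity assumption \eqref{assumption_s} enters. A secondary technical point is that $a,\vec{b}$ are only $L^\infty$, so $\phi_\varepsilon$ must be constructed through an inner mollification of the coefficients, with all estimates depending only on $\|a\|_\infty$, $\|\vec{b}\|_\infty$, and the $\varepsilon$-ellipticity, so that both the inner mollification limit and the outer $\varepsilon\to 0$ limit can be taken safely.
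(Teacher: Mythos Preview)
Your approach is the classical Oleinik/V\'azquez backward adjoint argument, whereas the paper takes a more direct route: it plugs the explicit test function $\phi(x)=s^*(p_1(x,t),x)-s^*(p_2(x,t),x)$ (which is in $H^1(\Omega)$ for a.e.\ $t$) into the \emph{time--integrated} difference of the two weak formulations, introduces $h(x,t):=\int_0^t(m_1-m_2)\,ds$, observes $\nabla\phi=\partial_t h+g$ with $g=\partial_x s^*(p_1,\cdot)-\partial_x s^*(p_2,\cdot)$, and then runs Gronwall on $\int_\Omega |h|^2$. The structural hypothesis \eqref{assumption_s} (or the multiplicative form, with $\phi=w(p_1)-w(p_2)$) is used to absorb $\int|g|^2$ into the nonnegative term $(\rho_1-\rho_2)\phi$. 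This gives $h\equiv 0$, hence $m_1=m_2$ and then $\rho_1=\rho_2$ simultaneously, with no adjoint problem to solve.

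Your argument has a genuine gap at the claim that $a,\vec{b}\in L^\infty(\Omega_T)$. In Theorem~\ref{thm:pde_existence02} only $p_+\in L^\infty$ is asserted, so ``$L^\infty$ bounds on $p$'' are unavailable. More seriously, even if $p_1,p_2$ were bounded, the ratio
\[
a=\frac{s^*(p_1,x)-s^*(p_2,x)}{\partial_p s^*(p_1,x)-\partial_p s^*(p_2,x)}
\]
is bounded only if $\partial_p s^*(\cdot,x)$ satisfies a lower Lipschitz (uniform convexity) bound, which is nowhere assumed in \ref{assumption: convexity}--\ref{assumption: inf sup ratio of partial s*}. Without $a\in L^\infty$ you cannot invoke uniformly parabolic theory for $\phi_\varepsilon$, and your estimates ``depending only on $\|a\|_\infty$'' are vacuous. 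The same issue bites in the multiplicative case: from $\vec{b}=a\,\nabla\log f$ you get $|\vec{b}|^2=a^2|\nabla\log f|^2$, which is $\leq Ca$ only if $a$ is bounded, so the key drift--diffusion domination fails there too. The paper's test--function approach sidesteps all of this because it never needs to divide by $\rho_1-\rho_2$.

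A second, smaller gap: your deduction of the uniqueness of $m$ from that of $\rho$ relies on ``strict monotonicity of $\partial_p s^*$'', which is not assumed (assumption \ref{assumption: regularity of s*} only gives continuity of $\partial_p s^*$, allowing flat pieces), so $p$ and hence $s^*(p,x)$ are not determined by $\rho$ alone. In the paper's argument the uniqueness of $m$ comes out first, directly from $h\equiv 0$, and $\rho_1=\rho_2$ is then read off from the difference of the weak formulations.
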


\subsection{Organization of the paper}

As mentioned above, the paper consists of two parts. Sections \ref{sec:background}-\ref{sec: L^1 contraction} establish important general principles 
of the minimizing movement scheme with general costs, whereas sections \ref{sec: mms and equicontinuity}-\ref{sec: uniqueness} turn to the quadratic cost, with focus on establishing convergence of the discrete solutions as $\tau\to 0$.

In section \ref{sec:background}, we recall basic properties of optimal transport and convex duality theory.
In section \ref{sec:dual}, we discuss the equivalence between the primal and the dual problems, as well as the existence and uniqueness of their solutions (Proposition \ref{prop:primal_dual}).
We prove the existence of the largest $c$-concave maximizing pressure in Lemma \ref{lem:big_max_unique}, and a maximum-principle-type result for the pressure in Proposition \ref{prop:maximum_principle}.
Section \ref{sec: L^1 contraction} features the $L^1$-contraction principle (Theorem \ref{thm:l1_contraction}) and the comparison principle (Theorem \ref{thm: direct comparison of rho and p}).
The existence of the smallest $c$-concave maximizing pressure is also proven in Lemma \ref{lem:pressure_special}.

Section \ref{sec: mms and equicontinuity} establishes strong compactness of the discrete density variable $\rho^{\tau}$ in $L^1(\Omega_T)$ (see Proposition \ref{prop:strong_compactness}).
The main ingredient needed for compactness is the $L^1$-equicontinuity of $\{\rho^\tau\}_\tau$ (Proposition~\ref{prop:spatial_equicontinuity}), which we obtain by combining the $L^1$-contraction principle with an approximation argument.
Section \ref{sec: continumm limit} justifies the convergence to the continuum solutions stated in Theorems \ref{thm:pde_existence01} and \ref{thm:pde_existence02}, making use of the strong compactness of $\{\rho^\tau\}_\tau$ in section \ref{sec: mms and equicontinuity}, uniform bounds coming from the comparison principle, the energy dissipation inequality, and the dual relation between $\rho^\tau$ and $p^\tau$.
Lastly, section \ref{sec: uniqueness} yields a uniqueness result for the $\rho$- and $m$-variables of the weak solutions obtained in Theorem \ref{thm:pde_existence02}, under additionally assumptions on $s^*(p,x)$.
This generalizes the uniqueness result of V\'{a}zquez \cite{vazquez}, which considered spatially homogeneous energy densities.

\subsection{Acknowledgement} I.K.\;thanks Felix Otto for helpful discussions, in particular motivating our investigation on the $L^1$-contraction for the discrete scheme. I.K.\;also thanks Katy Craig for helpful discussions on the geodesic convexity and pointing to the reference \cite{DiFM}.  The authors are grateful to Alp\'ar M\'esz\'aros for helpful comments.     M.J.\;is supported by ONR N00014-18-1-2527 and AFOSR MURI FA9550-18-1-0502. I.K.\;is supported by NSF grant DMS-1900804 and the Simons Fellowship.

\section{Preliminary Results}\label{sec:background}

\subsection{Properties of the optimal transport}
We first list some essential properties of optimal transport. Since we primarily work with optimal transport in its dual formulation, we shall work extensively with the $c$-transform.
Recall that we always assume that $c$ satisfies \ref{assumption: symmetry of c}-\ref{assumption: twist condition}.

\begin{definition} Given a function $p:\Omega\to\RR$ the {\it $c$-transform} of $p$ is given by
\begin{equation}\label{c-transform}
p^{c}(y)=\inf_{x\in\Omega} p(x)+c(x,y).
\end{equation}
Given a function $q:\Omega\to\RR$  the {\it  conjugate $c$-transform} is given by
\begin{equation*}
q^{\bar{c}}(x):=\sup_{y\in \Omega} q(y)-c(x,y).
\end{equation*}
\end{definition}
\begin{remark}
Note that there is no universally-agreed-upon choice of sign convention for the $c$-transform.  We choose the convention that leads to the simplest notation for our variational problems.
\end{remark}

\begin{lemma}[\cite{OTAM}]\label{lem:ccc}
Given functions $p, q:\Omega\to\RR$, we have
\[
p^{c\bar{c}}\leq p, \quad q\leq q^{\bar{c}c},
\]
and
\[
p^{c\bar{c}c}=p^{c},\quad q^{\bar{c}c\bar{c}}=q^{\bar{c}}.
\]
\end{lemma}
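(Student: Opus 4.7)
The plan is to derive all four statements from two elementary observations: (i) the inequalities $p^{c\bar{c}}\leq p$ and $q\leq q^{\bar{c}c}$ are immediate from unwinding the definitions, and (ii) both the $c$-transform and the $\bar{c}$-transform are monotone operators (order-preserving on pointwise-ordered functions). The two equalities then drop out by sandwiching.

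First I would establish $p^{c\bar{c}}\leq p$. For any fixed $x_0\in\Omega$, the defining infimum in \eqref{c-transform} gives $p^c(y)\leq p(x_0)+c(x_0,y)$ for every $y\in\Omega$, whence $p^c(y)-c(x_0,y)\leq p(x_0)$. Taking the supremum over $y$ on the left yields $p^{c\bar{c}}(x_0)\leq p(x_0)$. The inequality $q\leq q^{\bar{c}c}$ is entirely symmetric: for fixed $y_0$, $q^{\bar{c}}(x)\geq q(y_0)-c(x,y_0)$, so $q^{\bar{c}}(x)+c(x,y_0)\geq q(y_0)$, and taking the infimum in $x$ gives $q^{\bar{c}c}(y_0)\geq q(y_0)$.

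Next I would record the monotonicity step explicitly: if $f\leq g$ pointwise on $\Omega$, then $f^c\leq g^c$ (infimum of smaller quantities) and $f^{\bar{c}}\leq g^{\bar{c}}$ (supremum of smaller quantities). Granted this, the identity $p^{c\bar{c}c}=p^c$ follows from two applications. Monotonicity of the $c$-transform applied to $p^{c\bar{c}}\leq p$ gives $p^{c\bar{c}c}\leq p^c$, while the second direct inequality with the role of $q$ played by $p^c$ gives $p^c\leq (p^c)^{\bar{c}c}=p^{c\bar{c}c}$. The twin identity $q^{\bar{c}c\bar{c}}=q^{\bar{c}}$ is proved analogously, first using monotonicity of $\bar{c}$ applied to $q\leq q^{\bar{c}c}$, and then using $p^{c\bar{c}}\leq p$ with $p:=q^{\bar{c}}$.

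I do not foresee any obstacle here: the argument is a direct bookkeeping exercise in the two definitions and uses essentially no structure on $c$ beyond its being a real-valued function. In particular, none of the assumptions \ref{assumption: symmetry of c}--\ref{assumption: twist condition} are invoked in this lemma. I would expect the published version to simply cite \cite{OTAM} and either give a one-line reminder or skip the proof entirely.
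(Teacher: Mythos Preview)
Your proof is correct and is the standard argument; as you anticipated, the paper gives no proof at all and simply cites \cite{OTAM}.
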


\begin{definition}
We say that a function $p:\Omega\to \RR$ is {\it $c$-concave} if $p^{c\bar{c}}=p$, and we say a pair of functions $p, q:\Omega\to\RR$ are {\it $c$-conjugate} if $p^c=q$ and $q^{\bar{c}}=p$.
\end{definition}

The following regularity result is a well-known consequence of the $c$-transform definition.

\begin{lemma}[\cite{OTAM}]\label{lem:lip}
If $p$ is $c$-concave, then $p$ is Lipschitz and the Lipschitz constant depends only on $c$ and $\Omega$.
\end{lemma}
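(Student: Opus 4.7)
The plan is to use the representation of a $c$-concave function as an upper envelope and reduce the Lipschitz estimate to a uniform bound on $\nabla_x c$ over the compact set $\bar\Omega\times\bar\Omega$.

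First I would unpack the definition. Since $p$ is $c$-concave, $p = p^{c\bar c}$, which means
\[
p(x) = \sup_{y\in\Omega}\bigl(p^c(y) - c(x,y)\bigr),
\]
so $p$ is the pointwise supremum of the family of functions $\{f_y\}_{y\in\Omega}$ given by $f_y(x) := p^c(y) - c(x,y)$. Each $f_y$ differs from $-c(\cdot,y)$ by an additive constant, so its Lipschitz constant on $\Omega$ equals that of $c(\cdot,y)$.

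Next I would obtain a uniform Lipschitz bound on the family. By assumption \ref{assumption: regularity of c}, $c\in C^1_{loc}(\RR^d\times\RR^d)$, and $\Omega$ is a bounded smooth domain, so $\bar\Omega\times\bar\Omega$ is compact. Hence
\[
L := \sup_{(x,y)\in\bar\Omega\times\bar\Omega} \abs{\nabla_x c(x,y)} < \infty,
\]
and this constant depends only on $c$ and $\Omega$. Consequently each $f_y$ is $L$-Lipschitz on $\Omega$.

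The final step is the standard fact that a pointwise supremum of $L$-Lipschitz functions is $L$-Lipschitz wherever it is finite; this just uses $f_y(x) \le f_y(x') + L\abs{x-x'}$ and taking the supremum in $y$ on both sides. A brief sanity check that $p$ is nowhere $\pm\infty$ on $\Omega$ is warranted: by $c$-concavity $p$ cannot be identically $-\infty$ (otherwise $p^{c\bar c}\equiv-\infty$ would force $p^c\equiv-\infty$ and then $p\equiv+\infty$, contradicting $p^{c\bar c}=p$); and since each $f_y$ is finite at any point where $p^c(y)$ is finite, the supremum above is locally bounded, so the Lipschitz estimate $\abs{p(x)-p(x')}\le L\abs{x-x'}$ holds throughout $\Omega$. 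There is no real obstacle here — the content is really just ``suprema of equi-Lipschitz families are Lipschitz'' plus the compactness of $\bar\Omega\times\bar\Omega$ applied to $\nabla_x c$.
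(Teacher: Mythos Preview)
Your argument is correct and is exactly the standard proof: write $p=p^{c\bar c}$ as a supremum of translates of $-c(\cdot,y)$, bound $\abs{\nabla_x c}$ uniformly on the compact set $\bar\Omega\times\bar\Omega$ using \ref{assumption: regularity of c}, and conclude by the supremum-of-equi-Lipschitz-functions lemma. The paper does not actually supply its own proof of this lemma---it is simply quoted from \cite{OTAM}---and what you wrote is precisely the argument one finds there, so there is nothing to compare.
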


The following two lemmas establish the fundamental relationship between optimal transport and the $c$-transform.
\begin{lemma}[\cite{OTAM}]\label{lem:c_duality}
If $\mu$ is a nonnegative measure, then for any bounded function $p:\Omega\to \RR$,
\[
\inf_{\rho\in L^1(\Omega), \rho(\Omega)=\mu(\Omega)}\, \int_{\Omega}p(x)\rho(x)\, dx+C(\rho, \mu)=\int_{\Omega} p^c(y)\, d\mu(y).
\]
\end{lemma}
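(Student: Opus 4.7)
The plan is to reduce the statement to a Kantorovich-style computation by merging the outer infimum over $\rho$ with the transport-cost infimum into a single infimum over couplings with prescribed second marginal; the resulting problem disintegrates in $y$, and its pointwise minimization in $x$ is exactly the $c$-transform.

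First I would rewrite: for any admissible $\rho$ and any $\pi\in\Pi(\rho,\mu)$, one has $\int_{\Omega}p\rho\,dx=\int_{\Omega\times\Omega}p(x)\,d\pi(x,y)$ since $\rho$ is the first marginal of $\pi$. Combined with the definition of $C(\rho,\mu)$, this gives
\[
\int_{\Omega}p\rho\,dx+C(\rho,\mu)=\inf_{\pi\in\Pi(\rho,\mu)}\int_{\Omega\times\Omega}\bigl(p(x)+c(x,y)\bigr)\,d\pi(x,y),
\]
and exchanging infima shows that the left-hand side equals $\inf\bigl\{\int(p+c)\,d\pi:\pi\ge 0,\ \pi_{2}=\mu,\ \pi_{1}\in L^{1}(\Omega)\bigr\}$. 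The pointwise inequality $p(x)+c(x,y)\ge p^{c}(y)$ and $\pi_{2}=\mu$ then give the lower bound $\text{LHS}\ge\int p^{c}\,d\mu$ immediately.

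For the matching upper bound I would construct near-optimal couplings by Borel selection. Using that $\Omega$ is compact, $c$ is continuous by \ref{assumption: regularity of c}, and $p$ is bounded, for each $\varepsilon>0$ the set-valued map $y\mapsto\{x\in\Omega:p(x)+c(x,y)\leq p^{c}(y)+\varepsilon\}$ is nonempty-valued with measurable graph, so the Kuratowski--Ryll-Nardzewski selection theorem provides a Borel $T_{\varepsilon}:\Omega\to\Omega$ realizing this approximate infimum. The coupling $\pi_{\varepsilon}:=(T_{\varepsilon},\mathrm{id})_{\sharp}\mu$ has second marginal $\mu$ and total cost $\int(p(T_\varepsilon(y))+c(T_\varepsilon(y),y))\,d\mu(y)\le\int p^{c}\,d\mu+\varepsilon\mu(\Omega)$.

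I expect the main obstacle to be enforcing the $L^{1}(\Omega)$ constraint on the first marginal, since $(T_{\varepsilon})_{\sharp}\mu$ is typically singular. I would resolve this by convolving $\pi_{\varepsilon}$ in the $x$-variable with a mollifier $\eta_{\delta}$ of scale $\delta>0$, producing a coupling $\pi_{\varepsilon,\delta}$ whose first marginal $\rho_{\varepsilon,\delta}=\eta_{\delta}\ast(T_{\varepsilon})_{\sharp}\mu$ is absolutely continuous. Uniform continuity of $c$ on a compact neighbourhood of $\Omega$ shows the transport-cost term changes by $o_{\delta\to 0}(1)$; to control the $\int p\,\rho_{\varepsilon,\delta}$ term, one approximates the bounded measurable $p$ by a continuous function in $L^{1}\bigl((T_\varepsilon)_\sharp\mu\bigr)$-norm via Lusin's theorem, after which mollification passes the integral to the limit. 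Sending $\delta\to 0$ and then $\varepsilon\to 0$ yields $\text{LHS}\le\int p^{c}\,d\mu$ and closes the argument.
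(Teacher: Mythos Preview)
The paper does not supply a proof; the lemma is quoted from \cite{OTAM}. So there is no paper argument to compare against, and the question is simply whether your proof is correct.

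The lower bound is fine. The upper bound has a genuine gap in the mollification step. You write that Lusin's theorem lets you approximate $p$ by a continuous $\tilde p$ in $L^1\bigl((T_\varepsilon)_\sharp\mu\bigr)$, ``after which mollification passes the integral to the limit.'' But what you actually need to control is $\int p\,\rho_{\varepsilon,\delta}=\int(\eta_\delta*p)\,d(T_\varepsilon)_\sharp\mu$, and the problematic piece is $\bigl\|\eta_\delta*(p-\tilde p)\bigr\|_{L^1((T_\varepsilon)_\sharp\mu)}$. Lusin gives no handle on this: the set $\{p\neq\tilde p\}$ has small $(T_\varepsilon)_\sharp\mu$-measure but possibly large Lebesgue measure, and convolution averages over Lebesgue balls. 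The only general bound available is $\|p-\tilde p\|_{L^\infty}\cdot\mu(\Omega)$, which is not small.

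In fact the gap is unfixable at the stated generality, because the identity is \emph{false} for a merely bounded measurable $p$. On $\Omega=[0,1]$ with $c(x,y)=\tfrac12|x-y|^2$, $\mu$ equal to Lebesgue measure, and $p(x)=1$ for $x\neq 0$, $p(0)=0$, one computes $p^c(y)=\tfrac12 y^2$, so the right-hand side equals $1/6$; but $p=1$ Lebesgue-a.e., hence $\int p\,\rho=1$ for every admissible $\rho\in L^1$ of unit mass, and the left-hand side is at least $1$. The lemma therefore needs a regularity hypothesis on $p$ (and in the paper it is only ever invoked for $c$-concave, hence Lipschitz, pressures). For continuous $p$ your construction works cleanly without Lusin: $\eta_\delta*p\to p$ uniformly on $\bar\Omega$, and the rest of your argument goes through.
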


\begin{lemma}[\cite{gangbo_polar, gangbo_habilitation, gangbo_mccann}] \label{lem:c_transform_variation}
If $p:\Omega\to\RR$ is $c$-concave, $\mu$ is a nonnegative measure, and $\phi:\Omega\to\RR$ is a continuous function, then
\[
\lim_{t\to 0^+} \int_{\Omega} \frac{(p+t\phi)^c(y)-p^c(y)}{t}\,d\mu(y)=\int_{\Omega} \phi(T_{p}(y))\,d\mu(y)
\]
where $T_p:\Omega\to\Omega$ is the almost everywhere unique solution to
\begin{equation}\label{eq:forward_map}
\nabla p(T_p(y))+\nabla_x c(T_p(y),y)=0.
\end{equation}
Furthermore, $T_{p}$ is invertible for almost every $y\in\Omega$, and $T_p^{-1}$ is the almost everywhere unique solution to
\begin{equation}\label{eq:backward_map}
\nabla p(x)+\nabla_x c(x,T_{p}^{-1}(x))=0.
\end{equation}
\end{lemma}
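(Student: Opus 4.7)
The plan is to establish the variation formula pointwise in $y$ via an envelope-type argument, identify the map $T_p$ and its inverse using first-order optimality together with the twist condition, and then upgrade the pointwise statement to the stated integral identity by dominated convergence.

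\textbf{Step 1 (pointwise envelope).} Fix $y\in\Omega$ and assume for the moment that the infimum defining $p^c(y)$ is attained at a unique point $x_y$; set $T_p(y):=x_y$. Using $x_y$ as a competitor in $(p+t\phi)^c(y)$ gives the upper bound $(p+t\phi)^c(y)-p^c(y)\leq t\phi(x_y)$. For the matching lower bound, let $x_t$ be a near-minimizer of $(p+t\phi)^c(y)$; then $(p+t\phi)^c(y)\geq p(x_t)+c(x_t,y)+t\phi(x_t)-o(t)\geq p^c(y)+t\phi(x_t)-o(t)$. A compactness argument on $\bar{\Omega}$ combined with the assumed uniqueness forces $x_t\to x_y$ as $t\to 0^+$, and continuity of $\phi$ closes the lower bound, so $t^{-1}[(p+t\phi)^c(y)-p^c(y)]\to \phi(T_p(y))$.

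\textbf{Step 2 (identification via the twist condition).} Here the $c$-concavity of $p$ and \ref{assumption: twist condition} work together. By Lemma \ref{lem:lip}, $p$ is Lipschitz, hence Lebesgue-a.e.\;differentiable by Rademacher. At any point of differentiability $x$ that realizes the minimum in $p(\cdot)+c(\cdot,y)$, first-order optimality gives $\nabla p(x)+\nabla_x c(x,y)=0$, which by injectivity of $y\mapsto \nabla_x c(x,y)$ determines $y$ uniquely from $x$; this is precisely \eqref{eq:backward_map} defining $T_p^{-1}(x)$ a.e. The symmetric argument applied to $p^c$ (which is also $c$-concave by Lemma \ref{lem:ccc}) yields \eqref{eq:forward_map} and furnishes the a.e.\;unique minimizer $T_p(y)$ needed in Step 1, for $y$ in a set of full Lebesgue measure (absorbing $\mu$-null sets into the exceptional set as appropriate).

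\textbf{Step 3 (passage to the integral).} The functions $(p+t\phi)^c$ are themselves $c$-concave, so Lemma \ref{lem:lip} supplies a Lipschitz bound uniform in $|t|$ small; consequently the difference quotients $t^{-1}[(p+t\phi)^c-p^c]$ are bounded uniformly by a constant depending only on $c$, $\Omega$, and $\norm{\phi}_{\infty}$. Dominated convergence against the finite measure $\mu$ then upgrades the pointwise limit of Step 1 to the claimed integral identity.

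The main obstacle is Step 2: making the envelope calculation legitimate requires uniqueness of the minimizer $T_p(y)$ off a $\mu$-negligible set, and the a.e.\;coincidence between the selections implicitly made in the upper and lower bounds of Step 1. This is exactly where the interplay of $c$-concavity (to access Rademacher differentiability of $p$) with the twist condition (to turn the first-order optimality equation into an a.e.\;bijection between $x$ and $y$) is essential, and as a byproduct it delivers the invertibility of $T_p$ through the symmetry between \eqref{eq:forward_map} and \eqref{eq:backward_map}.
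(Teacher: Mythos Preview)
The paper does not prove this lemma; it is quoted from \cite{gangbo_polar, gangbo_habilitation, gangbo_mccann}. Your outline---pointwise envelope bound, identification of $T_p$ via Rademacher differentiability plus the twist condition, then dominated convergence---is exactly the standard argument in those references and is correct.

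Two minor remarks. In Step 3 the uniform bound is more elementary than you suggest: comparing competitors directly gives $|(p+t\phi)^c(y)-p^c(y)|\leq t\norm{\phi}_{L^\infty(\Omega)}$ for every $y$, so Lemma \ref{lem:lip} is not needed there. More substantively, your Step 2 furnishes uniqueness of the minimizer $T_p(y)$ only off a \emph{Lebesgue}-null set (via Rademacher applied to $p^c$), so the pointwise limit of Step 1 is only guaranteed Lebesgue-a.e.; to pass under the integral against $\mu$ you need $\mu$ not to charge that set. Your parenthetical about ``absorbing $\mu$-null sets into the exceptional set as appropriate'' does not actually bridge this gap. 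In practice the issue is harmless---every invocation of this lemma in the paper has $\mu=\bar{\rho}\in L^1(\Omega)$, and the cited sources make the corresponding absolute-continuity assumption---but the lemma as stated here leaves it implicit, and so does your proof.
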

\begin{remark}
The maps $T_p$ and $T_p^{-1}$ can additionally be characterized as the unique solutions to the optimization problems:
\begin{equation}\label{eq:map_argmin}
T_p(y)=\argmin_{x\in\Omega} p(x)+c(x,y),\quad T_p^{-1}(x)=\argmax_{y\in\Omega} p^c(y)-c(x,y).
\end{equation}
\end{remark}
\begin{remark}
If $c(x,y)=\frac{1}{2\tau}|x-y|^2$, the maps $T_p$ and $T_p^{-1}$ are given by
\begin{equation}\label{eq:quadratic_p_map}
T_p(y)=y-\tau\nabla p^c(y),\quad T_p^{-1}(x)=x+\tau\nabla p(x).
\end{equation}
\end{remark}

Now we can finally state the fundamental result that guarantees the existence and uniqueness of the optimal transport maps.
\begin{theorem}[\cite{brenier_polar,gangbo_habilitation,gangbo_mccann}] \label{thm:fund_ot}
If $\mu, \nu\in L^1(\Omega)$ are nonnegative densities with the same mass, then there exists a $c$-concave function $p^*:\Omega\to\RR$ such that
\[
p^*\in\argmax_p \int_{\Omega} p^c(y) \mu(y)\, dy-\int_{\Omega} p(x)\nu(x)\, dx,
\]
\[
C(\mu,\nu)=\int_{\Omega} (p^*)^c(y) \mu(y)\, dy-\int_{\Omega} p^*(x)\nu(x)\, dx,
\]
$T_{p^*}$ is the unique optimal map (up to a $\mu$-measure-zero set) transporting $\mu$ to $\nu$, and $T_{p^*}^{-1}$
is the unique optimal map (up to a $\nu$-measure-zero set) transporting $\nu$ to $\mu$.

Conversely, if $\tilde{p}$ is a $c$-concave function such that $T_{\tilde{p}\,\#}\mu=\nu$, then $T_{\tilde{p}}$ is the unique optimal map (up to a $\mu$-measure-zero set) transporting $\mu$ to $\nu$  and $T_{\tilde{p}}^{-1}$ is the unique optimal map (up to a $\nu$-measure-zero set) transporting $\nu$ to $\mu$.
\end{theorem}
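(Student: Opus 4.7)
The plan is to establish Kantorovich duality, extract the transport map from a $c$-concave maximizer $p^*$ via the first-variation formula of Lemma~\ref{lem:c_transform_variation}, and obtain uniqueness from the twist condition \ref{assumption: twist condition}. All of the analytic ingredients are already collected in the lemmas preceding the theorem; the task is to combine them in the right order.

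First I would prove existence of a $c$-concave maximizer $p^*$. Since $c$ is continuous and bounded on $\bar\Omega\times\bar\Omega$ and $\Pi(\mu,\nu)\neq\emptyset$, the classical Kantorovich duality (via, e.g., the Fenchel--Rockafellar theorem) gives strong duality
\[
\inf_{\pi\in\Pi(\mu,\nu)}\int c\,d\pi=\sup_{p}\left(\int p^c\,d\mu-\int p\,d\nu\right).
\]
By Lemma~\ref{lem:ccc}, replacing any candidate $p$ by $p^{c\bar c}\leq p$ preserves $p^c$ (since $p^{c\bar c c}=p^c$) while decreasing $\int p\,d\nu$, so we may restrict the supremum to $c$-concave $p$. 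Normalizing by a constant (harmless because $\mu,\nu$ have equal mass), Lemma~\ref{lem:lip} produces a uniformly equi-Lipschitz, equibounded family; Arzel\`a--Ascoli extracts a uniformly convergent maximizing sequence, and the continuity of $p\mapsto p^c$ under uniform convergence passes to the limit, yielding a $c$-concave maximizer $p^*$.

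Next I extract the optimal map. For any $\phi\in C(\bar\Omega)$, Lemma~\ref{lem:c_transform_variation} gives
\[
\lim_{t\to 0^+}\frac{1}{t}\int_{\Omega}\big[(p^*+t\phi)^c-(p^*)^c\big]\,d\mu=\int_{\Omega}\phi(T_{p^*}(y))\,d\mu(y),
\]
so the one-sided derivative of the dual objective at $p^*$ in direction $\phi$ equals $\int \phi(T_{p^*}(y))\,d\mu(y)-\int \phi\,d\nu$. Since $p^*$ is a maximizer over \emph{all} $p$, this derivative is $\leq 0$; applying the same bound to $-\phi$ gives equality for every $\phi\in C(\bar\Omega)$, so $T_{p^*\,\#}\mu=\nu$. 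The characterization \eqref{eq:map_argmin} gives $p^*(T_{p^*}(y))+c(T_{p^*}(y),y)=(p^*)^c(y)$ for $\mu$-a.e.\;$y$, so pushing forward produces
\[
\int_\Omega (p^*)^c\,d\mu-\int_\Omega p^*\,d\nu=\int_\Omega c\big(T_{p^*}(y),y\big)\,d\mu(y)=\int c\,d\pi^*,
\]
with $\pi^*:=(\mathrm{id},T_{p^*})_\#\mu\in\Pi(\mu,\nu)$. By strong duality both sides equal $C(\mu,\nu)$, establishing the cost identity and primal optimality of $\pi^*$.

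For uniqueness, any optimal $\pi\in\Pi(\mu,\nu)$ must satisfy $p^*(x)+c(x,y)=(p^*)^c(y)$ on its support, which (wherever $p^*$ is differentiable, a $\mu$-full set by Rademacher since $\mu\in L^1$) gives $\nabla p^*(x)+\nabla_x c(x,y)=0$; the twist condition \ref{assumption: twist condition} uniquely determines $y$ from $x$, forcing $\pi$ to concentrate on the graph of $T_{p^*}$, so $\pi=\pi^*$. An analogous argument in the $\nu$-variable, using the symmetry \ref{assumption: symmetry of c}, yields uniqueness of $T_{p^*}^{-1}$ as an optimal map from $\nu$ to $\mu$. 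For the converse, if $\tilde p$ is $c$-concave with $T_{\tilde p\,\#}\mu=\nu$, the argmin property of $T_{\tilde p}$ yields
\[
\int \tilde p^c\,d\mu-\int \tilde p\,d\nu=\int c\big(T_{\tilde p}(y),y\big)\,d\mu=\int c\,d\tilde\pi,\quad \tilde\pi:=(\mathrm{id},T_{\tilde p})_\#\mu\in\Pi(\mu,\nu),
\]
so strong duality sandwiches both sides between the dual supremum and the primal infimum, forcing $\tilde p$ to be a dual maximizer and $\tilde\pi$ optimal; the previous uniqueness step then reapplies. The main subtlety is the first-variation step: Lemma~\ref{lem:c_transform_variation} only gives a one-sided derivative, but it applies to an arbitrary continuous $\phi$, so both signs can be tested against the optimality of $p^*$ to produce $T_{p^*\,\#}\mu=\nu$; combined with the twist condition, which converts the optimality relation into a pointwise equation uniquely solvable for $y$, this is the essential mechanism behind both the existence and the uniqueness of the optimal map.
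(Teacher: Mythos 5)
The paper does not prove Theorem~\ref{thm:fund_ot}; it is cited as a classical result of Brenier, Gangbo, and Gangbo--McCann, so there is no in-paper proof to compare against. Your reconstruction is a correct and essentially standard argument: Kantorovich strong duality (Fenchel--Rockafellar), reduction to $c$-concave competitors via Lemma~\ref{lem:ccc}, compactness of normalized $c$-concave functions from Lemma~\ref{lem:lip} and Arzel\`a--Ascoli, the first-variation identity of Lemma~\ref{lem:c_transform_variation} to deduce $T_{p^*\,\#}\mu=\nu$ and close the duality gap, and the twist condition \ref{assumption: twist condition} to pin the support of any optimal plan to a graph. The logic is sound; the only things I would flag are cosmetic. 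In the uniqueness step, differentiating $p^*(x)+c(x,y)=(p^*)^c(y)$ in the variable carrying $p^*$ and invoking the twist condition shows that the optimal plan concentrates on the graph of $T_{p^*}^{-1}$ over that variable (equivalently, on the graph of $T_{p^*}$ over the other marginal); you state this as the graph of $T_{p^*}$ directly, which is the same set after swapping coordinates, but worth saying precisely since you also need to conclude $\nu$-a.e.\;uniqueness of $T_{p^*}^{-1}$. Relatedly, the ``$\mu$-full set by Rademacher'' remark should refer to the marginal carrying the variable in which you differentiate $p^*$ (here the $\nu$-marginal), not $\mu$; the conclusion is unchanged since both densities are in $L^1(\Omega)$ and hence absolutely continuous with respect to Lebesgue. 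Apart from these label swaps the proposal is complete, and the converse is handled correctly by producing a feasible plan whose cost equals a dual value, forcing both to be optimal.
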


\subsection{Properties of the convex duals}
Next we review several useful properties of the convex duals that will be used throughout the paper.

\begin{lemma}\label{lem:dual_relation}
For any proper, lower semi-continuous, convex function $h:\RR\to\RR\cup\{+\infty\}$, we have $p\in \partial h(y)$ if and only if $py=h(y)+h^*(p)$.
Moreover, $p\in \partial h(y)$ if and only if $y\in \partial h^*(p)$.

\begin{proof}
First suppose $p\in \partial h(y)$.
This implies that for any $z\in\RR$, $h(z)\geq h(y)+p(z-y)$.
Hence, $py\geq h(y)+pz-h(z)$ for any $z\in\RR$.
Taking the supremum over $z\in \R$, we get $py\geq h(y)+h^*(p)$.
The opposite direction $py\leq h(y)+h^*(p)$ immediately follows from Young's inequality.

On the other hand, suppose that $py=h(y)+h^*(p)$. Then
\[
h(y)=py+\inf_z h(z)-pz\leq h(y')+p(y-y')
\]
for any $y'\in \RR$.  Thus $p\in\partial h(y)$.

The second claim immediately follows from the first one if one notices that $h^*$ is a proper, lower semi-continuous, convex function on $\RR$ with value in $\RR\cup\{+\infty\}$.
\end{proof}
\end{lemma}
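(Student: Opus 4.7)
The plan is to reduce both claims to the Fenchel--Young inequality and a symmetric application to the biconjugate. The first equivalence is really a restatement of when Young's inequality is saturated, and the second ``moreover'' is the first equivalence applied with the roles of $h$ and $h^*$ swapped, using that $h^{**}=h$ for proper lsc convex $h$.

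For the forward direction of the first equivalence, I would start from the subdifferential inequality $h(z)\geq h(y)+p(z-y)$ for all $z\in\RR$, rearrange it as $pz-h(z)\leq py-h(y)$, and take the supremum in $z$ to obtain $h^*(p)\leq py-h(y)$. The reverse inequality $h^*(p)\geq py-h(y)$ is immediate from the very definition of $h^*$, so equality holds. Conversely, assuming $py=h(y)+h^*(p)$, I would use $h^*(p)\geq pz-h(z)$ for an arbitrary $z$ to get $py-h(y)\geq pz-h(z)$, which rearranges to the subdifferential inequality $h(z)\geq h(y)+p(z-y)$. Each of these steps is a one-line manipulation; no serious obstacle is expected.

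For the ``moreover'' statement, I would observe that $h^*:\RR\to\RR\cup\{+\infty\}$ is itself a proper, lower semi-continuous, convex function (this is a standard fact one can cite), and $h^{**}=h$ by Fenchel--Moreau. Applying the already-established first equivalence to $h^*$ in place of $h$ yields $y\in\partial h^*(p)\iff py=h^*(p)+h^{**}(y)=h^*(p)+h(y)$, which is the same Young equality we obtained from $p\in\partial h(y)$. Chaining the two equivalences gives $p\in\partial h(y)\iff y\in\partial h^*(p)$.

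The only point that requires any care is ensuring $h^*$ is proper, lsc, and convex so that Fenchel--Moreau and the first part apply to it; convexity and lower semi-continuity follow because $h^*$ is a supremum of affine functions, and properness follows from $h$ being proper and lsc with a finite affine minorant. With that in hand, both equivalences drop out without further calculation.
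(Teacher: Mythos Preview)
Your proposal is correct and follows essentially the same approach as the paper: both derive the Young equality from the subdifferential inequality by taking a supremum and invoking Young's inequality for the reverse, and both obtain the second equivalence by applying the first to $h^*$ (you make the use of $h^{**}=h$ explicit, while the paper leaves it implicit). There is no meaningful difference in strategy.
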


\begin{lemma}\label{lem:s_star_increasing}
Suppose $h:\RR\to\RR\cup\{+\infty\}$ is a proper, lower semi-continuous, convex function, satisfying $h(x) = +\infty$ if $x<0$ and $h(0)=0$.
Then $h^*$ is nonnegative, increasing, and in fact strictly increasing on $\partial h((0,\infty))$.
Moreover, $\sup \partial h^*(p) \to 0$ as $p\to-\infty$; here the supremum is taken over the elements in the subdifferential $\partial h^*(p)$.

\begin{remark}\label{rmk: s star increasing}
By the assumption on $s(\cdot,x)$, we immediately know that for all $x\in \Omega$, $s^*(\cdot,x)$ is nonnegative, increasing, and strictly increasing on $\partial s((0,\infty),x)$.
\end{remark}
\begin{proof}
By the assumption on $h$,
\[
h^*(p) = \sup_{z\geq 0}\;pz-h(z).
\]
Hence,
\[
h^*(p)\geq -h(0)=0.
\]
If $p_1\leq p_2$, then $p_1z\leq p_2z$ for all $z\geq 0$, so $h^*(p_1)\leq h^*(p_2)$.
If $p_1\in \partial h\big(y_0)$ for some $y_0>0$ and $p_2>p_1$, then
\[
h^*(p_2)=\sup_{y\geq 0}\, p_2y-h(y)\geq p_2y_0-h(y_0)>p_1y_0-h(y_0)=h^*(p_1).
\]
We used Lemma \ref{lem:dual_relation} in the last equality.

Lastly, suppose that $\sup \partial h^*(p) \to 0$ as $p\to-\infty$ does not hold. Since $\sup\partial h^*(p)$ is non-decreasing in $p$ and non-negative, we must have that $\sup \partial h^*(p) \geq c$ for all $p\in \mathbb{R}$ with some $c>0$.
Hence, for any $p\leq p'$,
\[
h^*(p')\geq h^*(p)+(p'-p)\cdot\sup \partial h^*(p)\geq c(p'-p).
\]
Letting $p\to -\infty$ yields that $h^*(p')=+\infty$ for all $p'\in \mathbb{R}$, which leads to a contradiction.
\end{proof}
\end{lemma}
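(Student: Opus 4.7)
The plan is to work directly from the definition $h^*(p) = \sup_{z\in\RR}(pz-h(z))$, which by the assumption $h\equiv +\infty$ on $(-\infty,0)$ reduces to $\sup_{z\geq 0}(pz-h(z))$. Nonnegativity is then immediate by plugging in $z=0$ and using $h(0)=0$, and monotonicity follows from the pointwise inequality $p_1 z - h(z)\leq p_2 z-h(z)$ for $z\geq 0$ whenever $p_1\leq p_2$, preserved under the supremum.

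For strict monotonicity on $\partial h((0,\infty))$, the key input is Lemma \ref{lem:dual_relation}. If $p_1\in\partial h(y_0)$ for some $y_0>0$, that lemma gives the exact equality $h^*(p_1)=p_1 y_0-h(y_0)$, whereas for $p_2>p_1$ the supremum defining $h^*(p_2)$ is bounded below by the single competitor $p_2 y_0-h(y_0)>p_1 y_0-h(y_0)$, where positivity of $y_0$ makes the inequality strict.

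The main obstacle is the last claim, $\sup\partial h^*(p)\to 0$ as $p\to -\infty$. My approach is by contradiction: since $h^*$ is convex, $p\mapsto \sup\partial h^*(p)$ is non-decreasing; since $h^*$ is non-decreasing, all its subgradients are nonnegative. If the limit is not zero, there exists $c>0$ with $\sup\partial h^*(p)\geq c$ for all $p\in\RR$. The subgradient inequality applied at $p$ with a subgradient approaching $\sup\partial h^*(p)$ then yields
\[
h^*(p')\geq h^*(p)+c(p'-p)\qquad\text{for all }p\leq p'.
\]
Fixing $p'$ and sending $p\to -\infty$, and using $h^*(p)\geq 0$, forces $h^*(p')=+\infty$ for every $p'$. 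This contradicts the properness of $h^*$, which is guaranteed by the fact that $h$ is itself a proper, lower semi-continuous, convex function (so $h^{**}=h$ is proper, forcing $h^*$ to be proper as well).

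I expect the subtle point to be ensuring that the growth inequality $h^*(p')\geq h^*(p)+c(p'-p)$ can genuinely be extracted from the hypothesis $\sup\partial h^*(p)\geq c$. In principle $\sup\partial h^*(p)$ need not be attained as a subgradient (the subdifferential at a point could be empty if one is outside $\mathrm{dom}(h^*)$, or attained at finite values otherwise), but on $\mathrm{int}\,\mathrm{dom}(h^*)$ the subdifferential is a nonempty closed bounded interval, so the supremum is attained and the subgradient inequality applies. Outside $\mathrm{dom}(h^*)$ we have $h^*(p)=+\infty$ and there is nothing to prove for that particular $p$; since $h^*$ is non-decreasing, one can always choose $p$ small enough so that $h^*(p)$ is finite while $h^*(p')$ is the one to be shown infinite.
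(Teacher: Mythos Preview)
Your proof is correct and follows essentially the same approach as the paper's: the same reduction of the supremum to $z\geq 0$, the same use of Lemma~\ref{lem:dual_relation} for strict monotonicity, and the same contradiction argument via the subgradient inequality for the limit claim. Your additional care about whether $\sup\partial h^*(p)$ is actually attained as a subgradient is a point the paper glosses over, but your resolution is sound and the arguments are otherwise identical.
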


\begin{lemma}\label{lem:pressure_upper_bound}
Suppose $p$ is a measurable function on $\Omega$ such that $p\leq M$ for some $M$ finite, and $\rho(x)\in \partial s^*(p(x),x)$ a.e.\;in $\Omega$.
Then $\rho\in L^{\infty}(\Omega)$ and $\rho p\in L^{\infty}(\Omega)$ and both bounds depend only on $s$ and $M$.
\end{lemma}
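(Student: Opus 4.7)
The backbone of the argument is the package of equivalences in Lemma~\ref{lem:dual_relation}: the assumption $\rho(x)\in\partial s^*(p(x),x)$ is equivalent both to $p(x)\in\partial s(\rho(x),x)$ and to the Young equality
\[
p(x)\rho(x)=s(\rho(x),x)+s^*(p(x),x).
\]
I will use the first of these rewrites to bound $\rho$ and the second to bound $\rho p$.

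For the $L^{\infty}$ bound on $\rho$, I start by recalling that assumption \ref{assumption: effective domain} forces $\rho(x)\geq 0$ a.e., since $\partial s(\rho,x)=\varnothing$ when $\rho<0$. Using $p(x)\in\partial s(\rho(x),x)$ and testing the subgradient inequality at $z=0$ (where $s(0,x)=0$) gives $p(x)\rho(x)\geq s(\rho(x),x)$. Now I invoke the superlinearity in \ref{assumption: effective domain}: $\lim_{z\to\infty}\inf_{x\in\Omega} s(z,x)/z=+\infty$ yields a radius $Z=Z(s,M)$ with $s(z,x)\geq (M+1)z$ for all $z\geq Z$ and all $x\in\Omega$. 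If $\rho(x)\geq Z$, the previous inequality forces $p(x)\geq M+1$, contradicting the hypothesis $p\leq M$. Hence $0\leq\rho\leq Z$ a.e., and $Z$ depends only on $s$ and $M$.

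For the $L^\infty$ bound on $\rho p$, I use the Young identity $\rho(x)p(x)=s(\rho(x),x)+s^*(p(x),x)$. The lower bound is immediate: $s\geq\inf_{(z,x)}s(z,x)$ is finite by \ref{assumption: effective domain}, and $s^*(\cdot,x)\geq 0$ by Lemma~\ref{lem:s_star_increasing} applied pointwise to $h=s(\cdot,x)$. Therefore $\rho p\geq\inf s>-\infty$. For the upper bound, $\rho\geq 0$ together with $\rho\leq Z$ and $p\leq M$ gives $\rho p\leq\max(M,0)\cdot Z$. Both bounds depend only on $s$ and $M$, completing the proof.

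The argument is essentially a bookkeeping exercise; the only step with any content is to recognize that testing the subdifferential inequality at $z=0$ together with the uniform superlinearity of $s$ is precisely what closes the bound on $\rho$. There is no real obstacle.
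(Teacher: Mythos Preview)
Your proof is correct and follows essentially the same approach as the paper. Both arguments extract the inequality $p(x)\rho(x)\geq s(\rho(x),x)$ (you via the subgradient inequality at $z=0$, the paper via the equivalent $\argmax$ formulation) and combine it with the uniform superlinearity of $s$ to bound $\rho$, then use the Young identity together with $s^*\geq 0$ and $\inf s>-\infty$ for the lower bound on $\rho p$; your explicit mention of the upper bound $\rho p\leq \max(M,0)Z$ just spells out what the paper leaves implicit.
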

\begin{proof}
The condition $\rho(x)\in \partial s^*(p(x),x)$ a.e.\;in $\Omega$ is equivalent to  $\rho(x)\in \argmax_{z\geq 0} zp(x)-s(z,x)$ a.e.\;in $\Omega$.
From \ref{assumption: effective domain} we have
\[
\lim_{z\to\infty}  \sup_{x\in\Omega} zp(x)-s(z,x)\leq \lim_{z\to\infty} \sup_{x\in\Omega} zM-s(z,x)=-\infty.
\]
Since $\rho(x)\in \argmax_{z\geq 0} zp(x)-s(z,x)$ for almost every $x$ and choosing $z=0$ always gives the value $0$, it follows that $\rho\in L^{\infty}(\Omega)$ with a bound that depends only on $s$ and $M$.

By Lemma \ref{lem:dual_relation}, that $\rho(x)\in \partial s^*(p(x),x)$ is equivalent to
\[
\rho(x)p(x)=s(\rho(x),x)+s^*(p(x),x).
\]
So we can derive that
\[
\essinf_{x\in\Omega} \rho(x)p(x)=\essinf_{x\in\Omega} s(\rho(x),x)+s^*(p(x),x)\geq \inf_{(z,x)\in\RR\times\Omega} s(z,x)>-\infty,
\]
where we used Lemma \ref{lem:s_star_increasing} to deduce that $s^*(p(x),x)\geq 0$.
Hence $\rho(x)p(x)$ is bounded from below, allowing us to conclude that $\rho p\in L^{\infty}(\Omega)$.
\end{proof}

\section{Properties of the Primal and the Dual Problems}\label{sec:dual}

In this section we show the equivalence of the primal and dual problems and give characterization of the corresponding extremizers.
We always assume $s$ satisfies \ref{assumption: convexity}-\ref{assumption: effective domain} and $c$ satisfies \ref{assumption: symmetry of c}-\ref{assumption: twist condition}.

\begin{prop}\label{prop:primal_dual}
Suppose that $\bar{\rho}\in X$ and satisfies \eqref{eq:good_data0}.
Then the primal problem \eqref{eq:primal_problem} has a unique minimizer $\rho^*\in X$ also satisfying \eqref{eq:good_data0}, and the dual problem \eqref{eq:dual_problem} admits at least one $c$-concave maximizer in $X^*$.
For the minimizing $\rho^*$ and any $c$-concave maximizer $p^*\in X^*$, 
we have
\[
J(\rho^*,\bar{\rho}) = \inf_{\rho\in X} J(\rho,\bar{\rho})=\sup_{p\in X^*} J^*(p,\bar{\rho}) = J^*(p^*,\bar{\rho})
\]
and
\[
\rho^*\in \partial  s^*(p^*(x),x)\quad a.e.\;x\in\Omega.
\]
Furthermore,
$T_{p^*}$ is the optimal map transporting $\bar{\rho}$ to $\rho^*$ for the cost $c$, and $\nabla p^*$ is unique $\rho^*$-a.e..
\end{prop}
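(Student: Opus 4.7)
The plan is to build the dual maximizer first, extract the primal minimizer from it via a pointwise Fenchel--Young relation, and then derive uniqueness and the transport-map description as consequences.

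\textbf{Step 1 (Dual existence).} I would take a maximizing sequence $\{p_n\}$ for $J^*(\cdot,\bar\rho)$ and immediately replace $p_n$ by $p_n^{c\bar c}$. This does not decrease $J^*$: Lemma \ref{lem:ccc} gives $(p_n^{c\bar c})^c = p_n^c$, while $p_n^{c\bar c} \leq p_n$ together with the monotonicity of $s^*(\cdot,x)$ (Lemma \ref{lem:s_star_increasing}) yields $E^*(p_n^{c\bar c}) \leq E^*(p_n)$. Since $c$-concave functions are uniformly Lipschitz (Lemma \ref{lem:lip}), only a uniform $L^\infty$ bound is missing. If $\min_\Omega p_n \to -\infty$, then by boundedness of $c$ on $\Omega\times\Omega$ one has $p_n^c \to -\infty$ uniformly, forcing $\int \bar\rho\,p_n^c \to -\infty$. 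If $\max_\Omega p_n \to +\infty$, the Lipschitz bound makes $\min_\Omega p_n\to +\infty$ as well; using \eqref{eq:good_data0} to pick a measurable $z_0$ with $\int z_0 > \int \bar\rho$ and $\int s(z_0,x)\,dx < \infty$ (take $z_0\in \partial s^*(b,\cdot)$ for $b$ large), the Fenchel bound $s^*(p_n,x)\geq p_n z_0 - s(z_0,x)$ makes $E^*(p_n)$ grow at rate $(\min p_n)\int z_0$, while $\int \bar\rho\,p_n^c\leq (\min p_n + \|c\|_\infty)\int\bar\rho$; hence $J^*(p_n,\bar\rho)\to -\infty$. Both cases contradict the maximizing property. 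Arzel\`a--Ascoli then supplies a uniformly convergent subsequence with $c$-concave limit $p^*$, and upper semi-continuity of $J^*$ under uniform convergence (continuity of $p\mapsto p^c$ plus Fatou for the convex $s^*$) shows $p^*$ is a maximizer in $X^*$.

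\textbf{Step 2 (Primal--dual link and strong duality).} Define $\rho^* := (T_{p^*})_\#\bar\rho$ with $T_{p^*}$ from Lemma \ref{lem:c_transform_variation}. To obtain $\rho^*\in\partial s^*(p^*,\cdot)$ a.e., I would test the optimality $\frac{d}{dt}\big|_{t=0^+}J^*(p^*+t\phi,\bar\rho)\leq 0$ with continuous $\phi$: Lemma \ref{lem:c_transform_variation} gives the $c$-transform contribution as $\int \phi(T_{p^*}(y))\bar\rho(y)\,dy$, which equals $\int\phi\,d(T_{p^*}\#\bar\rho)$ by the push-forward identity; meanwhile the convex integrand $s^*(\cdot,x)$ has a right-derivative in $p$ equal to $\phi(x)\sup\partial s^*(p^*(x),x)$ where $\phi\geq 0$ and to $\phi(x)\inf\partial s^*(p^*(x),x)$ where $\phi\leq 0$. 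Testing against nonnegative and nonpositive $\phi$ forces the measure $T_{p^*}\#\bar\rho$ to be absolutely continuous with density $\rho^*$ sandwiched between $\inf\partial s^*(p^*,\cdot)$ and $\sup\partial s^*(p^*,\cdot)$ a.e. Fenchel--Young equality then gives $s(\rho^*,x)+s^*(p^*,x) = p^*\rho^*$, and equality in Lemma \ref{lem:c_duality} (attained by the push-forward) gives $C(\rho^*,\bar\rho) = \int(p^*)^c\bar\rho - \int p^*\rho^*$; adding yields $J(\rho^*,\bar\rho) = J^*(p^*,\bar\rho)$. Combined with the weak-duality inequality $\inf J \geq \sup J^*$ (immediate from Lemma \ref{lem:c_duality} and Fenchel--Young applied to any admissible pair), this forces strong duality and shows $\rho^*$ is a primal minimizer.

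\textbf{Step 3 (Uniqueness, transport map, and $\nabla p^*$).} Suppose $\tilde\rho$ is another primal minimizer. By strong duality, both inequalities in the weak-duality chain must become equalities for $(\tilde\rho,p^*)$, giving (i) $\tilde\rho\in\partial s^*(p^*,\cdot)$ a.e. and (ii) $\tilde\rho$ attains $\inf_\rho[\int p^*\rho + C(\rho,\bar\rho)] = \int(p^*)^c\bar\rho$. Any optimal coupling $\pi\in\Pi(\tilde\rho,\bar\rho)$ then satisfies $\int[p^*(x)+c(x,y)-(p^*)^c(y)]\,d\pi = 0$ with a nonnegative integrand, so $\pi$ is supported on $\{(x,y): p^*(x)+c(x,y)=(p^*)^c(y)\}$, which by the twist condition \ref{assumption: twist condition} is the graph of $T_{p^*}$; hence $\tilde\rho = (T_{p^*})_\#\bar\rho = \rho^*$. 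That $T_{p^*}$ is the unique optimal map between $\bar\rho$ and $\rho^*$ is Theorem \ref{thm:fund_ot}. If $p_1^*,p_2^*$ are two $c$-concave dual maximizers, uniqueness of $\rho^*$ forces $(T_{p_i^*})_\#\bar\rho = \rho^*$, so both inverses $T_{p_i^*}^{-1}$ serve as the unique optimal map from $\rho^*$ to $\bar\rho$ and agree $\rho^*$-a.e.; applying \eqref{eq:backward_map} together with the injectivity $y\mapsto \nabla_x c(x,y)$ from \ref{assumption: twist condition} yields $\nabla p_1^* = \nabla p_2^*$ $\rho^*$-a.e. That $\rho^*$ inherits \eqref{eq:good_data0} is immediate from $\int\rho^* = \int\bar\rho$.

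\textbf{Main obstacle.} The delicate step is the uniform $L^\infty$ bound on the $c$-concave maximizing sequence in Step~1: because shifting a $c$-concave function by a constant changes $J^*$ in a nontrivial way (since $E^*$ is not translation invariant in $p$), both sides of \eqref{eq:good_data0} must enter decisively. A secondary technical point is the right-derivative computation for $E^*$ in Step~2, which needs care precisely because $s^*$ is only convex, not necessarily differentiable; the pinching argument with nonnegative and nonpositive test functions circumvents this.
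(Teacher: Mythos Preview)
Your proposal is correct and follows the same overall architecture as the paper's proof: establish a $c$-concave dual maximizer by compactness, read off the primal minimizer from the first-order optimality condition via Lemma~\ref{lem:c_transform_variation}, verify strong duality through Fenchel--Young and Theorem~\ref{thm:fund_ot}, and conclude uniqueness of $\nabla p^*$ from uniqueness of the optimal map. Two tactical differences are worth noting: in Step~1 the paper normalizes each $p_k$ to mean zero and then optimizes $J^*$ over additive constants (the bound on the optimal constant is where \eqref{eq:good_data0} enters), whereas you argue the $L^\infty$ bound directly by letting $\min p_n\to\pm\infty$ and invoking both sides of \eqref{eq:good_data0}; and in Step~3 the paper obtains uniqueness of $\rho^*$ in one line by citing strict convexity of $\rho\mapsto C(\rho,\bar\rho)$, while your duality-saturation argument is more self-contained but longer.
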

\begin{remark}
If $\bar{\rho}\in X$ while \eqref{eq:good_data0} fails, one can check that $\bar{\rho}$ is itself the minimizer of the primal problem. Thus, the excluded cases are trivial.
\end{remark}
\begin{remark}
Uniqueness of the optimal pressure $p^*$ may fail when $s^*(\cdot,x)$ is not strictly convex. Nevertheless, we will show later that there always exists a largest and a smallest $c$-concave maximizing pressure among all $c$-concave maximizers of the dual energy. See Lemma \ref{lem:big_max_unique} and Lemma \ref{lem:pressure_special}, respectively.
\end{remark}
\begin{proof}
 From Lemma \ref{lem:s_star_increasing}, it follows that the dual energy $E^*(p)$ is monotone, i.e.\;if $p_0(x)\leq p_1(x)$ for a.e.\;$x\in\Omega$ then $E^*(p_0)\leq E^*(p_1)$. Thus, given some $p\in X^*$, we see from Lemma \ref{lem:ccc} that
 \[
 \int_{\Omega} p^{c\bar{c}c}(y)\bar{\rho}(y)\, dy-E^*(p^{c\bar{c}})\geq \int_{\Omega}  \int_{\Omega} p^{c}(y)\bar{\rho}(y)\, dy-E^*(p).
 \]
Hence,
\[
\sup_{p\in X^*,\, p^{c\bar{c}}=p} J^*(p,\bar{\rho})=\sup_{p\in X^*} J^*(p,\bar{\rho}),
\]
and so we can restrict our search to the space of $c$-concave functions.

Let $p_k$ be a sequence of $c$-concave functions such that
\[
 \lim_{k\to\infty}   J^*(p_k,\bar{\rho})=\sup_{p\in X^*, \,p^{c\bar{c}}=p}  J^*(p,\bar{\rho}).
\]
If we set $\alpha_k=\frac{1}{|\Omega|}\int_{\Omega} p_k(x)\, dx$, then $\tilde{p}_k=p_k-\alpha_k$ is $c$-concave and has zero mean.  Thanks to Lemma \ref{lem:lip}, $\tilde{p}_k$ are uniformly bounded in $W^{1,\infty}(\Omega)$.  So we can assume without loss of generality that $\tilde{p}_k$ converges uniformly to a function $\tilde{p}$ of mean zero.
Next, we choose
\[
\beta_k\in \argmax_{\beta\in \RR} \int_{\Omega} (\tilde{p}_k(x)+\beta)^c\bar{\rho}(x)\, dx-E^*(\tilde{p}_k+\beta).
\]
Since $(\tilde{p}_k(x)+\beta)^c=\tilde{p}_k^c(x)+\beta$, we see that $\beta_k$ must satisfy
\[
\int_{\Omega} \bar{\rho}(x)\, dx = \int_{\Omega}\zeta(x)\, dx, \quad \mbox{ for some } \zeta(x)\in \partial s^*(\tilde{p}_k(x)+\beta_k,x) \mbox{ for a.e.}\;x.
\]
Since $\partial s^*(\tilde{p}_k(x)+\beta,x)$ is increasing with respect to $\beta$, and $\{\tilde{p}_k\}_k$ are uniformly bounded, it follows from \eqref{eq:good_data0} and Lemma \ref{lem:s_star_increasing} that the sequence $\{\beta_k\}_{k=1}^\infty$ exists and is bounded uniformly in $\RR$.  Hence, we can assume without loss of generality that the $\beta_k$ converge to a finite limit $\tilde{\beta}$.

Define $p^*=(\tilde{p}+\tilde{\beta})^{c\bar{c}}$.  We then have the string of inequalities
\[
J^*(p^*,\bar{\rho})\geq  J^*(\tilde{p}+\tilde{\beta},\bar{\rho})\geq \limsup_{k\to\infty}  J^*(\tilde{p}_k+\beta_k,\bar{\rho}),
\]
where the last inequality follows from the fact that the $c$-transform and $-E^*$ are upper semi-continuous with respect to pointwise convergence.
Thanks to the choice of $\beta_k$, we see that
\[
\limsup_{k\to\infty}  J^*(\tilde{p}_k+\beta_k,\bar{\rho})\geq \limsup_{k\to\infty}  J^*(\tilde{p}_k+\alpha_k,\bar{\rho})=\sup_{p\in X^*, \, p^{c\bar{c}}=p}  J^*(p,\bar{\rho}).
\]
Therefore, we can conclude that $p^*$ is a $c$-concave maximizer of the dual problem.

In the rest of the proof, with abuse of the notation, we let $p^*$ be an arbitrary $c$-concave maximizer in $X^*$ of the dual problem.
Using Lemma \ref{lem:c_transform_variation}, the optimality condition for the dual problem at $p^*$ implies that there exists $\eta(x)\in  \partial s^*(p^*(x),x)$ such that
\[
\int_{\Omega} \phi(T_{p^*}(y))\bar{\rho}(y)\, dy-\int_{\Omega}\phi(x)\eta(x) dx= 0,
\]
for every continuous function $\phi:\Omega\to\RR$.
Thus, if we define $\rho^*:=T_{p^*\#}\bar{\rho}$, we must have
\[
\rho^*(x)\in \partial s^*(p^*(x),x)\quad\mbox{ for a.e.\;}x\in \Omega.
\]
Therefore, from Lemma \ref{lem:dual_relation} we have the duality relation
\[
\int_{\Omega} \rho^*(x)p^*(x)\, dx=E(\rho^*)+E^*(p^*).
\]
Hence,
\[
\int_{\Omega} (p^*)^{c}(y)\bar{\rho}(y)\, dy-E^*(p^*)=E(\rho^*)+\int_{\Omega} (p^*)^c(y)\bar{\rho}(y)\, dy-\int_{\Omega} p^*(x)\rho^*(x)\, dx= E(\rho^*)+C(\rho^*,\bar{\rho}),
\]
where the last equality follows from Theorem \ref{thm:fund_ot}.  This allows us to conclude that
\[
J^*(p^*,\bar{\rho})= J(\rho^*,\bar{\rho}). 
\]

On the other hand, if we dualize the energy in the primal problem \eqref{eq:primal_problem}, we get
\[
\inf_{\rho\in X} J(\rho,\bar{\rho})\geq \inf_{\rho\in X}\sup_{p\in X^*} \int_{\Omega} p(x)\rho(x)\, dx+C(\rho,\bar{\rho})-E^*(p).
\]
Interchanging the supremum and the infimum, it follows that
\[
\inf_{\rho\in X} J(\rho,\bar{\rho})\geq \sup_{p\in X^*}\inf_{\rho\in X} \int_{\Omega} p(x)\rho(x)\, dx+C(\rho,\bar{\rho})-E^*(p)=\sup_{p\in X^*}\int_{\Omega} p^c(x)\bar{\rho}(x)\, dx-E^*(p),
\]
where we used Lemma \ref{lem:c_duality} in the last equality.  Note that the last expression is nothing but $\sup_{p\in X^*}J^*(p,\bar{\rho})$.
Thus, 
\[
\inf_{\rho\in X} J(\rho,\bar{\rho})\geq \sup_{p\in X^*}J^*(p,\bar{\rho})= J^*(p^*,\bar{\rho})=J(\rho^*,\bar{\rho})
\]
Therefore, $\rho^*$ is a minimizer of the primal problem.  The cost functional $\rho\mapsto C(\rho,\bar{\rho})$ is strictly convex over $L^1(\Omega)$ \cite{OTAM}, so $\rho^*$ must be unique.
That $\rho^*$ satisfies \eqref{eq:good_data0} is obvious since it has the same total mass as $\bar{\rho}$.

Finally, if $p^*$ and $\tilde{p}^*$ are two maximizers of the dual problem, then both $T_{\tilde{p}^*}^{-1}$ and $T_{p^*}^{-1}$ are the optimal maps transporting $\rho^*$ to $\bar{\rho}$.  Thanks to Theorem \ref{thm:fund_ot}, the optimal map is unique up to a $\rho^*$-measure-zero set. It then follows from \eqref{eq:backward_map} that $\nabla \tilde{p}^*(x)=\nabla p^*(x)$ for $\rho^*$-a.e.\;$x\in \Omega$.
\end{proof}

To better understand the maximizing $c$-concave pressures, which may not be unique in certain situations, we denote the set of all $c$-concave maximizers of the dual functional to be
\begin{equation}\label{eqn: def of Sigma_rho}
\Sigma(\bar{\rho}):=\argmax_{p\in X^*,\, p^{c\bar{c}}=p} J^*(p,\bar{\rho}).
\end{equation}
The following lemma states that there always exists a largest $c$-concave maximizer in $\Sigma(\bar{\rho})$.

\begin{lemma} \label{lem:big_max_unique}
If $\bar{\rho}\in X$ and satisfies \eqref{eq:good_data0}, then there exists a unique $p^*\in \Sigma(\bar{\rho})$
such that $p^*\geq \tilde{p}$ for any $\tilde{p}\in \Sigma(\bar{\rho})$.
In other words, $p^*$ is the largest $c$-concave maximizer of the dual problem.
\end{lemma}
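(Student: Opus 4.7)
The plan is to construct the largest $c$-concave maximizer as the pointwise supremum $p^*(x) := \sup_{p \in \Sigma(\bar\rho)} p(x)$ and verify $p^* \in \Sigma(\bar\rho)$; uniqueness is then immediate from the pointwise ordering (if $\tilde p^*$ is another largest element, then $p^* \leq \tilde p^*$ and $\tilde p^* \leq p^*$ force equality). The key content is a closure property: $\Sigma(\bar\rho)$ is stable under finite pointwise maxima.

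First I would prove the closure lemma: for any $p_1, p_2 \in \Sigma(\bar\rho)$, the maximum $p_3 := \max(p_1, p_2)$ again belongs to $\Sigma(\bar\rho)$. Its $c$-concavity follows by interchanging suprema in the $c$-concave representation $p_i(x) = \sup_y p_i^c(y) - c(x, y)$, giving $p_3 = (\max(p_1^c, p_2^c))^{\bar c}$, a $\bar c$-transform and hence $c$-concave by Lemma \ref{lem:ccc}. Proposition \ref{prop:primal_dual} supplies a common primal minimizer $\rho^*$ with $p_i(x) \in \partial s(\rho^*(x), x)$ for a.e.\;$x$; since the subdifferential is a closed interval, $p_3(x)$ also lies there, and Lemma \ref{lem:dual_relation} gives $E^*(p_3) = \int_\Omega \rho^* p_3\,dx - E(\rho^*)$. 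Theorem \ref{thm:fund_ot} also provides a common optimal map $T := T_{p_1} = T_{p_2}$ transporting $\bar\rho$ to $\rho^*$, and for $\bar\rho$-a.e.\;$y$ the inequality
\[
\max(p_1(x), p_2(x)) + c(x, y) \geq \max(p_1^c(y), p_2^c(y)) = p_3(T(y)) + c(T(y), y) \quad\forall\, x\in\Omega
\]
identifies $T(y) = \argmin_x p_3(x) + c(x, y) = T_{p_3}(y)$, so $T_{p_3 \#}\bar\rho = \rho^*$. The converse direction of Theorem \ref{thm:fund_ot} then yields $\int p_3^c \bar\rho\,dy - \int p_3 \rho^*\,dx = C(\rho^*, \bar\rho)$; combining with the formula for $E^*(p_3)$ produces $J^*(p_3, \bar\rho) = E(\rho^*) + C(\rho^*, \bar\rho) = J(\rho^*, \bar\rho)$, placing $p_3$ in $\Sigma(\bar\rho)$.

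Next I would promote this from finite maxima to the full supremum via separability. Lemma \ref{lem:lip} supplies a common Lipschitz constant on $\Sigma(\bar\rho)$; since $\int_\Omega \bar\rho > 0$, one can select a small ball $B \subset \Omega$ on which $\rho^*$ is bounded away from both $0$ and $\infty$, and there the pointwise constraint $p(x) \in \partial s(\rho^*(x), x)$ a.e.\;combined with continuity of $p$ yields a uniform $L^\infty$ bound on $\Sigma(\bar\rho)$. Arzela-Ascoli then makes $\Sigma(\bar\rho)$ relatively compact, hence separable, in $C(\bar\Omega)$. Pick a countable uniformly dense $\{p_n\} \subset \Sigma(\bar\rho)$ and set $q_n := \max(p_1, \ldots, p_n)$; the closure lemma iterated puts each $q_n$ in $\Sigma(\bar\rho)$, and density forces $q_n \uparrow p^*$ uniformly on $\bar\Omega$. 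Uniform convergence transfers to $q_n^c \to (p^*)^c$, and dominated convergence with dominant $s^*(p^*(x), x) = \rho^*(x) p^*(x) - s(\rho^*(x), x) \in L^1$ (verified by the same subdifferential argument applied to $p^*$) gives $E^*(q_n) \to E^*(p^*)$; passing to the limit in $J^*(q_n, \bar\rho) = J(\rho^*, \bar\rho)$ shows $p^* \in \Sigma(\bar\rho)$, and by construction $p^* \geq p$ for every $p \in \Sigma(\bar\rho)$.

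The most delicate step is the transport-map identity inside the closure lemma, namely that the common optimal map $T$ for $p_1, p_2$ remains the unique argmin when $p_3$ replaces each $p_i$; once this combinatorial point is handled, everything else reduces to standard applications of Theorem \ref{thm:fund_ot} together with a routine Arzela-Ascoli and dominated convergence passage to the limit.
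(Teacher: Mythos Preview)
Your proposal is correct and follows essentially the same strategy as the paper: prove that $\Sigma(\bar\rho)$ is closed under pointwise maxima, define $p^*$ as the pointwise supremum, and approximate it uniformly by finite maxima of elements of $\Sigma(\bar\rho)$. The only cosmetic difference is in how the approximating sequence is produced: the paper fixes a countable dense set of \emph{points} $\{x_k\}\subset\Omega$ and for each $N$ takes maxima of $N$ maximizers chosen to nearly realize $p^*$ at $x_1,\dots,x_N$, whereas you invoke Arzel\`a--Ascoli to extract a countable dense set of \emph{functions} in $\Sigma(\bar\rho)$ and take running maxima. Both routes lead to a monotone, uniformly Lipschitz sequence $q_n\in\Sigma(\bar\rho)$ with $q_n\to p^*$ uniformly.

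One point deserves care: your argument for the uniform $L^\infty$ bound on $\Sigma(\bar\rho)$ is not quite solid as written. You propose to find a ball $B$ on which $\rho^*$ is bounded away from $0$ and $\infty$, and then use $p(x)\in\partial s(\rho^*(x),x)$ there. But $\rho^*$ is merely an $L^1$ function, so no such ball need exist; and even on a positive-measure set where $\rho^*\in[a,b]$ with $0<a<b$, the subdifferential $\partial s(z,x)$ can be unbounded under \ref{assumption: convexity}--\ref{assumption: effective domain} alone (e.g.\ a hard-constraint energy with $\partial s(1,x)=[0,\infty)$). The uniform bound is nevertheless true, and the clean way to get it is the integral argument implicit in the proof of Proposition~\ref{prop:primal_dual}: any $c$-concave $p\in\Sigma(\bar\rho)$ has oscillation at most $L\cdot\mathrm{diam}(\Omega)$, and the relation $\rho^*(x)\in\partial s^*(p(x),x)$ together with the mass constraint $\int\rho^*=\int\bar\rho$ and condition \eqref{eq:good_data0} forces the level of $p$ into a bounded interval. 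With this correction your argument goes through.
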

\begin{proof}
Let us first show that if $p_0, p_1\in \Sigma(\bar{\rho})$, then $q:=\max(p_0,p_1)\in \Sigma(\bar{\rho})$.  Clearly,
\[
q^{c\bar{c}}\geq \max(p^{c\bar{c}}_0, p_1^{c\bar{c}})=\max(p_0,p_1)=q,
\]
so $q$ is $c$-concave.
Next, let $\rho^*=\argmin_{\rho} J(\rho,\bar{\rho})$, which is unique. Then Proposition \ref{prop:primal_dual} together with Lemma \ref{lem:dual_relation} implies that $p_0(x), p_1(x)\in \partial s(\rho^*(x),x)$ for almost all $x\in \Omega$.  Therefore, we also have $q(x)\in \partial s(\rho^*(x),x)$ for almost all $x$.

Proposition \ref{prop:primal_dual} and Theorem \ref{thm:fund_ot} also imply $T_{p_0}(x)=T_{p_1}(x)$ for $\bar{\rho}$-a.e.\;$x$. We can then see that for $\bar{\rho}$-a.e.\;$x$,
\[
q\big(T_{p_0}(x)\big)+c\big(x,T_{p_0}(x)\big)=\max_{i\in \{0,1\}} \Big( p_i\big(T_{p_i}(x)\big)+c\big(x, T_{p_i}(x)\big)\Big).
\]
On the other hand, for $\bar{\rho}$ almost every $x\in \Omega$ and any $y\in\Omega$,
\[
\max_{i\in \{0,1\}} \Big( p_i\big(T_{p_i}(x)\big)+c\big(x, T_{p_i}(x)\big)\Big)\leq \max_{i\in \{0,1\}} \Big( p_i\big(y\big)+c\big(x,y\big)\Big)=q(y)+c(x,y).
\]
Hence,
\[
T_{p_0}(x)=\argmin_{y\in\Omega} q(y)+c(x,y)=T_{q}(x)
\]
for $\bar{\rho}$ almost every $x$.  Combining the facts that $q$ is $c$-concave, $q(x)\in \partial s(\rho^*(x),x)$ a.e.\;and $T_{q\,\#}\bar{\rho}=\rho^*$, one can follow the proof of Proposition \ref{prop:primal_dual} to show that
\[
J^*(q,\bar{\rho})= \inf_{\rho} J(\rho, \bar{\rho}).
\]
This implies that $q$ is a maximizer.

Now define
\[
p^*(x)=\sup_{\tilde{p}\in \Sigma(\bar{\rho})} \tilde{p}(x).
\]
Clearly $p^*\geq \tilde{p}$ for any $\tilde{p}\in \Sigma(\bar{\rho})$, and thus it suffices to show that $p^*$ is a $c$-concave maximizer.
The $c$-concavity of $p^*$ is clear, since
\[
(p^*)^{c\bar{c}}(x)\geq \sup_{\tilde{p}\in \Sigma(\bar{\rho})}\tilde{p}^{c\bar{c}}(x)=\sup_{\tilde{p}\in \Sigma(\bar{\rho})}\tilde{p}(x)=p^*(x).
\]

Let $\{x_k\}_{k\in\ZZ_+}$ be a dense subset of $\Omega$.  For each $ n,k\in\ZZ_+$, there exists some $p_{n,k}\in \Sigma(\bar{\rho})$ such that $p^*(x_k)\leq p_{n,k}(x_k)+\frac{1}{n}$.  For each $N\in\ZZ_+$, define
\[
q_N:=\max_{1\leq k\leq N} p_{N,k}.
\]
Our argument above shows that $q_N\in \Sigma(\bar{\rho})$ for all $N\in \ZZ_+$. The $c$-concavity implies that $p^*$ and the family $\{q_N\}_{N\in\ZZ_+}$ are uniformly Lipschitz.
Hence, for any $x\in\Omega$ we have
\[
|p^*(x)-q_N(x)|\leq \frac{1}{N}+2L\min_{1\leq k\leq N} |x-x_k|,
\]
where $L$ is the Lipschitz constant associated to $c$-concave functions on $\Omega$.
Thus,
\[
\lim_{N\to\infty} |p^*(x)-q_N(x)|=0.
\]
Moreover, since
\[
\sup_{x\in \Omega}\min_{1\leq k\leq N} |x-x_k| \to 0\quad \mbox{ as } N\to +\infty
\]
due to the density of $\{x_k\}_{k\in \ZZ_+}$, we know that the convergence from $q_N$ to $p^*$ is uniform in $x$.
%
The functional $J^*(\cdot, \bar{\rho})$ is clearly continuous with respect to uniform convergence, therefore $p^*$ is a maximizer.
\end{proof}

Next we prove a maximum-principle-type result for the pressure variable, which is of independent interest.
In order for the statement to hold, we will need to assume the initial density is almost everywhere positive.

\begin{prop}\label{prop:maximum_principle}
Suppose that $\bar{\rho}\in X$ satisfies \eqref{eq:good_data0} and that $\bar{\rho}>0$ a.e.\;in $\Omega$.
Let $\bar{p}(x)\in \partial s(\bar{\rho}(x),x)$ for almost every $x$.
Denote
$$
a:=\essinf_{x\in\Omega} \bar{p}(x),\quad b:=\esssup_{x\in\Omega} \bar{p}(x).
$$
Let $\rho^*$ be the minimizer of the primal problem and take an arbitrary $\tilde{p}\in \Sigma(\bar{\rho})$. 
We have the following dichotomy:
\begin{enumerate}
\item If $\rho^*\neq\bar{\rho}$, then $\tilde{p}(x)\in [a,b]$ for all $x$.
\item If $\rho^*=\bar{\rho}$, then all members of $\Sigma(\bar{\rho})$ are constant functions.
More precisely, there exists a bounded closed interval $[a',b']\subset \RR$ with $[a,b]\cap [a',b'] \neq \varnothing$, such that $p\in \Sigma(\bar{\rho})$ if and only if $p$ is constant function with its value in $[a',b']$.
\end{enumerate}
\end{prop}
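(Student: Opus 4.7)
The plan is to prove (1) by contradiction, showing that if $A := \{x \in \Omega : \tilde{p}(x) > b\}$ has positive Lebesgue measure then $\rho^* = \bar{\rho}$. Since $\tilde{p}$ is Lipschitz by Lemma \ref{lem:lip}, $A$ is open. The relations $\rho^*(x) \in \partial s^*(\tilde{p}(x),x)$ and $\bar{\rho}(x) \in \partial s^*(\bar{p}(x),x)$, together with $\tilde{p}(x) > b \geq \bar{p}(x)$ on $A$ and the monotonicity of $\partial s^*(\cdot,x)$, yield $\rho^* \geq \bar{\rho}$ a.e.\;on $A$.

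The key observation is a contraction-of-pressure property for the optimal map $T := T_{\tilde{p}}$ (which pushes $\bar{\rho}$ to $\rho^*$):
\[
\tilde{p}(T(y)) + c(T(y),y) = \tilde{p}^c(y) \leq \tilde{p}(y) + c(y,y) = \tilde{p}(y),
\]
so $\tilde{p} \circ T \leq \tilde{p}$ and thus $T^{-1}(A) \subseteq A$. Combining this containment with the mass-balance identity and the pointwise inequality $\rho^*\geq \bar{\rho}$ on $A$,
\[
\int_A \bar{\rho}\,dx \leq \int_A \rho^*\,dx = \int_{T^{-1}(A)} \bar{\rho}\,dx \leq \int_A \bar{\rho}\,dx,
\]
so the chain collapses into equalities, yielding $\rho^* = \bar{\rho}$ a.e.\;on $A$ and $T^{-1}(A) = A$ modulo null sets.

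The most delicate step, and where I expect the main technical obstacle, is upgrading these equalities to $T = \mathrm{id}$ a.e.\;on $A$. Setting $\tilde{T} := \mathrm{id}$ on $A$ and $\tilde{T}:=T$ on $\Omega\setminus A$, the previous paragraph ensures $\tilde{T}_\# \bar{\rho} = \rho^*$, and the cost of $\tilde{T}$ is no greater than that of $T$. Uniqueness of the optimal map (Theorem \ref{thm:fund_ot}) then forces $T = \tilde{T}$ $\bar{\rho}$-a.e., so $T = \mathrm{id}$ a.e.\;on $A$. Inserting this into the characterization \eqref{eq:forward_map} and using $\nabla_x c(y,y) = 0$ (since $c(\cdot,y)\geq 0 = c(y,y)$), I obtain $\nabla \tilde{p} = 0$ a.e.\;on $A$. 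Being Lipschitz, $\tilde{p}$ is then constant on each connected component of $A$, with value strictly exceeding $b$. Continuity of $\tilde{p}$ would force the boundary trace to equal $b$ on $\partial A \cap \Omega$, so connectedness of $\Omega$ (standard for smooth bounded domains) forces $A = \Omega$; hence $\tilde{p}$ is a global constant, $T \equiv \mathrm{id}$ globally, and $\rho^* = \bar{\rho}$, contradicting the standing hypothesis. The symmetric argument applied to $B := \{\tilde{p} < a\}$ completes (1).

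For (2), if $\rho^* = \bar{\rho}$ then the unique optimal map is the identity, so any $c$-concave maximizer $\tilde{p}$ satisfies $T_{\tilde{p}} = \mathrm{id}$ $\bar{\rho}$-a.e.; combined with $\bar{\rho} > 0$ a.e.\;and \eqref{eq:forward_map}, this gives $\nabla \tilde{p} = 0$ a.e.\;in $\Omega$, so $\tilde{p} \equiv \alpha$ is a constant. A constant $\alpha$ is a maximizer iff $\alpha \in \partial s(\bar{\rho}(x),x) =: [\sigma_-(x),\sigma_+(x)]$ for a.e.\;$x$, i.e., iff $\alpha \in [a',b'] := [\esssup\sigma_-, \essinf\sigma_+]$. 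Proposition \ref{prop:primal_dual} guarantees at least one such $\alpha$, so $[a',b']\neq\varnothing$. Finally, $\sigma_-(x) \leq \bar{p}(x) \leq \sigma_+(x)$ gives $a' \leq b$ and $b' \geq a$, ensuring $[a,b]\cap[a',b']\neq\varnothing$.
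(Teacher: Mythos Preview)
Your proof is correct and follows essentially the same route as the paper's: both arguments use the containment $T^{-1}(\{\tilde p>b\})\subset\{\tilde p>b\}$ together with the monotonicity of $\partial s^*(\cdot,x)$ to force $\rho^*=\bar\rho$ on the level set, then replace $T$ by the identity there and invoke uniqueness of the optimal map to conclude $\nabla\tilde p=0$, whence connectedness of $\Omega$ gives the dichotomy. The only point you leave implicit is the boundedness of $[a',b']$ in part~(2), which the paper deduces from~\eqref{eq:good_data0}; this is a minor omission.
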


\begin{proof}
Choose a $c$-concave maximizer $\tilde{p}\in \Sigma(\bar{\rho})
$ and let $\rho^*=T_{\tilde{p}\,\#}\bar{\rho}$.
It follows from Proposition \ref{prop:primal_dual} that $\rho^*$ is the unique solution of the primal problem and $\rho^*\in \partial s^*(\tilde{p},x)$ a.e..
Let
\begin{align*}
V&:=\{x\in\Omega: \tilde{p}(x)>b\},\\
U&:= \{x\in\Omega: \tilde{p}(x)<a\}.
\end{align*}
Since $\tilde{p}$ is $c$-concave and hence Lipschitz, both $V$ and $U$ are open sets.

Suppose that $U\cup V\neq\varnothing$.  $V$ is an upper level set of $\tilde{p}$ and $U$ is a lower level set of $\tilde{p}$, so we have the inclusions $T_{\tilde{p}}^{-1}(V)\subset V$ and $U\subset T_{\tilde{p}}^{-1}(U)$.  The monotonicity of $\partial s^*(\cdot,x)$ implies that
  \[
 \bar{\rho}(x)\leq
 \rho^*(x) \quad \textrm{for a.e.}\; x\in V,
   \]
   and
    \[
   \rho^*(x)\leq
   \bar{\rho}(x)  \quad \textrm{for a.e.}\; x\in U.
   \]
Here the monotonicity of $\partial s^*(\cdot,x)$ is understood in the following sense: for any $b^{(0)}<b^{(1)}$ and any $\eta^{(i)}\in \partial s^*(b^{(i)},x)$ $(i = 0,1)$, we have $\eta^{(0)}\leq \eta^{(1)}$.
Hence, we can compute
\[
0\leq \int_{V} \rho^*(x)-\bar{\rho}(x)\, dx=-\int_{V-T_{\tilde{p}}^{-1}(V)} \bar{\rho}(x)\, dx \leq 0,
\]
and
\[
0\geq \int_{U} \rho^*(x)-\bar{\rho}(x) \,dx=\int_{T_{\tilde{p}}^{-1}(U)-U} \bar{\rho}(x)\, dx \geq 0.
\]
Hence it follows that $\rho^*(x)=\bar{\rho}(x)$ for almost all $x\in U\cup V$.

Consider the maps
\[
S_1(x)=\begin{cases}
x & \textrm{if}\; x\in V,\\
T_{\tilde{p}}(x) & \textrm{otherwise,}
\end{cases}
\]
and
\[
S_2(x)=\begin{cases}
x & \textrm{if}\; x\in U,\\
T^{-1}_{\tilde{p}}(x) & \textrm{otherwise.}
\end{cases}
\]
Since $T_{\tilde{p}}^{-1}(V)\subset V$ and $\bar{\rho}(x)=\rho^*(x)$ for a.e.\;$x\in V$,  it follows that $\big(S_{1 \#}\bar{\rho}\big)(x)=\rho^*(x)$ a.e.\;on $V$ and $\big(S_{1 \#}\bar{\rho}\big)(x)\leq \rho^*(x)$ for a.e.\;$x\notin V$.  But since pushforwards preserve mass, this is only possible if $S_{1 \#}\bar{\rho}=\rho^*$.  Next, it is clear that the transportation cost of $S_1$ cannot exceed the transportation cost of $T_{\tilde{p}}$, thus $S_1$ must be an optimal transport map between $\bar{\rho}$ and $\rho^*$.  The uniqueness of optimal maps (see Theorem \ref{thm:fund_ot}) then implies that $S_1=T_{\tilde{p}}$ $\bar{\rho}$-almost everywhere.  An analogous argument shows that $S_2=T_{\tilde{p}}^{-1}$ $\rho^*$-almost everywhere.
Now we can conclude that $T_{\tilde{p}}(x)=T_{\tilde{p}}^{-1}(x)=x$ for $\bar{\rho}$-almost all $x\in U\cup V$.
Combining \eqref{eq:forward_map} and the assumptions \ref{assumption: symmetry of c}-\ref{assumption: regularity of c} we can conclude that $\nabla \tilde{p}(x)=0$ for $\bar{\rho}$-a.e.\;$x\in U\cup V$.
Since $\bar{\rho}>0$ a.e.\;in $\Omega$, this implies $\nabla \tilde{p}(x)=0$ a.e.\;in $U\cup V$.

Given some $x_0\in U\cup V$, let $A$ be the connected component of $x_0$ in $U\cup V$. It is then clear that $\tilde{p}$ is constant on $A$.  Since $\tilde{p}$ is $c$-concave and hence Lipschitz, $A$ must be both open and closed.  By the connectivity of $\Omega$, this is only possible if $A=\Omega$.
It then follows that $\tilde{p}$ is constant, $\nabla\tilde{p}=0$ on $\Omega$ and hence, $\rho^*=\bar{\rho}$ almost everywhere.
Therefore, if $\rho^*\neq \bar{\rho}$, we must have $U\cup V = \varnothing$ and thus $\tilde{p}\in [a,b]$.

Now we prove the second part of the dichotomy.
If $\rho^*=\bar{\rho}$, then for any $p\in \Sigma(\bar{\rho})$  we have $T_{p}(x)=x$ for  $\bar{\rho}$ almost every $x$.  Hence, $\Sigma(\bar{\rho})$ only contains constant functions.  Now it is clear from Proposition \ref{prop:primal_dual} that $p\in \Sigma(\bar{\rho})$ if and only if $p$ is constant and $p\in \partial s(\rho^*(x),x)$ for almost every $x$.    Combining the condition $p\in \partial s(\rho^*(x),x)$ with the assumption \eqref{eq:good_data0}, we see that the maximizers must be contained in a bounded set.       $J^*$ is concave and continuous with respect to uniform convergence, so the set of maximizers must be closed and convex.   Now it follows that $\Sigma(\bar{\rho})=[a', b']$ for some $a', b'\in \RR$.

Finally,  if $[a',b']\cap [a,b]=\varnothing$, then there must exist a set $\Omega'\subset\Omega$ of positive measure such that $[a,b]\cap \partial s(\rho^*(x),x)=\varnothing$ for all $x\in\Omega'$.
However, this contradicts the existence of $\bar{p}$, and thus, $[a,b]\cap [a',b']\neq \varnothing$.
\end{proof}

\section {$L^1$-contraction}
\label{sec: L^1 contraction}

In this section we prove the $L^1$-contraction principle for the discrete solutions.   At the discrete level, we have the $c$-concavity of the pressure functions, which provides regularity that is independent of $s$. This allows for a pointwise argument that is not as dependent on the regularity of $s$ as it is for the continuum solutions.

\medskip

At the heart of the $L^1$-contraction principle is the following simple observation.
\begin{lemma}\label{lem:fundamental_property}
Let $p_0, p_1:\Omega\to\RR$ be $c$-concave functions and let $U=\{x\in\Omega: p_0(x)<p_1(x)\}$.  If $T_{p_1}(y)\in U$, then $T_{p_0}(y)\in U$.
\end{lemma}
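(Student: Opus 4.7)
The plan is to exploit the variational characterization of the transport map given in \eqref{eq:map_argmin}, namely $T_p(y)=\argmin_{x\in\Omega} p(x)+c(x,y)$, which applies here since both $p_0$ and $p_1$ are $c$-concave. The argument should be a short chain of inequalities: starting from the hypothesis $T_{p_1}(y)\in U$, I would first use the minimality defining $T_{p_0}(y)$ to compare $p_0(T_{p_0}(y))+c(T_{p_0}(y),y)$ against $p_0(T_{p_1}(y))+c(T_{p_1}(y),y)$, then apply the pointwise inequality $p_0(T_{p_1}(y))<p_1(T_{p_1}(y))$ supplied by the hypothesis, and finally use the minimality defining $T_{p_1}(y)$ to bound $p_1(T_{p_1}(y))+c(T_{p_1}(y),y)$ above by $p_1(T_{p_0}(y))+c(T_{p_0}(y),y)$.

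Concretely, the chain I expect to produce is
\[
p_0(T_{p_0}(y))+c(T_{p_0}(y),y)\;\leq\; p_0(T_{p_1}(y))+c(T_{p_1}(y),y)\;<\; p_1(T_{p_1}(y))+c(T_{p_1}(y),y)\;\leq\; p_1(T_{p_0}(y))+c(T_{p_0}(y),y),
\]
after which cancelling the common term $c(T_{p_0}(y),y)$ yields $p_0(T_{p_0}(y))<p_1(T_{p_0}(y))$, i.e.\ $T_{p_0}(y)\in U$.

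There is essentially no obstacle here: the entire content of the lemma is the observation that the two outer inequalities are almost-sure minimality statements for $c$-concave pressures (Lemma~\ref{lem:c_transform_variation} and the remark giving \eqref{eq:map_argmin}), and the strict middle inequality is exactly the hypothesis $T_{p_1}(y)\in U$. No regularity on $s$, on $\partial\Omega$, or on $c$ beyond \ref{assumption: symmetry of c}--\ref{assumption: twist condition} is used, and in fact the twist condition is not even invoked at this step; it only serves in the background to guarantee that $T_{p_0},T_{p_1}$ are well defined as single-valued maps. Thus the proof should occupy only a few lines, and its role in the paper is to serve as the pointwise seed for the $L^1$-contraction estimate, where this ``level-set preservation'' will be integrated against the source measure.
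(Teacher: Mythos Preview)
Your proposal is correct and uses essentially the same approach as the paper: both rely on the variational characterization \eqref{eq:map_argmin} and the same three-step chain of inequalities. The paper organizes it contrapositively---showing any $\tilde{x}\notin U$ is a strictly worse competitor than $T_{p_1}(y)$ for the $p_0$-minimization---whereas you verify $p_0(T_{p_0}(y))<p_1(T_{p_0}(y))$ directly, but this is only a cosmetic difference.
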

\begin{proof}
Recall that
\[
T_{p_i}(y)=\argmin_{x\in\Omega} p_i(x)+c(x,y).
\]
For any $\tilde{x}\notin U$ and $y$ satisfying $T_{p_1}(y)\in U$,
\[
\begin{split}
p_0(\tilde{x})+c(\tilde{x},y)\geq &\;p_1(\tilde{x})+c(\tilde{x},y)\\
\geq &\;
p_1\big(T_{p_1}(y)\big)+c\big(T_{p_1}(y),y\big)> p_0\big(T_{p_1}(y)\big)+c\big(T_{p_1}(y),y\big),
\end{split}
\]
where the second inequality follows from the definition of $T_{p_1}(y)$. 
The above computation shows us that compared to $\tilde{x}\notin U$, $T_{p_1}(y)$ is always a better competitor for $\min_{x\in\Omega} p_0(x)+c(x,y)$.  Thus, it follows that $T_{p_0}(y)\in U$.
\end{proof}

We first establish the $L^1$-contraction  property in the case that the mapping $z\mapsto \partial s(z,x)$ is strictly monotone.
\begin{lemma}\label{lem:l1_contraction}
Given $\rho_0, \rho_1\in X$ satisfying \eqref{eq:good_data0}, let
\[
\rho_i^*=\argmin_{\rho\in X} J(\rho,\rho_i). 
\]
If for all $x\in \Omega$, $z\mapsto \partial s(z,x)$ is strictly increasing when $z\geq 0$, then
\[
\norm{(\rho_1^*-\rho_0^*)_+}_{L^1(\Omega)}\leq \norm{(\rho_1-\rho_0)_+}_{L^1(\Omega)}.
\]
\end{lemma}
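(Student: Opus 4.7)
The plan is to combine the duality from Proposition \ref{prop:primal_dual} with the pointwise transport inclusion from Lemma \ref{lem:fundamental_property}. By Proposition \ref{prop:primal_dual}, for $i = 0, 1$ we can pick $c$-concave maximizers $p_i$ of the dual problem at $\rho_i$, with $\rho_i^* = T_{p_i\,\#}\rho_i$ and $\rho_i^*(x) \in \partial s^*(p_i(x),x)$ a.e. Since $s(\cdot,x)$ is strictly convex (by strict monotonicity of $\partial s(\cdot,x)$), its conjugate $s^*(\cdot,x)$ is differentiable, so $\partial s^*(\cdot,x)$ is single-valued and nondecreasing, and we may write $\rho_i^*(x) = \partial_p s^*(p_i(x),x)$.

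Define the open set $U := \{x \in \Omega : p_0(x) < p_1(x)\}$ (open because the $c$-concave functions $p_i$ are continuous by Lemma \ref{lem:lip}). Two monotonicity facts are key: on $U$, nondecreasingness of $\partial_p s^*(\cdot,x)$ gives $\rho_0^*(x) \leq \rho_1^*(x)$; conversely, strict monotonicity of $\partial s(\cdot,x)$ implies that wherever $\rho_0^*(x) < \rho_1^*(x)$ one must have $p_0(x) < p_1(x)$, i.e.\ $\{\rho_0^* < \rho_1^*\} \subseteq U$ up to a null set. Together these yield
\[
\|(\rho_1^* - \rho_0^*)_+\|_{L^1(\Omega)} = \int_U (\rho_1^*(x) - \rho_0^*(x))\, dx = \int_U \rho_1^*\, dx - \int_U \rho_0^*\, dx.
\]

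Using $\rho_i^* = T_{p_i\,\#}\rho_i$, the pushforward identity rewrites these as integrals of $\rho_i$ on preimages:
\[
\int_U \rho_1^*\, dx - \int_U \rho_0^*\, dx = \int_{T_{p_1}^{-1}(U)} \rho_1(y)\, dy - \int_{T_{p_0}^{-1}(U)} \rho_0(y)\, dy.
\]
Now invoke Lemma \ref{lem:fundamental_property}: if $T_{p_1}(y) \in U$ then $T_{p_0}(y) \in U$, i.e.\ $T_{p_1}^{-1}(U) \subseteq T_{p_0}^{-1}(U)$. Hence $\int_{T_{p_0}^{-1}(U)} \rho_0 \geq \int_{T_{p_1}^{-1}(U)} \rho_0$, and the displayed expression is dominated by
\[
\int_{T_{p_1}^{-1}(U)} (\rho_1(y) - \rho_0(y))\, dy \;\leq\; \int_{T_{p_1}^{-1}(U)} (\rho_1 - \rho_0)_+\, dy \;\leq\; \|(\rho_1 - \rho_0)_+\|_{L^1(\Omega)}.
\]

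I expect the only subtle step to be the identification $\{\rho_0^* < \rho_1^*\} \subseteq U$ up to a null set, since $p_i$ and $\rho_i^*$ need only agree pointwise almost everywhere and the subdifferential relation holds a.e.; this is handled by fixing canonical representatives of the $c$-concave $p_i$ (they are continuous) and using that strict monotonicity of $\partial s(\cdot,x)$ forbids $\rho_0^*(x) < \rho_1^*(x)$ together with $p_0(x) \geq p_1(x)$ on a set of positive measure. Everything else is bookkeeping: Lemma \ref{lem:fundamental_property} does the heavy lifting, and the strict convexity of $s(\cdot,x)$ is exactly what converts the pointwise pressure-ordering given by that lemma into a density-ordering usable on the left-hand side.
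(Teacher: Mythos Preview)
Your proof is correct and essentially the same as the paper's. The paper writes $\varphi = \chi_U$ and expresses everything through $\varphi(T_{p_i}(x))$, while you phrase the same steps via preimages $T_{p_i}^{-1}(U)$; both routes reduce to Lemma~\ref{lem:fundamental_property} for the crucial inequality $T_{p_1}^{-1}(U)\subseteq T_{p_0}^{-1}(U)$ and use strict monotonicity of $\partial s(\cdot,x)$ to identify $\{\rho_0^*<\rho_1^*\}\subseteq U$ up to a null set.
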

\begin{remark}
Here the strict monotonicity of $z\mapsto \partial s(z,x)$ is understood as follows: for any $0\leq z^{(0)}<z^{(1)}$ and any $q^{(i)}\in \partial s(z^{(i)},x)$ $(i = 0,1)$, we have $q^{(0)}< q^{(1)}$.
\end{remark}
\begin{proof}
By Proposition \ref{prop:primal_dual}, we can choose
$p_i\in \Sigma(\rho_i)$, 
where $\Sigma(\rho_i)$ is the set of $c$-concave maximizers for the data $\rho_i$ as defined in \eqref{eqn: def of Sigma_rho}.
Then $\rho_i^*=T_{p_i\#}\rho_i$ and $p_i(x)\in \partial s(\rho_i^*,x)$.  Since $z\mapsto \partial s(z,x)$ is strictly increasing for $z\geq 0$,
\[
\sgn(p_1(x)-p_0(x))_+=\sgn(\rho_1^*(x)-\rho_0^*(x))_+
\] whenever $\rho_1^*(x)\neq \rho_0^*(x)$.
Therefore,
\[
\int_{\Omega} (\rho_1^*(x)-\rho_0^*(x))_+\,dx=\int_{\Omega} (\rho_1^*(x)-\rho^*_0(x))\sgn(p_1(x)-p_0(x))_+\, dx.
\]
Write $\vp(x):=\sgn(p_1(x)-p_0(x))_+$ and note that $\vp$ is simply the characteristic function of the set $U=\{x\in\Omega: p_0(x)<p_1(x)\}$.
Using $\rho_i^*=T_{p_i\#}\rho_i$, the previous line becomes
\[
\int_{\Omega} (\rho_1^*(x)-\rho_0^*(x))_+\,dx=\int_{\Omega} (\rho_1(x)-\rho_0(x))\vp(T_{p_1}(x))+\rho_0(x)(\vp(T_{p_1}(x))-\vp(T_{p_0}(x)))\, dx.
\]
Since $\vp\in \{0,1\}$,
\[
\int_{\Omega} (\rho_1(x)-\rho_0(x))\vp(T_{p_1}(x))\, dx\leq \norm{(\rho_1-\rho_0)_+}_{L^1(\Omega)}.
\]
On the other hand, Lemma \ref{lem:fundamental_property} gives $\vp(T_{p_1}(x))-\vp(T_{p_0}(x))\leq 0$, so the result follows.
\end{proof}

To prove the $L^1$-contraction we shall remove the strict monotonicity assumption of $\partial s(\cdot,x)$ by approximation.

\begin{theorem}\label{thm:l1_contraction}
Suppose $s$ satisfies \ref{assumption: convexity}-\ref{assumption: effective domain} and $c$ satisfies \ref{assumption: symmetry of c}-\ref{assumption: twist condition}. Let $\rho_0, \rho_1\in X$ satisfy \eqref{eq:good_data0}, and
\[
\rho_i^*=\argmin_{\rho\in X} J(\rho,\rho_i),
\]
where $J(\rho,\rho_i)$ is defined in \eqref{eq:primal_problem}.
Then we have
\[
\norm{(\rho_1^*-\rho_0^*)_+}_{L^1(\Omega)}\leq \norm{(\rho_1-\rho_0)_+}_{L^1(\Omega)}.
\]
\end{theorem}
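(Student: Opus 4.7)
The plan is to remove the strict monotonicity hypothesis of Lemma \ref{lem:l1_contraction} by a regularization-then-limit argument: I would perturb $s$ by a strictly convex penalty of linear growth, so that the hypotheses of Lemma \ref{lem:l1_contraction} hold for the perturbed energy, apply that lemma to the perturbed minimizers, and then pass to the limit as the perturbation vanishes.

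Concretely, set $s_\epsilon(z,x):=s(z,x)+\epsilon\phi(z)$ where $\phi:\mathbb{R}\to[0,\infty]$ equals $+\infty$ for $z<0$ and is, on $[0,\infty)$, a smooth strictly convex function with $\phi(0)=\phi'(0)=0$, $0<\phi'(z)<1$, and $\phi''(z)>0$ for $z>0$; one explicit choice is $\phi(z)=\sqrt{1+z^2}-1$. The sum rule then makes $z\mapsto \partial_z s_\epsilon(z,x)$ strictly increasing on $[0,\infty)$ for every $x$, and $s_\epsilon$ clearly inherits \ref{assumption: convexity}--\ref{assumption: effective domain} from $s$. To check that $\rho_0,\rho_1$ still satisfy \eqref{eq:good_data0} relative to $s_\epsilon$, I would observe that $\partial_z s_\epsilon(z,x)\subset \partial_z s(z,x)+[0,\epsilon)$, which, after inverting, gives $\sup\partial s_\epsilon^*(b,x)\geq \sup\partial s^*(b-\epsilon,x)$ pointwise in $x$; monotone convergence then transfers the divergent limit in \eqref{eq:good_data0} from $s$ to $s_\epsilon$.

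Proposition \ref{prop:primal_dual} applied to $s_\epsilon$ yields the unique minimizer $\rho_i^{*,\epsilon}:=\argmin_{\rho\in X} J_\epsilon(\rho,\rho_i)$, where $J_\epsilon$ denotes the primal functional associated to $s_\epsilon$. Lemma \ref{lem:l1_contraction} applied to $s_\epsilon$ then produces the regularized contraction
\[
\|(\rho_1^{*,\epsilon}-\rho_0^{*,\epsilon})_+\|_{L^1(\Omega)}\leq \|(\rho_1-\rho_0)_+\|_{L^1(\Omega)}.
\]
Testing $J_\epsilon(\cdot,\rho_i)$ against the original minimizer $\rho_i^*$ gives
\[
J_\epsilon(\rho_i^{*,\epsilon},\rho_i)\leq J_\epsilon(\rho_i^*,\rho_i)=J(\rho_i^*,\rho_i)+\epsilon\int_\Omega \phi(\rho_i^*)\,dx,
\]
and the last integral is finite because $\phi(z)\leq z$ and $\rho_i^*\in L^1(\Omega)$. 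In particular $E(\rho_i^{*,\epsilon})$ is bounded uniformly in $\epsilon$, and combined with the uniform superlinearity from \ref{assumption: effective domain} this yields equi-integrability of $\{\rho_i^{*,\epsilon}\}_\epsilon$. By Dunford--Pettis, $\rho_i^{*,\epsilon}\rightharpoonup \tilde{\rho}_i$ weakly in $L^1(\Omega)$ along a subsequence. Weak $L^1$ lower semicontinuity of $E$ together with lower semicontinuity of $C(\cdot,\rho_i)$ under narrow convergence (which follows from weak $L^1$-convergence on the bounded domain $\Omega$) combined with the bound above forces $J(\tilde{\rho}_i,\rho_i)\leq J(\rho_i^*,\rho_i)$, so uniqueness of the primal minimizer gives $\tilde{\rho}_i=\rho_i^*$, and hence the full sequence $\rho_i^{*,\epsilon}$ converges weakly to $\rho_i^*$.

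To close the argument, I would use that $f\mapsto \|f_+\|_{L^1(\Omega)}$ is convex and $1$-Lipschitz on $L^1(\Omega)$, hence weakly lower semicontinuous. Applying this to the weakly convergent difference $\rho_1^{*,\epsilon}-\rho_0^{*,\epsilon}\rightharpoonup \rho_1^*-\rho_0^*$ yields
\[
\|(\rho_1^*-\rho_0^*)_+\|_{L^1(\Omega)}\leq \liminf_{\epsilon\to 0}\|(\rho_1^{*,\epsilon}-\rho_0^{*,\epsilon})_+\|_{L^1(\Omega)}\leq \|(\rho_1-\rho_0)_+\|_{L^1(\Omega)},
\]
which is the desired contraction. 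The main obstacle I anticipate is establishing the convergence $\rho_i^{*,\epsilon}\rightharpoonup \rho_i^*$; the crucial observation that makes it tractable is that the contraction bound itself is weakly lower semicontinuous, so only weak (not strong) $L^1$-convergence of the regularized minimizers is required.
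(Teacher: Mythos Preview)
Your proposal is correct and follows essentially the same approach as the paper's proof: the paper also perturbs by $\delta(\sqrt{1+z^2}-1)$, applies Lemma \ref{lem:l1_contraction} to the perturbed problem, establishes weak $L^1$-compactness of $\rho_{i,\delta}^*$ via de la Vall\'ee-Poussin and the uniform energy bound, identifies the weak limit as $\rho_i^*$ by lower semicontinuity and uniqueness, and passes to the limit. The only cosmetic differences are that the paper tests the perturbed functional against $\rho_i$ rather than $\rho_i^*$ for the energy bound, and in the final step the paper pairs $\rho_{1,\delta}^*-\rho_{0,\delta}^*$ with the fixed test function $\chi_{\{\rho_1^*>\rho_0^*\}}\in L^\infty(\Omega)$ rather than invoking abstract weak lower semicontinuity of $f\mapsto\|f_+\|_{L^1}$; both arguments are equivalent.
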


\begin{proof}

Define
\[
s_{\delta}(z,x)=s(z,x)+\delta\big(\sqrt{1+z^2}-1\big).
\]
Obviously, $s_\delta$ satisfies \ref{assumption: convexity} and \ref{assumption: effective domain}.
Let
\[
E_{\delta}(\rho)=\int_{\Omega} s_{\delta}(\rho(x),x)\, dx.
\]
By \ref{assumption: effective domain}, $X_\delta :=\{\rho\in L^1(\Omega): E_\delta(\rho)<\infty\}$ coincides with $X$.
To verify that $\rho_i$ satisfies \eqref{eq:good_data0} with $s^*$ replaced by $s_\delta^*$, it suffices to show that for any $x\in \Omega$ and $b\in \mathbb{R}_+$,
\begin{equation}\label{eqn: sufficient condition rho_i satisfies the non-saturation condition for s_delta}
\partial s^*(b-\delta,x)\leq \partial s^*_\delta(b,x)\leq \partial s^*(b,x).
\end{equation}
Here by writing inequalities between these subdifferentials, we mean any choice of elements in these sets satisfies this inequality.
Indeed, by Lemma \ref{lem:dual_relation}, $b\in \partial s_\delta \big(\partial s_\delta^*(b,x),x\big)$.
By the way $s_\delta$ is defined, we find
\[
b-\delta \leq \sup\partial s \big(\partial s_\delta^*(b,x),x\big)\quad \mbox{ and }\quad \inf\partial s \big(\partial s_\delta^*(b,x),x\big)\leq b,
\]
which implies \eqref{eqn: sufficient condition rho_i satisfies the non-saturation condition for s_delta} by applying Lemma \ref{lem:dual_relation} again.

Let
\[
\rho_{i,\delta}^*=\argmin_{\rho\in X} E_{\delta}(\rho)+C(\rho,\rho_i).
\]
Now thanks to Lemma \ref{lem:l1_contraction}, we have
\[
\norm{(\rho_{1,\delta}^*-\rho_{0,\delta}^*)_+}_{L^1(\Omega)}\leq \norm{(\rho_1-\rho_0)_+}_{L^1(\Omega)}.
\]
For $i=0,1$, the family $\{\rho_{i,\delta}^*\}_{\delta>0}$ lies in a bounded subset of $X$, in the sense that
$$
E(\rho_{i,\delta}^*)\leq E_\delta(\rho_i)\leq E(\rho_i)+\delta\|\rho_i\|_{L^1(\Omega)}<+\infty.
$$
Thanks to de la Vall\'{e}e-Poussin's theorem on uniform integrability and conditions \ref{assumption: convexity} and \ref{assumption: effective domain} on the energy \cite{meyer_book},  the family $\rho_{i, \delta}^*$ is weakly compact in $L^1$.  Hence, without loss of generality, we can assume that $\rho_{i, \delta}^*$ converges weakly in $L^1(\Omega)$ to some limit $\tilde{\rho}_i\in L^1(\Omega)$.

  From Theorem \ref{thm:fund_ot}, it follows that the optimal transport cost $\rho\mapsto C(\rho,\rho_i)$ is lower semi-continuous with respect to $L^1(\Omega)$ weak convergence.  Furthermore, the convexity of $s$ can be used to show that $E$ is $L^1$-weakly lower semi-continuous (see for instance \cite[\S 2.B, Theorem 1]{Eva90} for the proof of a similar argument).  Therefore,
  \[
  \int_{\Omega} s(\tilde{\rho}_i(x),x)\, dx+C(\tilde{\rho}_i, \rho_i)\leq  \liminf_{\delta\to 0} \int_{\Omega} s(\rho^*_{i,\delta}(x),x)\, dx+C(\rho^*_{i,\delta}, \rho_i).
  \]
It is then clear that for any $\rho\in L^1(\Omega)$,
\[
\liminf_{\delta\to 0} \int_{\Omega} s(\rho^*_{i,\delta}(x),x)\, dx+C(\rho^*_{i,\delta}, \rho_i)\leq \liminf_{\delta\to 0} \int_{\Omega} s_{\delta}(\rho(x),x)\, dx+C(\rho, \rho_i).
\]
Since
\[
\lim_{\delta\to 0} \int_{\Omega} s_{\delta}(\rho(x),x)\, dx =\int_{\Omega} s(\rho(x),x)\, dx,
\]
we take infimum over all $\rho\in L^1(\Omega)$ and find that
\[
\int_{\Omega} s(\tilde{\rho}_i(x),x)\, dx+C(\tilde{\rho}_i, \rho_i)\leq \inf_{\rho\in L^1(\Omega)}\int_{\Omega} s(\rho(x),x)\, dx+C(\rho, \rho_i).
\]
So $\tilde{\rho}_i$ is a minimizer, and obviously $\tilde{\rho}_i\in X$.
The uniqueness of the minimizers established in Proposition \ref{prop:primal_dual} allows us to conclude that $\tilde{\rho}_i=\rho_i^*$ a.e..
Finally, 
\[
\begin{split}
\norm{(\rho_{1}^*-\rho_{0}^*)_+}_{L^1(\Omega)} = &\;\lim_{\delta\to 0} \int_\Omega (\rho_{1,\delta}^*-\rho_{0,\delta}^*)\chi_{\{\rho_1^*>\rho_0^*\}}(x)\,dx\\
\leq &\;\liminf_{\delta\to 0}\norm{(\rho_{1,\delta}^*-\rho_{0,\delta}^*)_+}_{L^1(\Omega)}\leq \norm{(\rho_1-\rho_0)_+}_{L^1(\Omega)}.
\end{split}
\]
\end{proof}

In Lemma \ref{lem:big_max_unique}, we have shown that there always exists the largest $c$-concave maximizing pressure of the dual problem.
The following lemma states that we can always find the smallest $c$-concave maximizing pressure as well.

\begin{lemma}\label{lem:pressure_special}
If $\bar{\rho}\in X$ and satisfies \eqref{eq:good_data0}, then there exists a unique $p^*\in \Sigma (\bar{\rho})$ such that $p^*\leq \tilde{p}$ for any $\tilde{p}\in  \Sigma(\bar{\rho})$.
In other words, $p^*$ is the smallest $c$-concave maximizer of the dual problem.
\end{lemma}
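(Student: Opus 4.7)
The plan is to follow the pattern of Lemma~\ref{lem:big_max_unique}, replacing $\max$ by $\min$ throughout and augmenting with one application of the $c$-concave envelope. The main difficulty---and the key closure property to establish---is: for any $p_0, p_1 \in \Sigma(\bar{\rho})$, writing $r := \min(p_0, p_1)$, the function $r^{c\bar{c}}$ again lies in $\Sigma(\bar{\rho})$ and satisfies $r^{c\bar{c}} \leq \min(p_0, p_1)$. The subtlety is that the pointwise minimum of $c$-concave functions is not in general $c$-concave, so one must pass to its $c$-concave envelope and check that no dual value is lost in that projection.

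To prove this closure property, let $\rho^*$ denote the unique primal minimizer from Proposition~\ref{prop:primal_dual}. By Theorem~\ref{thm:fund_ot}, $T_{p_0}$ and $T_{p_1}$ are both optimal maps from $\bar{\rho}$ to $\rho^*$, so they coincide $\bar{\rho}$-a.e.; call their common value $T$. Directly, $r^c = \min(p_0^c, p_1^c)$, and for $\bar{\rho}$-a.e.\;$y$ the identity $p_i^c(y) = p_i(T(y)) + c(T(y),y)$ yields $r^c(y) = r(T(y)) + c(T(y),y)$. Integrating against $\bar{\rho}$ and using $T_{\#}\bar{\rho} = \rho^*$,
\[
\int_{\Omega} r^c(y)\bar{\rho}(y)\,dy \;=\; \int_{\Omega} r(x)\rho^*(x)\,dx + C(\bar{\rho}, \rho^*).
\]
By Proposition~\ref{prop:primal_dual}, $p_0(x), p_1(x) \in \partial s(\rho^*(x), x)$ for a.e.\;$x$; since this subdifferential is an interval, $r(x) \in \partial s(\rho^*(x), x)$ a.e.\;too, and Lemma~\ref{lem:dual_relation} gives $E^*(r) = \int r\rho^*\,dx - E(\rho^*)$. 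Combining, $J^*(r, \bar{\rho}) = E(\rho^*) + C(\bar{\rho}, \rho^*) = J(\rho^*, \bar{\rho})$, so $r$ (though not a priori $c$-concave) maximizes $J^*(\cdot, \bar{\rho})$. Since $r^{c\bar{c}} \leq r$ (Lemma~\ref{lem:ccc}), monotonicity of $E^*$ via Lemma~\ref{lem:s_star_increasing} yields $E^*(r^{c\bar{c}}) \leq E^*(r)$, while $(r^{c\bar{c}})^c = r^c$ from Lemma~\ref{lem:ccc} leaves the transport term unchanged; hence $J^*(r^{c\bar{c}}, \bar{\rho}) \geq J^*(r, \bar{\rho})$, so $r^{c\bar{c}} \in \Sigma(\bar{\rho})$.

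With the closure property in hand, set $\underline{p}(x) := \inf_{\tilde{p} \in \Sigma(\bar{\rho})} \tilde{p}(x)$. First, $\Sigma(\bar{\rho})$ is uniformly bounded in $L^\infty(\Omega)$: the upper bound comes from Lemma~\ref{lem:big_max_unique}, and the lower bound follows from a shift-to-contradiction argument along the lines of Proposition~\ref{prop:primal_dual}---if some $\tilde{p}_n \in \Sigma(\bar{\rho})$ had mean $|\Omega|^{-1}\int \tilde p_n \to -\infty$, uniform Lipschitz continuity (Lemma~\ref{lem:lip}) would force $\tilde{p}_n \to -\infty$ uniformly, making $\int_\Omega \bar{\rho}\,\tilde{p}_n^c \to -\infty$ while $E^*(\tilde{p}_n) \to 0$ via Lemma~\ref{lem:s_star_increasing} and dominated convergence, contradicting maximality. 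Thus $\underline{p}$ is finite, and as the infimum of a uniformly Lipschitz family it is itself Lipschitz with the same constant. Fix a countable dense sequence $\{x_k\}_{k \geq 1}$ in $\Omega$ and select $\tilde{p}_{N,k} \in \Sigma(\bar{\rho})$ with $\tilde{p}_{N,k}(x_k) \leq \underline{p}(x_k) + 1/N$. Iterate the closure property via $q_1 := \tilde{p}_{N,1}$ and $q_{k+1} := (\min(q_k, \tilde{p}_{N,k+1}))^{c\bar{c}}$ to obtain $q_N \in \Sigma(\bar{\rho})$ with $q_N \leq \tilde{p}_{N,k}$ for every $k \leq N$; hence $\underline{p} \leq q_N$ and $q_N(x_k) \leq \underline{p}(x_k) + 1/N$ for all $k \leq N$. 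Following the final uniform-convergence argument in Lemma~\ref{lem:big_max_unique}, uniform Lipschitz continuity and density of $\{x_k\}$ give $q_N \to \underline{p}$ uniformly on $\Omega$. Uniform convergence preserves $c$-concavity and, by continuity of $J^*$ on uniformly bounded families, preserves the maximal dual value, so $\underline{p} \in \Sigma(\bar{\rho})$; uniqueness is automatic.
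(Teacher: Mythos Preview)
Your proof is correct but takes a genuinely different route from the paper's. The paper regularizes the dual energy via $s_k^*(b,x) := s^*(b,x) + \frac{1}{k}\ln(1+e^b)$, so that strict convexity forces a unique $c$-concave maximizer $p_k^*$; it then shows (using Lemma~\ref{lem:fundamental_property}) that $p_k^*$ is increasing in $k$ and bounded above by every $\tilde{p}\in\Sigma(\bar{\rho})$, and obtains $p^*$ as the uniform limit $p_k^*\nearrow p^*$. You instead establish directly a lattice-type closure property---that $(\min(p_0,p_1))^{c\bar{c}}\in\Sigma(\bar{\rho})$ for any $p_0,p_1\in\Sigma(\bar{\rho})$---and then rerun the density/uniform-convergence endgame from Lemma~\ref{lem:big_max_unique}. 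Your argument is more self-contained and avoids the somewhat lengthy verification that the regularized $s_k$ still satisfies \ref{assumption: convexity}--\ref{assumption: effective domain}. On the other hand, the paper's regularization buys an explicit monotone approximating sequence $p_k^*\nearrow p^*$, which is reused later (in the proof of Theorem~\ref{thm: direct comparison of rho and p}) to obtain the comparison $p_0^-\leq p_1^-$ when $\partial s^*(\cdot,x)$ is not strictly increasing; with your approach that step would need a separate argument.
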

\begin{proof}
For any $k\in\ZZ_+$, define $s_{k}^*(b,x):=s^*(b,x)+\frac{1}{k}\ln(1+e^b)$ and
\[
J^*_{k}(p,\bar{\rho}):=\int_{\Omega} p^c(x)\bar{\rho}(x)-s_{k}^*(p(x),x)\, dx.
\]
Let
$$
s_k(z,x):=\sup_{p\in \mathbb{R}}pz - s_k^*(p,x).
$$
It is clear that for all $k\in \ZZ_+$, $s_k(z,x)\leq s(z,x)$ satisfies \ref{assumption: convexity}.
We claim that $s_k(z,x)$ also satisfies \ref{assumption: effective domain}.
Indeed,  by Lemma \ref{lem:s_star_increasing}, $s^*(\cdot,x)$ is non-negative and increasing, and so is $s^*_k(\cdot,x)$.
For $z = 0$,
$$
s_k(0,x)=\sup_{p\in \mathbb{R}} - s_k^*(p,x) \leq -\inf_{p\in \mathbb{R}} s^*(p,x)-\inf_{p\in \mathbb{R}} \frac{1}{k}\ln(1+e^{p}) = s(0,x)=0.
$$
On the other hand,
\[
s_k(0,x)\geq \lim_{p\to -\infty} -s_k^*(p,x)=0.
\]
When $z<0$,
$$
\lim_{p\to -\infty} pz - s_k^*(p,x) = \lim_{p\to -\infty} pz + \lim_{p\to -\infty} -s_k^*(p,x)
=+\infty.
$$
Note that the second limit is $0$ thanks to the argument above for the case $z=0$.
For $z>0$,
\[
\begin{split}
s_k(z,x) \geq &\;\max\left\{\sup_{p\leq 0}pz - s_k^*(0,x),\;\sup_{p> 0}pz - s_k^*(p,x)\right\}\\
\geq &\;\max\left\{ - s^*(0,x)-\frac{\ln 2}{k},\;\sup_{p> 0}pz - s^*(p,x)-\frac{p+1}{k}\right\}\\
\geq  &\;\max\left\{ \inf_{(z,x)} s(z,x)-\frac{\ln 2}{k},\;\sup_{p> 0}p\left(z-\frac{1}{k}\right) - s^*(p,x)-\frac{1}{k}\right\}.
\end{split}
\]
The first term in the last line implies $\inf_{(z,x)}s_k(z,x)>-\infty$.
We also observe that, when $\tilde{z}:=z-\frac{1}{k}>0$ is sufficiently large, for all $x\in \Omega$,
$$
s(\tilde{z},x) = \sup_{p>0} p\tilde{z} -s^*(p,x).
$$
Indeed, by \ref{assumption: effective domain}, the left hand side is positive throughout $\Omega$ whenever $\tilde{z}$ is sufficiently large, while
\[
\sup_{p\leq 0} p\tilde{z} -s^*(p,x)\leq 0.
\]
Therefore, for $z\gg 1$,
\[
s_k(z,x)
\geq s\left(z-\frac{1}{k},x\right)-\frac{1}{k}.
\]
This shows $s_k(\cdot,x)$ has the uniform-in-$x$ superlinear growth in \ref{assumption: effective domain}.

Let
$$
E_k(\rho) =\int_{\Omega}s_k(\rho(x),x)\,dx.
$$
Then $E_k(\bar{\rho})\leq E(\bar{\rho})<+\infty$ since $s_k(z,x)\leq s(z,x)$.
Moreover,
\[
0<\int_{\Omega}\bar{\rho}(x)\,dx<\lim_{b\to\infty} \int_{\Omega} \sup\partial s^*_{k}(b,x)\, dx.
\]
Since $s_{k}^*(\cdot,x)$ is strictly convex, Proposition \ref{prop:primal_dual} implies that $J^*_{k}(p,\bar{\rho})$ has a unique $c$-concave maximizer $p_{k}^*$ for all $k\in\ZZ_+$.

Now we will show that $p_{k}^*$ is pointwise increasing with respect to $k$. Given $k_0<k_1$, let $U=\{x\in\Omega: p^*_{k_1}(x)< p^*_{k_0}(x)\}$ and let $\phi$ be the characteristic function of $U$.  The optimality of the $p_{k_i}^*$ implies that there exists $\eta_i(x)\in \partial s^*_{k_i}\big(p_{k_i}^*(x),x\big)$ such that
\[
\int_{\Omega} \phi\big(T_{p^*_{k_i}}(x)\big)\bar{\rho}(x)\, dx=\int_U \eta_i(x)\, dx.
\]
Hence, 
\begin{equation}\label{eq:delta_compare}
\int_{\Omega} \Big(\phi\big(T_{p^*_{k_1}}(x)\big)-\phi\big(T_{p^*_{k_0}}(x)\big)\Big)\bar{\rho}(x)\, dx=\int_U (\eta_1(x)-\eta_0(x))\,dx.
\end{equation}
Thanks to Lemma \ref{lem:fundamental_property}, the left hand side of \eqref{eq:delta_compare} is nonnegative.  On the other hand, from the subdifferential condition $\eta_i(x)\in \partial s^*_{k_i}\big(p_{k_i}^*(x),x\big)$, it follows that $\eta_1(x)<\eta_0(x)$ for all $x\in U$.  Thus, \eqref{eq:delta_compare} can only hold if $U$ has measure zero.  Since the $p_{k_i}^*$ are Lipschitz, we can then conclude that $p^*_{k_0}\leq p^*_{k_1}$ everywhere.   Note an identical argument shows that $p_k^*\leq \tilde{p}$ for any $k\in \ZZ_+$ and any maximizing pressure $\tilde{p}\in \Sigma(\bar{\rho})$. 

Let $p^*(x):=\lim_{k\to\infty} p_k^*(x)$, and we note that $p_k^*$ must converge uniformly to $p^*$ since they are uniformly Lipschitz (by Lemma \ref{lem:lip}) and uniformly bounded (see the proof of Proposition \ref{prop:primal_dual}).  We also have
 \[
(p^*)^{c\bar{c}}\geq \lim_{k\to\infty} (p_k^*)^{c\bar{c}}=\lim_{k\to\infty} p_k^*=p^*,
\]
so $p^*$ is $c$-concave.
Now take an arbitrary maximizing pressure $\tilde{p}\in  \Sigma(\bar{\rho})$, 
which is bounded and Lipschitz. 
Since the $c$-transform and $-E^*$ are upper semi-continuous with respect to the  pointwise convergence, we get the string of inequalities
  \[
J^*(p^*,\bar{\rho})\geq \limsup_{k\to\infty} J^*(p^*_k,\bar{\rho})\geq \limsup_{k\to\infty} J^*_k(p^*_k,\bar{\rho})\geq \lim_{k\to\infty}  J^*_k(\tilde{p},\bar{\rho})= J^*(\tilde{p},\bar{\rho}).
\]
Thus, $p^*\in  \Sigma(\bar{\rho})$.
That $p^*$ is the smallest possible $c$-concave maximizer follows from our earlier observation that $p_k^*\leq \tilde{p}$ and the pointwise convergence $p_k^*\to p^*$. 
\end{proof}

Now we are ready to prove the discrete comparison principle.

\begin{theorem} \label{thm: direct comparison of rho and p}
Under the assumptions of Theorem~\ref{thm:l1_contraction}, suppose $\rho_0\leq \rho_1$ a.e.\;in $\Omega$.
Then
\begin{enumerate}
\item $\rho_0^*\leq \rho_1^*$ a.e.\;in $\Omega$.

\item
Let $\Sigma(\rho_i)$ be defined as in \eqref{eqn: def of Sigma_rho}.
For $i = 0,1$, let $p_i^+\in \Sigma(\rho_i)$ 
be the largest $c$-concave maximizers constructed in Lemma \ref{lem:big_max_unique}.
Then $p_0^+\leq p_1^+$ in $\Omega$.

\item
Alternatively, let $p_i^-\in \Sigma(\rho_i)$ be the smallest $c$-concave maximizers constructed in Lemma \ref{lem:pressure_special}.
Then we also have $p_0^-\leq p_1^-$ in $\Omega$.
\end{enumerate}
\end{theorem}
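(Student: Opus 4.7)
Part (1) is immediate from the $L^1$-contraction: Theorem~\ref{thm:l1_contraction} gives $\|(\rho_1^*-\rho_0^*)_+\|_{L^1}\le\|(\rho_1-\rho_0)_+\|_{L^1}$, and since optimal transport preserves mass, $\int(\rho_1^*-\rho_0^*)=\int(\rho_1-\rho_0)=\int(\rho_1-\rho_0)_+$ (using $\rho_0\le\rho_1$), forcing $\int(\rho_1^*-\rho_0^*)_-=0$ and hence $\rho_0^*\le\rho_1^*$ a.e. For Part (3), the plan is to regularize as in Lemma~\ref{lem:pressure_special} via $s_k^*(b,x):=s^*(b,x)+\tfrac{1}{k}\ln(1+e^b)$, which is strictly convex so that the $c$-concave maximizer of the dual problem for either datum $\rho_i$ in the $s_k$-framework is unique, call it $p_{k,i}$. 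Applying Part (1) to $s_k$ (which satisfies \ref{assumption: convexity}-\ref{assumption: effective domain}) gives $\rho_{0,k}^*\le\rho_{1,k}^*$ a.e.; strict monotonicity of $\partial s_k^*(\cdot,x)$ then turns the open set $\{p_{k,0}>p_{k,1}\}$ into a region where $\rho_{0,k}^*>\rho_{1,k}^*$, a contradiction unless the set is empty. Hence $p_{k,0}\le p_{k,1}$ pointwise, and the uniform convergence $p_{k,i}\to p_i^-$ established in the proof of Lemma~\ref{lem:pressure_special} yields $p_0^-\le p_1^-$.

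Part (2) is the delicate one. Inspired by Lemma~\ref{lem:big_max_unique}, I set $q:=\max(p_0^+,p_1^+)$ and aim to show $q\in\Sigma(\rho_1)$; then by the maximality of $p_1^+$ we get $q\le p_1^+$, i.e.\;$p_0^+\le p_1^+$. The function $q$ is $c$-concave for the same reason as in Lemma~\ref{lem:big_max_unique}. Put $U:=\{p_0^+>p_1^+\}$, which is open by the Lipschitz regularity of $c$-concave functions (Lemma~\ref{lem:lip}). On $U$, monotonicity of $\partial s^*(\cdot,x)$ combined with Part (1) squeezes $\rho_0^*=\rho_1^*$ a.e., and the duality $p_i^+\in\partial s(\rho_i^*,\cdot)$ then yields $q(x)\in\partial s(\rho_1^*(x),x)$ a.e.\;on all of $\Omega$. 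The key remaining step is $T_{q\#}\rho_1=\rho_1^*$: Lemma~\ref{lem:fundamental_property} provides $T_{p_0^+}^{-1}(U)\subset T_{p_1^+}^{-1}(U)$, and tallying pushforward masses using $T_{p_i^+\#}\rho_i=\rho_i^*$ together with $\rho_0\le\rho_1$ and $\rho_0^*=\rho_1^*$ on $U$ forces $\rho_0=\rho_1$ a.e.\;on $T_{p_0^+}^{-1}(U)$ and $\rho_1\equiv 0$ a.e.\;on $T_{p_1^+}^{-1}(U)\setminus T_{p_0^+}^{-1}(U)$. A short pointwise comparison using the definitions $q=p_0^+$ on $U$ and $q=p_1^+$ on $\Omega\setminus U$ then identifies $T_q=T_{p_0^+}$ on $T_{p_0^+}^{-1}(U)$ and $T_q=T_{p_1^+}$ on $\Omega\setminus T_{p_1^+}^{-1}(U)$; since $\rho_1$ vanishes on the middle slab, the pushforwards assemble to $T_{q\#}\rho_1=\rho_1^*$. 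Proposition~\ref{prop:primal_dual} then certifies $q\in\Sigma(\rho_1)$.

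The main obstacle is the transport bookkeeping in Part (2). Because the two problems have different data, the preimages $T_{p_0^+}^{-1}(U)$ and $T_{p_1^+}^{-1}(U)$ are in general distinct, so the naive hope that $T_q$ agrees with $T_{p_0^+}$ (or $T_{p_1^+}$) on all of $T_{p_0^+}^{-1}(U)$ (or $T_{p_1^+}^{-1}(U)$) is too optimistic, and on the middle slab $T_{p_1^+}^{-1}(U)\setminus T_{p_0^+}^{-1}(U)$ the pointwise identity for $T_q$ is genuinely ambiguous. The crucial, perhaps surprising, point is that the contraction established in Part (1), the monotonicity of $\partial s^*$, the ordering $\rho_0\le\rho_1$, and mass conservation together force $\rho_1$ to vanish precisely on this slab, so that its contribution to the pushforward is zero regardless of how $T_q$ behaves there. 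This ``mass-squeezing'' onto $T_{p_0^+}^{-1}(U)$ is the real analytic heart of the comparison principle for the largest maximizer.
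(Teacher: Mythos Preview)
Your proof is correct and follows the paper's overall architecture: Part~(1) via the contraction, Part~(3) via the strictly convex regularization of Lemma~\ref{lem:pressure_special}, and Part~(2) by showing $q:=\max(p_0^+,p_1^+)\in\Sigma(\rho_1)$. The execution of the key step $T_{q\#}\rho_1=\rho_1^*$ in Part~(2), however, is organized differently from the paper and is worth contrasting. The paper never explicitly pins down $\rho_0,\rho_1$ on the preimages; instead it works on the \emph{target} side, first proving $\tilde\rho_1:=T_{q\#}\rho_1\ge\rho_1^*$ on $U$ (by comparing $\phi(T_q)$ with $\phi(T_{p_0^+})$ and using $\rho_0\le\rho_1$), then $\tilde\rho_1(U)\le\rho_1^*(U)$ (by comparing $\phi(T_q)$ with $\phi(T_{p_1^+})$), then the analogous inequality on $\Omega\setminus U$, and closes with mass conservation. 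Your route instead extracts \emph{source}-side constraints first---the mass tally $\int_{T_{p_0^+}^{-1}(U)}\rho_0=\int_{T_{p_1^+}^{-1}(U)}\rho_1$ together with $\rho_0\le\rho_1$ and $T_{p_0^+}^{-1}(U)\subset T_{p_1^+}^{-1}(U)$ forces $\rho_0=\rho_1$ on $T_{p_0^+}^{-1}(U)$ and $\rho_1\equiv 0$ on the middle slab---and then assembles $T_{q\#}\rho_1$ directly. This is a genuine (and arguably cleaner) alternative: it makes transparent exactly why the ambiguity of $T_q$ on $T_{p_1^+}^{-1}(U)\setminus T_{p_0^+}^{-1}(U)$ is harmless, whereas in the paper this fact is implicit in the chain of inequalities. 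A minor remark on Part~(1): you can skip the mass-conservation detour by applying Theorem~\ref{thm:l1_contraction} with the roles of $\rho_0,\rho_1$ swapped, which gives $\|(\rho_0^*-\rho_1^*)_+\|_{L^1}\le\|(\rho_0-\rho_1)_+\|_{L^1}=0$ immediately.
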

\begin{proof}
That $\rho_0^*\leq \rho_1^*$ a.e.\;in $\Omega$ is an immediate consequence of Theorem \ref{thm:l1_contraction}.

To show the inequality between $p_i^\pm$, we first note that by Lemma \ref{lem:dual_relation}, the subdifferential relation $\rho_i^*(x)\in \partial s^*(p_i^\pm(x),x)$ a.e.\;implies that $p^\pm_i(x)\in \partial s(\rho_i^*(x),x)$ a.e..

\medskip

We first prove $p_0^+\leq p_1^+$.
It suffices to show that
$$
\tilde{p}^+:=\max(p_0^+, p_1^+)\in \Sigma(\rho_1). 
$$
Indeed, by the maximality of $p^+_1$, this implies $\tilde{p}^+=p^+_1$ and thus $p_0^+\leq p_1^+$.
Clearly, $\tilde{p}^+$ is $c$-concave since
\[
(\tilde{p}^+)^{c\bar{c}}\geq\max\big((p_0^+)^{c\bar{c}},(p_1^+)^{c\bar{c}}\big)=\tilde{p}^+.
\]
Let
\[
U:=\{x\in\Omega: p_1^+(x)<p_0^+(x)\}.
\]
Since $\partial s(\cdot,x)$ is increasing and $\rho_0^*\leq \rho_1^*$, we must have $\rho_0^*=\rho_1^*$ a.e.\;on $U$.   Hence, it follows that
\begin{equation}\label{dual}
\rho_1^*(x)\in \partial s^*(\tilde{p}^+(x),x)\quad \mbox{ a.e.\;}x\in \Omega.
\end{equation}
If we can show that
\begin{equation}\label{main000}
\tilde{\rho}_1:= T_{\tilde{p}^+\#}\rho_1 = \rho_1^*\quad \mbox{ a.e.\;on }\Omega,
\end{equation}
then together with \eqref{dual} it follows that any infinitesimal variation of $J^*(p,\rho_1)$ over $p$ at $\tilde{p}^+$ will not make its value increase, and we conclude from the concavity of the energy that $\tilde{p}^+$ is a maximizer, so $\tilde{p}^+\in \Sigma(\rho_1)$.

Using the same logic as Lemma \ref{lem:fundamental_property}, we can show that if $T_{p_0^+}(x)\in U$ then $T_{\tilde{p}^+}(x)\in U$.  Furthermore, since $\tilde{p}^+$ and $p_0^+$ agree on $U$, \eqref{eq:map_argmin} implies that $T_{\tilde{p}^+}(x)=T_{p_0^+}(x)$ when $T_{p_0^+}(x)\in U$.
Define $\phi$ to be the characteristic function of $U$.
Then for any non-negative smooth function $f:\Omega\to[0,\infty)$
\[
\begin{split}
\int_{U} \tilde{\rho}_1(x)f(x)\, dx=&\;\int_{\Omega} \rho_1(x)\phi(T_{\tilde{p}^+}(x))f(T_{\tilde{p}^+}(x))\, dx\\
\geq&\; \int_{\Omega} \rho_0(x)\phi(T_{p_0^+}(x))f(T_{p_0^+}(x))\, dx=\int_U \rho_0^*(x)f(x)\, dx.
\end{split}
\]
Hence, $\rho_1^* =\rho_0^* \leq \tilde{\rho}_1$ a.e.\;in $U$.

On the other hand, using Lemma \ref{lem:fundamental_property} again, we see that if $T_{p_1^+}(x)\notin U$, then $T_{\tilde{p}^+}(x)\notin U$.
This implies $\phi(T_{\tilde{p}^+}(x))\leq \phi(T_{p_1^+}(x))$.
Therefore,
\[
\int_U \tilde{\rho}_1(x)\, dx=\int_{\Omega} \rho_1(x)\phi\big(T_{\tilde{p}^+}(x)\big)\, dx\leq \int_{\Omega} \rho_1(x)\phi\big(T_{p_1^+}(x)\big)\, dx=\int_U \rho_1^*(x)\,dx.
\]
Hence $\tilde{\rho}_1(U)\leq \rho_1^*(U)$. Combining this with the above fact that $\rho_1^*(x)\leq \tilde{\rho}_1(x)$ for a.e.\;$x\in U$, we must have $\tilde{\rho}_1=\rho_1^*$ a.e.\;on $U$.

Using the same logic from above, we must have $T_{\tilde{p}^+}(x)=T_{p_1^+}(x)$ when $T_{p_1^+}(x)\notin U$, since $\tilde{p}^+=p_1^+$ on $\Omega\setminus U$.  Therefore,
\[
\begin{split}
\int_{\Omega\setminus U} \tilde{\rho}_1(x)f(x)\, dx=&\;\int_{\Omega} \big(1-\phi\big(T_{\tilde{p}^+}(x)\big)\big)\rho_1(x)f\big(T_{\tilde{p}^+}(x)\big)\, dx\\
\geq &\;\int_{\Omega} \big(1-\phi\big(T_{p_1^+}(x)\big)\big)\rho_1(x)f\big(T_{p_1^+}(x)\big)\, dx=\int_{\Omega\setminus U} \rho_1^*(x)f(x)\, dx.
\end{split}
\]
Hence, $\tilde{\rho}_1\geq \rho_1^*$ a.e.\;in $\Omega\setminus U$.   Since $\tilde{\rho}_1= \rho_1^*$ a.e.\;on $U$,  conservation of mass then allows us to conclude \eqref{main000}.
This proves $\tilde{p}^+\in \Sigma(\rho_1)$ and thus $p_0^+\leq p_1^+$ in $\Omega$.

\medskip

Next we show $p_0^-\leq p_1^-$.
Suppose $b\mapsto \partial s^*(b,x)$ is strictly increasing for a.e.\;$x\in \Omega$.
Then by Lemma \ref{lem:dual_relation}, for a.e.\;$x\in \Omega$, $\partial s(z,x)$ only contains one element for all $z$ in $\partial s^*(\RR,x):=\cup_{b\in \RR}\partial s^*(b,x)$.
Since $\rho_0^*\leq \rho_1^*$ a.e.\;and $\partial s(\cdot,x)$ is increasing, we find $p_0^-(x)\leq p_1^-(x)$ a.e..
This further implies $p_0^-(x)\leq p_1^-(x)$ everywhere since $p_i^-$ are Lipschitz. (In fact, in this case, the maximizing pressure is unique.)

If $b\mapsto \partial s^*(b,x)$ is not strictly increasing for almost all $x$, we can apply the argument in Lemma \ref{lem:pressure_special} to construct a sequence of $s_k^*(b,x)$ that have $\partial s_k^*(b,x)$ strictly increasing, and two sequences of ordered maximizing pressures $p^*_{0,k}(x)\leq p_{1,k}^*(x)$ with $p_{i,k}^*$ converging uniformly to the smallest $c$-concave maximizer $p_i^-$.
As a result, $p_0^-\leq p_1^-$.
\end{proof}

We conclude this section with a lemma providing construction of a family of stationary densities that will serve as stationary barriers.
Later, we will see that they can give uniform bounds on the discrete densities and pressures over iterations.
We will assume \ref{assumption: convexity}-\ref{assumption: inf sup ratio of partial s*}.
Since in this case $s^*(\cdot,x)$ will be differentiable, instead of the subdifferential $\partial s^*(\cdot,x)$, we shall write the partial derivative of $s^*$ with respect to the first variable as $\partial_p s^*(\cdot,x)$.

\begin{lemma}\label{lem:lower_bound}
Suppose that $s$ satisfies \ref{assumption: convexity}-\ref{assumption: inf sup ratio of partial s*}.  For any $\lambda$ satisfying
\begin{equation}\label{eqn: condition on admissible total mass}
0<\lambda< \lim_{b\to\infty} \int_{\Omega} \partial_p s^*(b,x)\, dx,
\end{equation}
the variational problem
\begin{equation}\label{eq:fixed_mass_problem}
\inf_{\rho\in X\atop\int_{\Omega} \rho(x)\, dx=\lambda} E(\rho)
\end{equation}
admits a unique (in the a.e.\;sense) minimizer $\rho_\lambda$.
It has the following properties:
\begin{enumerate}

\item \label{property: minimal pressure for static density}
There exists a minimal $\alpha_{\lambda}\in \RR$ such that $\rho_{\lambda}=\partial_p s^*(\alpha_{\lambda}, x)$ a.e.. By minimality, we mean that if some $\alpha\in\RR$ satisfies $\rho_\lambda = \partial_p s^*(\alpha,x)$ a.e., then we must have $\alpha\geq \alpha_\lambda$.

\item $\rho_{\lambda}$ is non-decreasing with respect to $\lambda$.

\item There exists $0<a_{\lambda}<b_{\lambda}<\infty$ such that $\rho_{\lambda}(x) \in [a_{\lambda}, b_{\lambda}]$ for almost all $x\in\Omega$.
\item\label{property: comparison with static densities}
Given $\bar{\rho}\in X$ satisfying \eqref{eq:good_data0}, let
\[
\rho^*:=\argmin_{\rho\in X} J(\rho, \bar{\rho}),
\]
and let
\[
p^* \in \argmax_{p\in X^*, \,p^{c\bar{c}}=p} J^*(p,\bar{\rho})
\]
be the smallest $c$-concave maximizing pressure as is constructed in Lemma \ref{lem:pressure_special}.
If for some $\lambda$ satisfying \eqref{eqn: condition on admissible total mass}, we have $\bar{\rho}\geq \rho_{\lambda}$ (resp.\;$\bar{\rho}\leq \rho_\lambda$) almost everywhere,
then $\rho^*\geq \rho_{\lambda}$ (resp.\;$\rho^*\leq \rho_\lambda$) almost everywhere and $p^*\geq \alpha_{\lambda}$ (resp.\;$p^*\leq \alpha_\lambda$).
\end{enumerate}
\end{lemma}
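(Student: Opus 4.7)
The plan is to construct $\alpha_\lambda$ and $\rho_\lambda$ explicitly from the mass function $\Lambda(\alpha):=\int_\Omega \partial_p s^*(\alpha,x)\,dx$ and then verify the four properties using convex duality (Lemma \ref{lem:dual_relation}) and the comparison principles proved earlier. By convexity of $s^*$ and assumption \ref{assumption: regularity of s*}, $\partial_p s^*(\cdot,x)$ is non-decreasing and continuous, so $\Lambda$ is non-decreasing, and dominated convergence (using the monotone bound on any interval where $\Lambda$ is finite) makes it continuous. Assumption \ref{assumption: asymptotics of s*} combined with the non-negativity of $\partial_p s^*$ coming from Lemma \ref{lem:s_star_increasing} gives $\Lambda(\alpha)\to 0$ as $\alpha\to-\infty$, while the hypothesis \eqref{eqn: condition on admissible total mass} gives $\lim_{\alpha\to+\infty}\Lambda(\alpha)>\lambda$. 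I would set $\alpha_\lambda:=\inf\{\alpha\in\RR:\Lambda(\alpha)\geq\lambda\}$, which is finite with $\Lambda(\alpha_\lambda)=\lambda$, and define $\rho_\lambda(x):=\partial_p s^*(\alpha_\lambda,x)$. Since $\alpha_\lambda\in\partial s(\rho_\lambda(x),x)$ a.e.\;by Lemma \ref{lem:dual_relation}, the subgradient inequality integrated against any competitor $\rho$ with $\int\rho=\lambda$ yields $E(\rho)\geq E(\rho_\lambda)$; uniqueness follows because the equality case forces $\rho(x)\in\partial s^*(\alpha_\lambda,x)=\{\rho_\lambda(x)\}$, the singleton coming from differentiability of $s^*$. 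Property (1)'s minimality and property (2)'s monotonicity are then built into the construction, and property (3) uses \ref{assumption: inf sup ratio of partial s*}: since $\Lambda(\alpha_\lambda)=\lambda>0$, $b_\lambda:=\esssup_x\rho_\lambda(x)>0$, and then $a_\lambda:=\essinf_x\rho_\lambda(x)\geq b_\lambda/M_{\alpha_\lambda}>0$, with $b_\lambda\leq M_{\alpha_\lambda}\lambda/|\Omega|<\infty$.

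For the barrier comparison (4), I would first argue that $\rho_\lambda$ is stationary for the JKO step. Since the cost $C$ is only defined between measures of equal mass and $C(\rho_\lambda,\rho_\lambda)=0$, the minimality of $E(\rho_\lambda)$ over mass-$\lambda$ densities gives $E(\rho)+C(\rho,\rho_\lambda)\geq E(\rho_\lambda)$ for every admissible $\rho$, so $\rho_\lambda=\argmin_\rho J(\rho,\rho_\lambda)$ by Proposition \ref{prop:primal_dual}. The constant function $\alpha_\lambda$ is $c$-concave and belongs to $\partial s(\rho_\lambda(x),x)$ a.e., hence $\alpha_\lambda\in\Sigma(\rho_\lambda)$. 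The key step is to identify $\alpha_\lambda$ as the smallest element of $\Sigma(\rho_\lambda)$: Proposition \ref{prop:maximum_principle}(2) applied with $\rho^*=\bar\rho=\rho_\lambda$ forces every $c$-concave maximizer to be a constant lying in some interval $[a',b']$, and for any $\alpha<\alpha_\lambda$ the minimality in property (1) gives $\partial_p s^*(\alpha,x)<\rho_\lambda(x)$ on a positive-measure set, which by differentiability of $s^*$ and Lemma \ref{lem:dual_relation} means $\alpha\notin\partial s(\rho_\lambda(x),x)$ there --- ruling out $\alpha$ as a maximizer and forcing $a'=\alpha_\lambda$.

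Finally, property (4) follows by applying Theorem \ref{thm: direct comparison of rho and p} with $(\rho_0,\rho_1)=(\rho_\lambda,\bar\rho)$ when $\bar\rho\geq\rho_\lambda$ (and with the roles swapped when $\bar\rho\leq\rho_\lambda$): the density comparison gives $\rho^*\geq\rho_\lambda$ (resp.\;$\leq$), and part (3) of that theorem yields $\alpha_\lambda=p_0^-\leq p_1^-=p^*$ (resp.\;$\geq$) since $\alpha_\lambda$ has just been identified as the smallest $c$-concave maximizer for $\rho_\lambda$. I expect the main obstacle to be precisely this identification of $\alpha_\lambda$ as the smallest $c$-concave maximizer: it couples the minimality baked into the construction of $\alpha_\lambda$ with Proposition \ref{prop:maximum_principle}, the differentiability of $s^*$ from \ref{assumption: regularity of s*}, and the duality relation of Lemma \ref{lem:dual_relation}. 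The remaining verifications --- existence and uniqueness of $\rho_\lambda$, the $[a_\lambda,b_\lambda]$ bounds, and stationarity --- are relatively standard once the construction is in place.
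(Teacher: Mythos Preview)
Your proposal is correct and follows essentially the same route as the paper: construct $\alpha_\lambda$ from the mass function $\Lambda$, verify that $\rho_\lambda=\partial_p s^*(\alpha_\lambda,\cdot)$ minimizes via the subgradient inequality, read off properties (1)--(3), show $\rho_\lambda$ is stationary for the JKO step, identify $\alpha_\lambda$ as the smallest element of $\Sigma(\rho_\lambda)$, and deduce (4) from Theorem~\ref{thm: direct comparison of rho and p}.

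The one substantive difference is how you obtain uniqueness and the constancy of $\Sigma(\rho_\lambda)$. Your uniqueness argument is more elementary: equality in the subgradient inequality forces $\rho(x)\in\partial s^*(\alpha_\lambda,x)=\{\rho_\lambda(x)\}$ directly from the differentiability of $s^*$ in \ref{assumption: regularity of s*}. The paper instead argues that any minimizer $\bar\rho$ of the fixed-mass problem is stationary for the JKO step, and then shows via a connectedness argument (using the set $Q=\{b:\essinf_x\partial_p s^*(b,x)>0\}$ together with \ref{assumption: inf sup ratio of partial s*}) that every $c$-concave dual maximizer for $\bar\rho$ must be constant. This longer route buys the paper something: the constancy of $\Sigma(\rho_\lambda)$ falls out as a byproduct, ready to be reused in property~(4). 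You instead recover that constancy by invoking Proposition~\ref{prop:maximum_principle}(2), which is legitimate since you have already established $\rho_\lambda\geq a_\lambda>0$ a.e.\ in property~(3). Your identification of $\alpha_\lambda$ as the smallest constant in $\Sigma(\rho_\lambda)$ via the minimality built into $\alpha_\lambda=\inf\{\alpha:\Lambda(\alpha)\geq\lambda\}$ is correct and matches the paper's final paragraph.
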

\begin{proof}
By the assumption \ref{assumption: regularity of s*}, $\alpha\mapsto \partial_p s^*(\alpha, x)$ is continuous and non-decreasing for all $x\in\Omega$.
Then \eqref{eqn: condition on admissible total mass} and \ref{assumption: asymptotics of s*} imply that there exists a non-empty bounded closed interval $[\alpha_\lambda, \beta_\lambda]\subset \RR$ such that
\begin{equation*}
\int_{\Omega} \partial_p s^*(\alpha, x)\, dx=\lambda
\end{equation*}
if and only if $\alpha\in[\alpha_\lambda, \beta_\lambda]$.

We claim that $\tilde{\rho}_{\lambda}:=\partial_p s^*(\alpha_{\lambda}, x)$ is a minimizer of \eqref{eq:fixed_mass_problem}.
Firstly, $\tilde{\rho}_{\lambda}\in X$.
Indeed, Lemma \ref{lem:dual_relation} gives
\[
\int_{\Omega} \alpha_{\lambda} \tilde{\rho}_{\lambda}(x)\, dx=\int_{\Omega} s\big(\tilde{\rho}_{\lambda}(x), x\big)+s^*(\alpha_{\lambda},x)\, dx.
\]
Recalling that $s^*(\alpha_{\lambda},x)\geq 0$, we have the bound
\[
E(\tilde{\rho}_{\lambda})=\int_{\Omega} s\big(\tilde{\rho}_{\lambda}(x), x\big)\, dx\leq \int_{\Omega} \alpha_{\lambda} \tilde{\rho}_{\lambda}(x)\, dx=\alpha_{\lambda}\lambda<+\infty.
\]
Now let $\rho$ be some other density with mass $\lambda$.  The convexity of the energy implies that
\[
E(\rho)\geq E(\tilde{\rho}_{\lambda})+\int_{\Omega}\alpha_{\lambda} \big(\rho(x)-\tilde{\rho}_{\lambda}(x))\, dx=E(\tilde{\rho}_{\lambda}).
\]
Hence, $\tilde{\rho}_{\lambda}$ is a minimizer of \eqref{eq:fixed_mass_problem}.

Conversely, for any minimizer $\bar{\rho}\in X$ of \eqref{eq:fixed_mass_problem}, let  
\[
\bar{\rho}_* =\argmin_{\rho\in X} J(\rho, \bar{\rho}).
\]
Then we should have $\bar{\rho}_* = \bar{\rho}$ a.e., and the transport map from $\bar{\rho}$ to $\bar{\rho}_*$ is $\bar{\rho}$-a.e.\;an identity map.
Let $\bar{p}$ be an arbitrary maximizer of the dual problem
\[
\bar{p} \in\argmax_{p\in X^*,p^{c\bar{c}}} J^*(p, \bar{\rho}).
\]
By Proposition \ref{prop:primal_dual}, $\bar{p}$ is Lipschitz
with $\nabla\bar{p} = 0$ for $\bar{\rho}$-a.e.\;$x\in\Omega$, and $\bar{\rho}(x) = \partial_p s^*(\bar{p}(x),x)$ a.e.\;in $\Omega$. 
Let $Q = \{b\in\mathbb{R}:\essinf_{x\in \Omega} \partial_p s^*(b,x)>0\}$; clearly, $Q$ is an open set since $Q^c$ is closed by \ref{assumption: regularity of s*}.
Consider the set $V:=\bar{p}^{-1}(Q)$, which is also open in $\Omega$ as $\bar{p}(x)$ is continuous.
Since $\bar{\rho}(x)=\partial_p s^*(\bar{p}(x),x)$ a.e., we find by the definition of $Q$ that $\bar{\rho}>0$ a.e.\;in $V$.
Hence, that $\nabla \bar{p}(x) = 0$ $\bar{\rho}$-a.e.\;in $V$ implies $\nabla \bar{p}(x) = 0$ a.e.\;in $V$.
So $\bar{p}$ is a constant on every connected component of $V$.
Since $\bar{p}$ is continuous, this implies $V$ is both open and closed in $\Omega$, and thus $V=\Omega$, which means $\bar{p}$ is a constant on $\Omega$.
Therefore, we conclude that any minimizer $\bar{\rho}$ must have the form $\bar{\rho}=\partial_p s^*(\alpha,x)$ in the a.e.\;sense for some $\alpha\in \RR$. Since $\alpha\mapsto \partial_p s^*(\alpha,x)$ is continuous and non-decreasing for all $x$, $\tilde{\rho}_{\lambda}$ turns out the be the unique minimizer of \eqref{eq:fixed_mass_problem}.
So from now on, we shall write it as $\rho_\lambda$.

The minimality of $\alpha_\lambda$ is then obvious given its definition and the argument above.
Since $\alpha_\lambda$ is clearly increasing in $\lambda$, $\rho_\lambda$ is non-decreasing with respect to $\lambda$.

By \ref{assumption: inf sup ratio of partial s*}, whenever $\lambda>0$, we must have
\[
a_\lambda:=\essinf_{x\in\Omega}\partial_p s^*(\alpha_\lambda,x)>0, \quad\mbox{ and }\quad b_\lambda:=\esssup_{x\in\Omega}\partial_p s^*(\alpha_\lambda,x)<+\infty,
\]
and vice versa.

Finally, the last claim follows from Theorem \ref{thm: direct comparison of rho and p}, the fact that
\[
\rho_{\lambda}=\argmin_{\rho\in X} J(\rho, \rho_{\lambda}),
\]
the minimality of $\alpha_\lambda$, as well as the argument above on the maximizers of the dual problem.
\end{proof}

\section{The Minimizing Movement Scheme and $L^1$-equicontinuity}
\label{sec: mms and equicontinuity}
In the rest of the paper, we aim at obtaining a weak solution of the problem $(P)$.
From now on, we assume $s(z,x)$ satisfies \ref{assumption: convexity}-\ref{assumption: inf sup ratio of partial s*} and we take $c(x,y)$ to be the quadratic cost
\begin{equation}\label{eq:mms_cost}
c(x,y)=\frac{1}{2\tau}|x-y|^2,
\end{equation}
where the parameter $\tau>0$ plays the role of the time step.
With $\rho^{0,\tau} :=\rho_0\in X$ satisfying \eqref{eq:good_data0}, thanks to Proposition \ref{prop:primal_dual}, we may apply the minimizing movements scheme
\begin{align}
\rho^{n+1,\tau}=&\;\argmin_{\rho\in X} J(\rho,\rho^{n,\tau}),\label{eqn: primal scheme recall}\\
p^{n+1,\tau}\in &\;\argmax_{p\in X^*, \,p^{c\bar{c}}=p} J^*(p,\rho^{n,\tau}),\label{eqn: dual scheme recall}
\end{align}
iteratively.
Since the maximizer of the dual problem may not be unique, $p^{n+1,\tau}$ here is always chosen to be the smallest $c$-concave one which is constructed in Lemma \ref{lem:pressure_special}.
Let $(\rho^\tau,p^\tau)$ be the time interpolation of the discrete solution defined in \eqref{interpolation}.
We hope to obtain a weak solution of $(P)$ in an appropriate sense by sending $\tau \to 0$.

In this section, we will focus on establishing compactness for the family of densities $\{\rho^{\tau}\}_{\tau>0}$.  In particular, the $L^1$-contraction principle enables us to prove a crucial $L^1$-spatial equicontinuity property for $\{\rho^{\tau}\}_{\tau>0}$.

We begin with establishing the following energy dissipation inequality, which is a well-known consequence of the minimizing movements scheme.

\begin{lemma}\label{lem: edi}
Let $\rho_0\in X$ satisfy \eqref{eq:good_data0}. Let $\rho^{\tau}$ and $p^{\tau}$ be given in \eqref{interpolation} with initial data $\rho_0$.
Then for any $T>0$, we have
\begin{equation}\label{eq:edi}
E(\rho^{\tau}(\cdot,T))+\frac{1}{2}\int_0^{T'} \int_{\Omega} \rho^{\tau}(x,t)|\nabla p^{\tau}(x,t)|^2\, dx\, dt\leq E(\rho_0).
\end{equation}
where $T':=(N_\tau+1)\tau$ and $N_\tau :=\lfloor \frac{T}{\tau}\rfloor$.
In particular, 
\begin{equation}\label{pressure_bound}
\frac{1}{2}\int_0^{\infty} \int_{\Omega} \rho^{\tau}(x,t)|\nabla p^{\tau}(x,t)|^2\, dx\, dt\leq E(\rho_0)-\inf_{\rho\in X} E(\rho) <+\infty,
\end{equation}
where the bound only depends on $s$ and $E(\rho_0)$.
\end{lemma}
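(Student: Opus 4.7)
The proof is a standard chained-minimization argument, made concrete by using the explicit form of the optimal map for the quadratic cost. The plan is to exploit the one-step optimality, convert the transport cost to a Dirichlet-type energy in $p^{n+1,\tau}$, and telescope.

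First I would set up the one-step dissipation. For each $n$, $\rho^{n+1,\tau}$ is the minimizer of $\rho\mapsto E(\rho)+C(\rho,\rho^{n,\tau})$; using $\rho^{n,\tau}$ itself as a competitor and observing $C(\rho^{n,\tau},\rho^{n,\tau})=0$ (transport against identity has zero cost under $c(x,y)=\tfrac{1}{2\tau}|x-y|^2$), we get
\[
E(\rho^{n+1,\tau})+C(\rho^{n+1,\tau},\rho^{n,\tau})\leq E(\rho^{n,\tau}).
\]

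Next, convert the transport cost to a pressure integral. By Proposition \ref{prop:primal_dual}, $T_{p^{n+1,\tau}}$ is the optimal map from $\rho^{n,\tau}$ to $\rho^{n+1,\tau}$, and the explicit formula \eqref{eq:quadratic_p_map} for the quadratic cost gives $T_{p^{n+1,\tau}}^{-1}(x)=x+\tau\nabla p^{n+1,\tau}(x)$. Pushing the cost computation through $T_{p^{n+1,\tau}}^{-1}$ against $\rho^{n+1,\tau}$,
\[
C(\rho^{n+1,\tau},\rho^{n,\tau})=\int_{\Omega}\frac{|x-T_{p^{n+1,\tau}}^{-1}(x)|^2}{2\tau}\rho^{n+1,\tau}(x)\,dx=\frac{\tau}{2}\int_{\Omega}|\nabla p^{n+1,\tau}(x)|^2\rho^{n+1,\tau}(x)\,dx.
\]

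Now I would telescope. Summing the one-step inequality from $n=0$ to $n=N_\tau$ and using the piecewise-constant interpolation \eqref{interpolation} yields
\[
E(\rho^{N_\tau+1,\tau})+\frac{1}{2}\sum_{n=0}^{N_\tau}\tau\int_{\Omega}\rho^{n+1,\tau}|\nabla p^{n+1,\tau}|^2\,dx\leq E(\rho_0),
\]
which is exactly \eqref{eq:edi} once one recognizes $\rho^{\tau}(\cdot,T)=\rho^{N_\tau+1,\tau}$ and rewrites the sum as $\int_0^{T'}\int_{\Omega}\rho^{\tau}|\nabla p^{\tau}|^2\,dx\,dt$.

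Finally, for \eqref{pressure_bound} I would invoke that $E$ is bounded below: assumption \ref{assumption: effective domain} supplies $\inf_{(z,x)\in\RR\times\Omega}s(z,x)>-\infty$, so $\inf_{\rho\in X}E(\rho)\geq|\Omega|\inf s>-\infty$; since the left-hand side of \eqref{eq:edi} is monotone in $T$, letting $T\to\infty$ gives the uniform-in-$T$ bound depending only on $s$ and $E(\rho_0)$. There is no real obstacle here — the only mild point is making sure the time interpolation accounts correctly for the endpoint index $N_\tau+1$, which is why the statement uses $T'=(N_\tau+1)\tau$ rather than $T$.
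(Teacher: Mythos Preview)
Your proof is correct and follows essentially the same route as the paper: one-step optimality against the competitor $\rho^{n,\tau}$, conversion of the transport cost via the explicit backward map $T_{p^{n+1,\tau}}^{-1}(x)=x+\tau\nabla p^{n+1,\tau}(x)$, telescoping, and the lower bound on $E$ from \ref{assumption: effective domain}. The paper's argument is identical in structure and detail.
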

\begin{proof}
The optimality condition for the primal problem implies that
\begin{equation}\label{eq:primal_optimality}
J(\rho^{n+1,\tau},\rho^{n,\tau})\leq J(\rho^{n,\tau},\rho^{n,\tau}).
\end{equation}
For the quadratic cost \eqref{eq:mms_cost}, by \eqref{eq:quadratic_p_map}, the optimal transport map from $\rho^{n+1,\tau}$ to $\rho^{n,\tau}$ is given by $T_{p^{n+1,\tau}}^{-1}(x)=x+\tau\nabla p^{n+1,\tau}(x)$ in the $\rho^{n+1,\tau}$-a.e.-sense.  Hence, 
\[
\frac{1}{2\tau}W_2^2(\rho^{n+1,\tau},\rho^{n,\tau}) = \int_{\Omega} \frac{1}{2\tau}|T_{p^{n+1,\tau}}^{-1}(x)-x|^2\rho^{n+1,\tau}(x)\, dx=\frac{\tau}{2}\int_{\Omega} |\nabla p^{n+1,\tau}(x)|^2\rho^{n+1,\tau}(x)\, dx.
\]
So \eqref{eq:primal_optimality} can be rewritten as
\[
\frac{\tau}{2}\int_{\Omega} |\nabla p^{n+1,\tau}(x)|^2\rho^{n+1,\tau}(x)\, dx\leq E(\rho^{n,\tau})-E(\rho^{n+1,\tau}).
\]
Summing over $n$ from $0$ to $N_\tau$, we have
\[
E(\rho^{N_\tau+1,\tau})+\frac{\tau}{2}\sum_{n=0}^{N_\tau} \int_{\Omega} |\nabla p^{n+1,\tau}(x)|^2\rho^{n+1,\tau}(x)\, dx\leq E(\rho_0).
\]
This together with the definition of $\rho^\tau$ and $p^\tau$ yields \eqref{eq:edi}.
\eqref{pressure_bound} follows from the fact that $\inf_{\rho\in X} E(\rho)>-\infty$ thanks to \ref{assumption: effective domain}.
\end{proof}

The energy dissipation inequality gives us a control on the gradients of the pressure, which provides enough regularity for establishing the aforementioned $L^1$-spatial equicontinuity of $\{\rho^\tau\}_{\tau}$.

\begin{prop}\label{prop:spatial_equicontinuity}
Let $s$ satisfy \ref{assumption: convexity}-\ref{assumption: inf sup ratio of partial s*}, and let $\rho_0\in X$ satisfy \eqref{eq:good_data0}.  For $\rho^{\tau}$ as given in \eqref{interpolation} with initial data $\rho_0$, extend $\rho^{\tau}$ by zero to all of $\RR^d$. Then for any $T>0$ and $y\in \R^d$ we have
\begin{equation*}
\lim_{\epsilon\to 0}\, \sup_{0<\tau\ll T}\, \int_0^T \int_{\Omega} |\rho^{\tau}(x+\epsilon y,t)-\rho^{\tau}(x,t)|\,dx\,dt=0.
\end{equation*}
\end{prop}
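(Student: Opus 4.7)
The plan is to combine the $L^1$-contraction principle (Theorem \ref{thm:l1_contraction}) with an approximation argument, reducing the problem to initial data for which the comparison principle supplies uniform two-sided bounds on the iterated densities. For such initial data, the pressure will be controlled in $L^2([0,T];H^1(\Omega))$ uniformly in $\tau$, and the spatial equicontinuity of the density will follow from that of the pressure through the subdifferential relation $\rho = \partial_p s^*(p,\cdot)$.

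First I would approximate $\rho_0$ in $L^1(\Omega)$ by initial data $\rho_{0,\delta}$ sandwiched between two stationary barriers, $\partial_p s^*(m_\delta,\cdot) \leq \rho_{0,\delta} \leq \partial_p s^*(M_\delta,\cdot)$, with $m_\delta, M_\delta$ chosen using \ref{assumption: asymptotics of s*} so that both bounds are nontrivial. The $L^1$-contraction in Theorem \ref{thm:l1_contraction} gives $\|\rho^{n,\tau}_\delta - \rho^{n,\tau}\|_{L^1(\Omega)} \leq \|\rho_{0,\delta}-\rho_0\|_{L^1(\Omega)}$ uniformly in $n$ and $\tau$, so by the triangle inequality it is enough to prove the asserted equicontinuity for each fixed family $\{\rho^\tau_\delta\}_\tau$.

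For the approximated initial datum, the comparison principle (Theorem \ref{thm: direct comparison of rho and p}) applied to the stationary barriers (Lemma \ref{lem:lower_bound}(\ref{property: comparison with static densities})) propagates both bounds through the iteration, yielding uniform constants $0 < a_\delta \leq \rho^{n,\tau}_\delta \leq b_\delta < \infty$ and uniform $L^\infty$ bounds on the chosen smallest $c$-concave pressures $p^{n,\tau}_\delta$. Substituting the strict lower bound on $\rho^\tau_\delta$ into the energy dissipation inequality \eqref{pressure_bound} then produces a uniform-in-$\tau$ bound on $\int_0^T\!\int_\Omega |\nabla p^\tau_\delta|^2\,dx\,dt$, so that
\[
\|p^\tau_\delta(\cdot+\epsilon y,\cdot) - p^\tau_\delta\|_{L^2(\Omega_T)} \lesssim \epsilon|y|
\]
uniformly in $\tau$. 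This pressure equicontinuity should then be transferred to the density via $\rho^\tau_\delta = \partial_p s^*(p^\tau_\delta,\cdot)$ and \ref{assumption: regularity of s*}.

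The hard part will be this last transfer: $\partial_p s^*(\cdot,x)$ is only continuous in $p$ (not Lipschitz), its modulus of continuity is not a priori uniform in $x$, and \ref{assumption: regularity of s*} gives an $L^2$-in-$x$ control of $\nabla_x s^*$ rather than a modulus for $\partial_p s^*(p,\cdot)$ in $x$. I expect to handle this by an additional regularization, replacing $s$ by a strongly convex perturbation $s_k$ for which $\partial_p s_k^*$ is Lipschitz in $p$ uniformly in $x$, establishing spatial equicontinuity for the $s_k$-scheme, and then passing to the limit $k\to\infty$ through another application of the $L^1$-contraction (in the spirit of the approximations used in Theorem \ref{thm:l1_contraction} and Lemma \ref{lem:pressure_special}) to recover the claim for the original $s$.
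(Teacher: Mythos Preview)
Your overall architecture---approximate $\rho_0$ by sandwiched data, use the $L^1$-contraction to reduce to that case, use the barriers from Lemma~\ref{lem:lower_bound} and the energy dissipation inequality to get a uniform $L^2_tH^1_x$ bound on the pressure---matches the paper exactly. The gap is in your proposed handling of the ``hard part''. You want to regularize $s$ to a strongly convex $s_k$, prove equicontinuity for the $s_k$-scheme, and then pass to the limit via ``another application of the $L^1$-contraction''. But Theorem~\ref{thm:l1_contraction} compares two initial data for the \emph{same} energy; it says nothing about $\|\rho^\tau_{s_k}-\rho^\tau_s\|_{L^1}$. The approximations in Theorem~\ref{thm:l1_contraction} and Lemma~\ref{lem:pressure_special} are single-step arguments that yield weak~$L^1$ (resp.\ uniform pressure) convergence of one minimizer; iterating a single-step error over $T/\tau$ steps is not uniform in~$\tau$. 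There is also a second obstruction you anticipate but do not resolve: making $s$ strongly convex in $z$ gives Lipschitz dependence of $\partial_p s_k^*$ in $p$, but does nothing for the $x$-dependence, and assumption~\ref{assumption: regularity of s*} only controls $\nabla_x s^*$, not $\nabla_x(\partial_p s^*)$.

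The paper avoids both issues by leaving the energy untouched and instead approximating the \emph{function} $(p,x)\mapsto \partial_p s^*(p,x)$ directly. Since the barrier argument confines $p^\tau_k$ to a fixed compact interval $I_k$, one can mollify $\partial_p s^*$ in $(p,x)$ to obtain smooth $D_{\delta}$ with $\int_\Omega \sup_{p\in I_k}|\partial_p s^*(p,x)-D_\delta(p,x)|\,dx<\delta$ (this is Lemma~\ref{lem: approximation by smooth functions}; monotonicity in $p$ is used to control the sup by finitely many pointwise values). The Lipschitz constant $M_\delta$ of $D_\delta$ in both variables then converts the $L^1_{t,x}$ bound on $|p^\tau_k(x+\epsilon y)-p^\tau_k(x)|$ into the same for $\rho^\tau_k$, up to an error $C\delta$ that is uniform in $\tau$ and $\epsilon$; one sends $\epsilon\to 0$ first, then $\delta\to 0$. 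This sidesteps any need to compare schemes with different energies.
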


In order to prove Proposition \ref{prop:spatial_equicontinuity}, we need the following lemma that allows us to approximate $\partial_p s^*(p,x)$ by smooth functions.

\begin{lemma}
\label{lem: approximation by smooth functions}
If $f\in L^1(\Omega; C_{loc}(\RR))$ such that $p\mapsto f(p,x)$ is monotone for all $x\in\Omega$, then there exists a sequence of $f_m$ smooth on $\RR\times \Omega$ such that for any compact interval $I\subset \RR$,
\[
\lim_{m\to\infty} \int_{\Omega} \sup_{p\in I} |f(p,x)-f_m(p,x)|\, dx=0.
\]
\end{lemma}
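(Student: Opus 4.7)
The plan is to construct $f_m$ via a two-stage approximation: first replace $f$ by a piecewise linear interpolant in the $p$-variable on a fine mesh, then smooth the one-variable basis functions and mollify the $x$-dependent coefficients so the result is jointly smooth on $\RR\times\Omega$.

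First, I reduce to an expanding sequence of compact intervals. Set $I_m := [-m,m]$. Since any compact $I\subset\RR$ is contained in $I_m$ for $m$ large, it suffices to produce, for each $m$, a function $f_m$ smooth on $\RR\times\Omega$ with
\[
\int_\Omega \sup_{p\in I_m}|f(p,x)-f_m(p,x)|\,dx<\frac{1}{m}.
\]

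The next step is the piecewise linear interpolation in $p$. Partition $I_m$ into $N$ equal subintervals with nodes $p_0<\cdots<p_N$, and let $\phi_0,\dots,\phi_N\in C(\RR)$ be the associated continuous piecewise linear ``tent'' basis, extended by zero outside $I_m$. Define
\[
L_N(p,x):=\sum_{i=0}^{N} f(p_i,x)\,\phi_i(p).
\]
For each $x\in\Omega$, the monotonicity of $f(\cdot,x)$ forces both $f(\cdot,x)$ and its linear interpolant to lie between $f(p_{i-1},x)$ and $f(p_i,x)$ on $[p_{i-1},p_i]$, so
\[
\sup_{p\in I_m}|f(p,x)-L_N(p,x)|\;\le\;\max_{1\le i\le N}\bigl|f(p_i,x)-f(p_{i-1},x)\bigr|.
\]
The right-hand side is controlled by $2\norm{f(\cdot,x)}_{C(I_m)}$, which is integrable by hypothesis, and tends to zero pointwise in $x$ as $N\to\infty$ by uniform continuity of $f(\cdot,x)$ on $I_m$. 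Dominated convergence then yields $\int_\Omega\sup_{p\in I_m}|f-L_N|\,dx\to 0$. Fix $N=N_m$ so that this integral is $<1/(2m)$.

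For the smoothing step, each coefficient $f(p_i,\cdot)\in L^1(\Omega)$ can be approximated by some $g_i\in C_c^\infty(\Omega)$ in $L^1(\Omega)$, and each tent function $\phi_i$ can be mollified to $\tilde\phi_i\in C^\infty(\RR)$ with $\sup_{p\in I_m}|\phi_i-\tilde\phi_i|$ as small as desired and $\norm{\tilde\phi_i}_{L^\infty}$ uniformly bounded. Setting
\[
f_m(p,x):=\sum_{i=0}^{N_m} g_i(x)\,\tilde\phi_i(p),
\]
which is smooth on $\RR\times\Omega$, the triangle inequality gives
\[
\sup_{p\in I_m}|L_{N_m}(p,x)-f_m(p,x)|\le\sum_{i}\bigl|f(p_i,x)-g_i(x)\bigr|\,\norm{\phi_i}_{C(I_m)}+\sum_{i}|g_i(x)|\,\sup_{p\in I_m}|\phi_i-\tilde\phi_i|,
\]
whose $L^1(\Omega)$ norm can be made $<1/(2m)$ by choosing the approximations close enough. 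Combining with the interpolation estimate yields the required bound.

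The main technical point is the step that extracts $L^1(\Omega)$-convergence of $\sup_{p\in I_m}|f-L_N|$ from pointwise-in-$x$ convergence. The monotonicity hypothesis is convenient here because it reduces the sup over a subinterval to a single increment of $f(\cdot,x)$ at the endpoints; continuity alone (via a modulus of continuity on the compact interval $I_m$) would also suffice, so the monotonicity serves more to align with the application (to $\partial_p s^*$) than as an indispensable ingredient.
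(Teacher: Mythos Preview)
Your proof is correct and takes a genuinely different route from the paper's. The paper mollifies $f$ jointly in $(p,x)$, setting $f_m(p,x)=\int \eta(q,y)\,f(p+\tfrac{q}{m},x+\tfrac{y}{m})\,dy\,dq$, and then controls $\sup_{p\in I}|f-f_m|$ by subdividing $I$ into $k$ pieces and using monotonicity to reduce the sup on each piece to an endpoint increment; the resulting bound splits into a term governed by the $p$-modulus of continuity of $f(\cdot,x)$ (handled by dominated convergence as $k\to\infty$) and a finite sum of $L^1$-translation differences in $x$ (handled by sending $m\to\infty$). Your construction is instead separable: you first pass to a finite-rank object $L_N(p,x)=\sum_i f(p_i,x)\phi_i(p)$ and then smooth each factor individually. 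Both arguments exploit monotonicity in exactly the same way---to collapse the sup over a subinterval to a single increment---and both invoke dominated convergence against the integrable majorant $\|f(\cdot,x)\|_{C(I)}$. Your approach is a bit more explicit and makes the finite-dimensional structure visible, while the paper's joint mollifier is more compact to write down; neither gains essential generality over the other. Your closing remark that continuity alone would suffice (via the modulus of continuity on $I_m$) is correct and applies equally to the paper's argument.
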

\begin{proof}
Let us extend $f$ to $L^1(\RR^d; C_{loc}(\RR))$ by setting $f(p,x)=0$ if $x\notin \Omega$.
Let $\eta$ be a nonnegative smooth mollifier supported in the unit ball of $\RR\times \RR^d$, having integral $1$.
We define
\[
f_m(p,x):=\int_{\RR\times \RR^d} \eta(q, y) f\left(p+\frac{1}{m}q, x+\frac{1}{m}y\right)\, dy\, dq.
\]
It then follows that
 \[
\int_{\Omega} \sup_{p\in I} |f(p,x)-f_m(p,x)|\, dx\leq \int_{\Omega}\int_{\RR\times \RR^d} \eta(q,y) \sup_{p\in I} \left|f(p,x)-f\left(p+\frac{1}{m}q,x+\frac{1}{m}y\right)\right|.
\]
With $k\in\ZZ_+$ to be chosen, we subdivide $I$ into $k$ disjoint intervals of equal length, say $I=\bigcup_{0\leq i\leq k-1} [a_{i,k},a_{i+1,k}]$ with $a_{0,k}\leq\cdots\leq a_{k,k}$.
Thanks to the monotonicity of $p\mapsto f(p,x)$, for any fixed $x\in \Omega$, $|y|\leq 1$, $|q|\leq 1$, and $k\leq m$, we have
\begin{equation*}
\begin{split}
&\;\sup_{p\in [a_{i,k}, a_{i+1,k}]} \left|f(p,x)-f\left(p+\frac{1}{m}q,x+\frac{1}{m}y\right)\right|\\
\leq&\;
\left|f(a_{i+1,k},x)-f\left(a_{i,k}-\frac{1}{k},x+\frac{1}{m}y\right)\right|+ \left|f(a_{i,k},x)-f\left(a_{i+1,k}+\frac{1}{k},x+\frac{1}{m}y\right)\right|.
\end{split}
\end{equation*}
Hence, for each $k\in \ZZ_+$, we have
\begin{equation}
\begin{split}
&\;\int_{\Omega}\int_{\RR\times \RR^d} \eta(q,y) \sup_{z\in I} \left|f(p,x)-f\left(p+\frac{1}{m}q,x+\frac{1}{m}y\right)\right|\\
\leq&\;
 \int_{\Omega}\int_{\RR\times \RR^d} \eta(q,y) \max_{0\leq i\leq k-1} \left|f(a_{i+1,k},x)-f\left(a_{i,k}-\frac{1}{k},x+\frac{1}{m}y\right)\right|\\
 &\;+  \int_{\Omega}\int_{\RR\times \RR^d} \eta(q,y) \max_{0\leq i\leq k-1}\left|f(a_{i,k},x)-f\left(a_{i+1,k}+\frac{1}{k},x+\frac{1}{m}y\right)\right|.
\end{split}
\label{eqn: difference between mollified and original f}
\end{equation}

We can then bound the first term above by splitting it into two terms
\begin{equation}
\label{eq:max_sum_bound}
\begin{split}
&\;\int_{\Omega}\int_{\RR\times \RR^d} \eta(q,y) 
\max_{0\leq i\leq k-1} \left|f(a_{i+1,k},x)-f\left(a_{i,k}-\frac{1}{k},x\right)\right|\\
&\;+\sum_{i=0}^{k-1} \int_{\RR^d}\int_{\RR\times \RR^d} \eta(q,y) 
\left|f\left(a_{i,k}-\frac{1}{k},x\right)-f\left(a_{i,k}-\frac{1}{k},x+\frac{1}{m}y\right)\right|.
\end{split}
\end{equation}
By assumption, for any fixed $z\in \RR$, $x\mapsto f(z,x)$ is an $L^1(\RR^d)$ function. So
\begin{equation*}
\begin{split}
\lim_{m\to\infty}\sum_{i=0}^{k-1} \int_{\RR^d}\int_{\RR\times \RR^d} \eta(q,y) 
\left|f\left(a_{i,k}-\frac{1}{k},x\right)-f\left(a_{i,k}-\frac{1}{k},x+\frac{1}{m}y\right)\right|=0
\end{split}
\end{equation*}
for any finite $k$.  We also know that for almost every $x\in\Omega$, by uniform continuity of $f(\cdot,x)$ on bounded intervals, 
\[
\lim_{k\to\infty} 
\max_{0\leq i\leq k-1} \left|f(a_{i+1,k},x)-f\left(a_{i,k}-\frac{1}{k},x\right)\right|=0.
\]
Therefore, the first line in \eqref{eq:max_sum_bound} will vanish as $k\to\infty$ thanks to the dominated convergence theorem.
By letting $m\geq k\gg 1$, we can make the first term in \eqref{eqn: difference between mollified and original f} as small as we want.
The second term in \eqref{eqn: difference between mollified and original f} can be handled in exactly the same way.
This completes the proof.
\end{proof}

Now we are ready to prove the $L^1$-spatial equicontinuity of $\{\rho^\tau\}_{\tau}$.

\begin{proof}[Proof of Proposition \ref{prop:spatial_equicontinuity}]

We first approximate the initial data by densities that are bounded away from zero and infinity
$$
\rho_{0,k}(x): =\min\big(\max\big( \rho_0(x), \rho_{\frac{1}{k}}(x)\big), \partial_p s^*(k,x)\big),
$$
where $\rho_{\frac{1}{k}}$ is defined as in Lemma \ref{lem:lower_bound}.
By \ref{assumption: asymptotics of s*}, for $k\gg 1$,
$\rho_{0,k}(x)\in [\rho_{\frac{1}{k}}(x), \partial_p s^*(k,x)]$
is well-defined.
It is clear that $\|\rho_{0,k}-\rho_0\|_{L^1(\Omega)}\to 0$ as $k\to \infty$.
Moreover, by the convexity of $s$, $\rho_{0,k}\in X$ and it obviously satisfies \eqref{eq:good_data0} for fixed $k$.
Let $\rho_k^{n,\tau}$ $(n\geq 1)$ be as given in \eqref{eqn: primal scheme recall} with initial data $\rho_k^{0,\tau} := \rho_{0,k}$.
We extend $\rho_k^{n,\tau}$ by zero to the entire $\mathbb{R}^d$.
Let $p_k^{n,\tau}$ $(n\geq 1)$ be obtained iteratively by \eqref{eqn: dual scheme recall} with $\rho^{n,\tau}$ there replaced by $\rho_k^{n,\tau}$.
Here we take $p_k^{n,\tau}$ to be the smallest $c$-concave maximizer of \eqref{eqn: dual scheme recall} in all steps.
Thanks to Lemma \ref{lem:lower_bound}, for all $n$ and $\tau$, we have $\rho_k^{n,\tau}\geq \rho_{\frac{1}{k}}\geq a_{\frac{1}{k}}>0$ a.e., and $p_k^{n,\tau}\in[\alpha_{\frac{1}{k}},k]$.
Let $\rho_k^{\tau}$ and $p_k^\tau$ be the discontinuous time interpolations as given in \eqref{interpolation}.

Without loss of generality, assume $y\in B_1(0)\subset \mathbb{R}^d$. We have
\begin{equation*}
\begin{split}
&\;\sup_{\tau} \int_{\Omega} |\rho^{n,\tau}(x+\epsilon y)-\rho^{n,\tau}(x)|\,dx\\
\leq &\;2\sup_{\tau} \|\rho_k^{n,\tau}-\rho^{n,\tau}\|_{L^1(\Omega)}+\sup_{\tau} \int_{\Omega} |\rho^{n,\tau}_k(x+\epsilon y)-\rho^{n,\tau}_k(x)|\,dx.
\end{split}
\end{equation*}
By Theorem~\ref{thm:l1_contraction}, $\sup_{\tau} \|\rho_k^{n,\tau}-\rho^{n,\tau}\|_{L^1(\Omega)}\leq \|\rho_{0,k}-\rho_0\|_{L^1(\Omega)}$, which converges to $0$ as $k\to \infty$. Hence to conclude, it is enough to show that for every fixed $k$,
\begin{equation}\label{claim0}
\lim_{\e\to 0 } \sup_{\tau} \int_0^T \int_{\Omega} |\rho^{\tau}_k(x+\epsilon y,t)-\rho^{\tau}_k(x,t)|\,dx\,dt = 0.
\end{equation}

Define $\Omega_{\epsilon_0} :=\{x\in \Omega: \; B_{\epsilon_0}(x)\subset \Omega\}$. Then for any $\e\in (0,\e_0)$,
\begin{equation*}
\begin{split}
&\;\int_{\Omega_{\epsilon_0}}a_{\frac{1}{k}} |p_{k}^{n,\tau}(x+\epsilon y)-p_{k}^{n,\tau}(x)|\,dx\\
\leq &\;|\epsilon y|\int_{\Omega_{\epsilon_0}}a_{\frac{1}{k}}\int_0^1 |\nabla p_{k}^{n,\tau}(x+\theta\epsilon y)|\,d\theta \,dx \\
\leq &\;|\epsilon y|\int_0^1 \big(a_{\frac{1}{k}} |\Omega_{\epsilon_0}|\big)^{1/2}\left(\int_{\Omega_{\epsilon_0}}\rho_{k}^{n,\tau}(x+\theta\epsilon y) |\nabla p_{k}^{n,\tau}(x+\theta\epsilon y)|^2\,dx\right)^{1/2} \,d\theta \\
\leq &\;\epsilon \big(a_{\frac{1}{k}} |\Omega|\big)^{1/2}\left(\int_{\Omega}\rho_{k}^{n,\tau}(x) |\nabla p_{k}^{n,\tau}(x)|^2\,dx\right)^{1/2}.
\end{split}
\end{equation*}
Note that $p_{k}^{n,\tau}$ is unique on $\Omega$ up to an additive constant, so the quantity in the first line is well-defined.
Combining this with Lemma \ref{lem: edi} yields
\begin{equation}
\tau\sum_{n = 1}^{N_\tau+1} \int_{\Omega_{\epsilon_0}}|p_{k}^{n,\tau}(x+\epsilon y)-p_{k}^{n,\tau}(x)|\,dx
\leq C(k,|\Omega|, T,s,E(\rho_0))\epsilon.
\label{eqn: L^1 bound on displacement of p}
\end{equation}
Here given $\tau\ll T$, we assumed $\tau (N_\tau+1) \leq 2T$.
Recall that from Proposition \ref{prop:primal_dual} we have $\rho_k^{n,\tau} =\partial_p s^*(p^{n,\tau}_k,x)$ a.e.. Next, we will use this relation as well as \eqref{eqn: L^1 bound on displacement of p} to conclude.

\medskip

Thanks to Lemma \ref{lem: approximation by smooth functions}, we take $D_{\delta,k}(p,x)$ to be a smooth approximation of $\partial_p s^*(p,x)$ on $\Sigma_k:= [\alpha_{\frac{1}{k}}, k]\times \Omega$, such that
\begin{equation*}
\int_{\Omega}\|\partial_p s^*(\cdot,x)-D_{\delta,k}(\cdot,x)\|_{C([\alpha_{1/k},k])}\,dx\leq \delta.
\end{equation*}
With $\e\in (0,\e_0)$ and $M_{\delta,k} := \sup_{\Sigma_k}|\nabla D_{\delta,k}|$, we derive that
\begin{equation*}
\begin{split}
&\;\int_{\Omega_{\epsilon_0}} |\rho_k^{n,\tau}(x+\epsilon y)- \rho_k^{n,\tau}(x)|\,dx\\
= &\;\int_{\Omega_{\epsilon_0}} |\partial_p s^*(p_k^{n,\tau}(x+\epsilon y),x+\epsilon y)- \partial_p s^*(p_k^{n,\tau}(x),x)|\,dx\\
%
%
\leq &\;\int_{\Omega_{\epsilon_0}} |D_{\delta,k}(p_k^{n,\tau}(x+\e y),x+\e y)- D_{\delta,k}(p_k^{n,\tau}(x),x)|\,dx+C\delta\\
\leq & \; M_{\delta, k} \int_{\Omega_{\epsilon_0}} |p_k^{n,\tau}(x+\e y) - p_k^{n,\tau}(x)|+|\epsilon y|\, dx + C\delta,
\end{split}
\end{equation*}
where in the first inequality, we used the fact that $x+\e y\in \Omega$ for all $x\in \Omega_{\e_0}$.  Due to \eqref{eqn: L^1 bound on displacement of p}, we conclude that
$$
\sup_{\tau} \int_0^T\int_{\Omega_{\epsilon_0}} |\rho_k^{\tau}(x+\epsilon y,t)- \rho_k^{\tau}(x,t)|\,dx \,dt  \leq M_{\delta,k} C(k,|\Omega|, T,s,E(\rho_0))\epsilon + CT\delta.
$$

To this end, we derive that
\begin{equation*}
\begin{split}
&\;\sup_{\tau}\int_0^T\int_{\Omega} |\rho_k^{\tau}(x+\epsilon y,t)- \rho_k^{\tau}(x,t)|\,dx\, dt \\
\leq &\;\sup_{\tau}\int_0^T\int_{\Omega_{\e_0}} |\rho_k^{\tau}(x+\epsilon y,t)- \rho_k^{\tau}(x,t)|\,dx\,dt
+C\sup_{\tau}\int_0^T\int_{\Omega\setminus \Omega_{2\e_0}} |\rho_k^{\tau}(x,t)|\,dx\,dt\\
\leq &\;M_{\delta,k} C(k,|\Omega|, T,s,E(\rho_0))\epsilon +CT\delta+CT\int_{\Omega\setminus \Omega_{2\e_0}} |\partial_p s^{*}(k,x)|\,dx.
\end{split}
\end{equation*}
Now sending $\e\to 0$, we find 
$$
\limsup_{\e\to 0}\sup_{\tau}\,\int_0^T\int_{\Omega} |\rho_k^{\tau}(x+\epsilon y,t)- \rho_k^{\tau}(x,t)|\,dx\,dt  \leq CT\delta+CT\int_{\Omega\setminus \Omega_{2\e_0}} |\partial_p s^{*}(k,x)|\,dx.
$$
Therefore, we can conclude \eqref{claim0}  by sending $\delta$ and $\e_0\to 0$.
\end{proof}

In the rest of this paper, we shall assume that $\rho_0\in X$ satisfies \eqref{good_data_1}, which is a stronger assumption than \eqref{eq:good_data0}. As we will see below, this gives rise to uniform upper bounds for both the density and pressure variables over the iteration.

\begin{lemma}\label{bounds}
Let $\rho_0\in X$ satisfy \eqref{good_data_1}. Then for all $n$ and $\tau$, $\rho^{n,\tau}$ satisfies \eqref{good_data_1},
\begin{equation*}
\norm{\rho^{n,\tau}}_{L^{\infty}(\Omega)}\leq C(s,M)\quad \mbox{ and }\quad\sup_{x\in \Omega}p^{n,\tau}(x)\leq M,
\end{equation*}
where $M$ is introduced in \eqref{good_data_1}.
\end{lemma}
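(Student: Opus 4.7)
The proof proceeds by induction on $n$. The base case $n=0$ is the hypothesis \eqref{good_data_1}. Mass preservation along the scheme (Proposition \ref{prop:primal_dual} yields $\rho^{n+1,\tau}=T_{p^{n+1,\tau}\#}\rho^{n,\tau}$) gives $\int_\Omega\rho^{n+1,\tau}\,dx=\int_\Omega\rho^{n,\tau}\,dx>0$, so the positive-mass condition in \eqref{good_data_1} propagates automatically. It therefore suffices to propagate the pointwise bound $\rho^{n,\tau}\leq\partial_p s^*(M,\cdot)$ and produce the pressure bound along the way.

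The idea is to compare each step with the stationary profile $\sigma(x):=\partial_p s^*(M,x)$. Assumption \ref{assumption: inf sup ratio of partial s*} already gives $\sigma\in L^\infty(\Omega)$ with $\|\sigma\|_{L^\infty(\Omega)}\leq M_M\,\essinf_{x\in\Omega}\partial_p s^*(M,x)$, which furnishes the desired constant $C(s,M)$ in the stated $L^\infty$ bound. By Lemma \ref{lem:dual_relation}, $M\in\partial s(\sigma(x),x)$ a.e.\;in $\Omega$, and a direct verification via Proposition \ref{prop:primal_dual} shows that $\sigma=\argmin_{\rho}J(\rho,\sigma)$ with the constant $p\equiv M$ being a $c$-concave maximizer of $J^*(\cdot,\sigma)$; that is, $\sigma$ is a genuine fixed point of the scheme.

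Assuming $\rho^{n,\tau}\leq\sigma$ a.e., the plan is to invoke Lemma \ref{lem:lower_bound}(4) with $\lambda:=\int_\Omega\sigma\,dx$. In the generic case $\lambda<L:=\lim_{b\to\infty}\int_\Omega\partial_p s^*(b,x)\,dx$, the admissibility condition \eqref{eqn: condition on admissible total mass} is satisfied. Since $\sigma$ already realizes mass $\lambda$ in the form $\partial_p s^*(M,\cdot)$, uniqueness of $\rho_\lambda$ from Lemma \ref{lem:lower_bound} forces $\rho_\lambda=\sigma$ a.e., while the minimality of $\alpha_\lambda$ gives $\alpha_\lambda\leq M$. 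Applying Lemma \ref{lem:lower_bound}(4) to $\rho^{n,\tau}\leq\rho_\lambda=\sigma$ then yields $\rho^{n+1,\tau}\leq\sigma$ a.e.\;and, since $p^{n+1,\tau}$ is selected in \eqref{eqn: dual scheme recall} to be the smallest $c$-concave maximizer, $\sup_{x\in\Omega}p^{n+1,\tau}(x)\leq\alpha_\lambda\leq M$. This closes the induction.

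The main obstacle is the degenerate borderline $\lambda=L$, where the choice $\lambda=\int\sigma$ is no longer admissible in Lemma \ref{lem:lower_bound}. I would dispatch it by perturbation. If there exists $\e_k\downarrow 0$ with $\int_\Omega\partial_p s^*(M+\e_k,x)\,dx<L$, the argument above applied with $M+\e_k$ and $\sigma_{\e_k}:=\partial_p s^*(M+\e_k,\cdot)\geq\sigma\geq\rho^{n,\tau}$ produces $\rho^{n+1,\tau}\leq\sigma_{\e_k}$ and $p^{n+1,\tau}\leq M+\e_k$, and sending $\e_k\to 0$ recovers the bounds through continuity of $\partial_p s^*(\cdot,x)$. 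Otherwise $\partial_p s^*(\cdot,x)$ saturates beyond $M$ to a pointwise limit $g(x)=\sigma(x)$, forcing $s(z,x)=+\infty$ for $z>\sigma(x)$; the bound $\rho^{n+1,\tau}\leq\sigma$ then follows automatically from $\rho^{n+1,\tau}\in X$, and the pressure bound is recovered by the mild mollification of $s^*$ used in the proof of Lemma \ref{lem:pressure_special}, followed by passage to the limit.
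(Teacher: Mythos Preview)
Your argument follows essentially the same route as the paper: compare with the stationary barrier $\sigma=\partial_p s^*(M,\cdot)$ via Lemma~\ref{lem:lower_bound}\ref{property: comparison with static densities}, deduce $\rho^{n+1,\tau}\leq\sigma$ and $p^{n+1,\tau}\leq\alpha_\lambda\leq M$ iteratively, and extract the $L^\infty$ bound from the pressure bound. The paper obtains the density bound from Lemma~\ref{lem:pressure_upper_bound} (which uses only \ref{assumption: convexity}--\ref{assumption: effective domain}) rather than \ref{assumption: inf sup ratio of partial s*}, but your route is equally valid in this section.

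You are actually more careful than the paper on one point: the paper asserts that $\lambda=\int_\Omega\partial_p s^*(M,x)\,dx$ satisfies \eqref{eqn: condition on admissible total mass} ``by \eqref{good_data_1}'', but \eqref{good_data_1} does not obviously rule out $\lambda=L$. Your attention to this borderline is warranted. However, your Case~1 within that analysis is vacuous: since $b\mapsto\partial_p s^*(b,x)$ is nondecreasing, one always has $\int_\Omega\partial_p s^*(M+\e,x)\,dx\geq\lambda=L$, so the strict inequality you posit never occurs. Only your Case~2 is operative, and the sketch there is correct: saturation forces $s(z,x)=+\infty$ for $z>\sigma(x)$, which immediately gives $\rho^{n+1,\tau}\leq\sigma$ from $\rho^{n+1,\tau}\in X$, and the mollification of Lemma~\ref{lem:pressure_special} (for which the corresponding $L_k$ strictly exceeds the corresponding $\lambda_k$) yields $p^*_k\leq M$, whence $p^{n+1,\tau}=\lim_k p^*_k\leq M$. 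A cleaner alternative is to note that the proof of Lemma~\ref{lem:lower_bound} never uses the upper bound $\lambda<L$ beyond ensuring that a finite $\alpha_\lambda$ exists; here $\alpha_\lambda$ is manifestly finite since $\int_\Omega\partial_p s^*(M,x)\,dx=\lambda$, so the conclusion of Lemma~\ref{lem:lower_bound}\ref{property: comparison with static densities} holds regardless.
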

\begin{proof}
Let
$$
\lambda = \int_\Omega \partial_p s^*(M,x)\,dx.
$$
By \eqref{good_data_1}, it satisfies \eqref{eqn: condition on admissible total mass}.
By Lemma \ref{lem:lower_bound}\ref{property: minimal pressure for static density}, $\rho_\lambda = \partial_p s^*(M,x)$ a.e..
Using Lemma \ref{lem:lower_bound}\ref{property: comparison with static densities} iteratively, for all $n$ and $\tau$, we have that $\rho^{n,\tau}\leq \rho_\lambda$ a.e., so \eqref{good_data_1} is carried over.
Beside, the smallest $c$-concave maximizer $p^{n,\tau}$ satisfies $p^{n,\tau}\leq \alpha_\lambda$.
Here $\alpha_\lambda$ is introduced in Lemma \ref{lem:lower_bound}.
Then the density bound follows from Lemma \ref{lem:pressure_upper_bound}, and the pressure bound follows from the fact $\alpha_\lambda\leq M$ due to the minimality of $\alpha_\lambda$.
\end{proof}

Now that we have a bound on the pressure gradient, we can bound discrete time derivatives of the density in $L^2([0,T]; H^{-1}(\R^d))$.

\begin{lemma} \label{lem:rho_time_bound}
Let $\rho_0\in X$ satisfies \eqref{good_data_1}. Let $\rho^\tau$ be extended by zero to the entire $\R^d$ and define $\rho^\tau(x,t):=\rho_0(x)$ for $t<0$.
Define $\sigma_{-\tau}\rho^\tau(x,t) := \rho^\tau(x,t-\tau)$.
Then
\begin{equation*}
\int_0^{\infty} \tau^{-2}\norm{\rho^{\tau}-\sigma_{-\tau}\rho^{\tau}}_{H^{-1}(\R^d)}^2\,dt\leq C(s,E(\rho_0),M),
\end{equation*}
where $M$ is given in \eqref{good_data_1}.
\end{lemma}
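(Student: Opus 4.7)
The plan is to control each increment $\|\rho^{n+1,\tau}-\rho^{n,\tau}\|_{H^{-1}(\mathbb{R}^d)}^2$ by $\tau^2$ times the local kinetic energy $\int\rho^{n+1,\tau}|\nabla p^{n+1,\tau}|^2$, and then sum over $n$ using the energy dissipation inequality in Lemma~\ref{lem: edi}. After rewriting the integral $\int_0^\infty\tau^{-2}\|\rho^\tau-\sigma_{-\tau}\rho^\tau\|_{H^{-1}}^2\,dt$ as $\tau^{-1}\sum_{n\geq 0}\|\rho^{n+1,\tau}-\rho^{n,\tau}\|_{H^{-1}}^2$, the factors of $\tau$ will rearrange so that the bound becomes $C(s,M)$ times the space-time kinetic energy, which is finite by \eqref{pressure_bound}.

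First I would use the pushforward relation $\rho^{n+1,\tau}=T_{\#}\rho^{n,\tau}$ with $T=T_{p^{n+1,\tau}}$ together with the McCann displacement interpolation $\Psi_s(y):=(1-s)y+sT(y)$ to rewrite, for any $\phi\in C_c^\infty(\mathbb{R}^d)$,
\[
\int\phi\,(\rho^{n+1,\tau}-\rho^{n,\tau})\,dx=\int[\phi(T(y))-\phi(y)]\rho^{n,\tau}(y)\,dy=\int_0^1\!\int\nabla\phi(\Psi_s(y))\cdot(T(y)-y)\rho^{n,\tau}(y)\,dy\,ds.
\]
A Cauchy--Schwarz inequality in $(s,y)$ combined with $\int|T-y|^2\rho^{n,\tau}=W_2^2(\rho^{n,\tau},\rho^{n+1,\tau})=\tau^2\int\rho^{n+1,\tau}|\nabla p^{n+1,\tau}|^2$ (which follows from \eqref{eq:quadratic_p_map}) then gives
\[
\Big|\int\phi\,(\rho^{n+1,\tau}-\rho^{n,\tau})\,dx\Big|\leq \tau\Big(\int\rho^{n+1,\tau}|\nabla p^{n+1,\tau}|^2\Big)^{1/2}\Big(\int_0^1\|\rho_s\|_{L^\infty}\,ds\Big)^{1/2}\|\nabla\phi\|_{L^2},
\]
where $\rho_s:=\Psi_{s\#}\rho^{n,\tau}$ is the interpolated density. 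Taking the supremum over $\phi$ with $\|\nabla\phi\|_{L^2}\leq 1$ converts the left-hand side into the $H^{-1}$ norm.

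The main step is a uniform-in-$s$ bound $\|\rho_s\|_{L^\infty}\leq C(s,M)$. The $c$-concavity of $p^{n+1,\tau}$ from Proposition~\ref{prop:primal_dual}, specialized to the quadratic cost, is equivalent to the convexity of $\Phi(y):=\tfrac12|y|^2+\tau p^{n+1,\tau}(y)$, so by Brenier's theorem $T=\nabla\Phi^*$ is monotone with $DT\geq 0$ a.e. Hence $D\Psi_s=(1-s)I+s\,DT\geq(1-s)I$, which yields $\det D\Psi_s\geq(1-s)^d$; a change of variables then gives $\|\rho_s\|_{L^\infty}\leq (1-s)^{-d}\|\rho^{n,\tau}\|_{L^\infty}$. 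This is bounded by $2^dC(s,M)$ for $s\in[0,1/2]$ thanks to Lemma~\ref{bounds}. For $s\in[1/2,1]$, I would replay the argument by interpolating from the opposite endpoint $\rho^{n+1,\tau}$ using the monotone map $\tilde\Psi_{\tilde s}(z):=z+\tilde s\,\tau\nabla p^{n+1,\tau}(z)=\nabla\bigl[\tfrac12|z|^2+\tilde s\tau p^{n+1,\tau}(z)\bigr]$; checking that $\rho_s=(\tilde\Psi_{1-s})_\#\rho^{n+1,\tau}$ then gives $\|\rho_s\|_{L^\infty}\leq s^{-d}\|\rho^{n+1,\tau}\|_{L^\infty}\leq 2^dC(s,M)$.

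Combining, we obtain $\|\rho^{n+1,\tau}-\rho^{n,\tau}\|_{H^{-1}(\mathbb{R}^d)}^2\leq C(s,M)\tau^2\int\rho^{n+1,\tau}|\nabla p^{n+1,\tau}|^2$. Multiplying by $\tau^{-1}$ and summing over $n\geq 0$, the right-hand side becomes $C(s,M)\int_0^\infty\!\int\rho^\tau|\nabla p^\tau|^2\,dx\,dt\leq C(s,E(\rho_0),M)$ by \eqref{pressure_bound}, finishing the proof. The only non-routine ingredient is the uniform $L^\infty$ control of the displacement interpolation $\rho_s$; splitting $[0,1]$ at $s=1/2$ and interpolating from either endpoint is what avoids the non-integrable singularity of $(1-s)^{-d}$ at $s=1$ when $d\geq 2$, and is also what makes essential use of the $c$-concavity of $p^{n+1,\tau}$.
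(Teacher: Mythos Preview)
Your proof is correct and follows essentially the same architecture as the paper's: pushforward formula, fundamental theorem of calculus along the displacement interpolation, Cauchy--Schwarz, an $L^\infty$ bound on the interpolant, and then summation via the energy dissipation inequality. The only difference is in the $L^\infty$ control of $\rho_s$: the paper simply invokes the displacement convexity of $L^\infty$-norms (citing \cite{OTAM}) to get $\|\rho_s\|_{L^\infty}\leq\max(\|\rho^{n,\tau}\|_{L^\infty},\|\rho^{n+1,\tau}\|_{L^\infty})$ directly, whereas you rederive a version of this fact by the Jacobian estimate $\det D\Psi_s\geq(1-s)^d$ together with the endpoint-switching trick at $s=1/2$. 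Your route is more hands-on and loses a harmless factor $2^d$; the paper's is a one-line citation.
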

\begin{proof}
With $f\in C_0^\infty(\R^d)$,
\[
\int_{\Omega} \frac{\rho^{n+1,\tau}(x)-\rho^{n,\tau}(x)}{\tau} f(x)\, dx=\int_{\Omega} \frac{f(x)-f\big(x+\tau\nabla p^{n+1,\tau}(x)\big)}{\tau}\rho^{n+1,\tau}(x)\, dx.
\]
Applying the fundamental theorem of calculus and Cauchy-Schwarz inequality, we derive that 
\[
\begin{split}
\int_{\Omega} \frac{\rho^{n+1,\tau}(x)-\rho^{n,\tau}(x)}{\tau} f(x)\, dx
=&\;\int_{\Omega}\int_0^1  \nabla f\big(x+\tau \theta\nabla p^{n+1,\tau}(x)\big)\cdot \nabla p^{n+1,\tau}(x)\rho^{n+1,\tau}(x)\, d\theta\, dx\\
\leq &\;\norm{\nabla f}_{L^{2}(\tilde{\rho}^{n+1,\tau})}\norm{\nabla p^{n+1,\tau}}_{L^2(\rho^{n+1,\tau})},
\end{split}
\]
where
$\tilde{\rho}^{n+1,\tau}:=\int_0^1 \rho^{n+1,\tau}_{\theta}d\theta$ and $\rho_{\theta}^{n+1,\tau}$ is the displacement interpolant $(\id +\tau\theta\nabla p^{n+1,\tau})_{\#}\rho^{n+1,\tau}$.
Since $L^p$-norms are displacement convex \cite{OTAM}, we have by Lemma \ref{bounds} that
\[
\norm{\tilde{\rho}^{n+1,\tau}}_{L^{\infty}(\Omega)}\leq \max\big(\norm{\rho^{n,\tau}}_{L^{\infty}(\Omega)}, \norm{\rho^{n+1,\tau}}_{L^{\infty}(\Omega)}\big)\leq C(s,M).
\]
Hence,
\[
\tau^{-1}\norm{\rho^{\tau}-\sigma_{-\tau}\rho^{\tau}}_{H^{-1}(\R^d)}\leq C(s,M)\norm{\nabla p^{\tau}}_{L^2(\rho^{\tau})}.
\]
Taking square integral in time and using Lemma \ref{lem: edi} yields the desired estimate.
\end{proof}

\begin{prop}\label{prop:strong_compactness}
Suppose $\rho_0\in X$ satisfying \eqref{good_data_1} and let $\rho^\tau$ be as given in \eqref{interpolation} with initial data $\rho_0$.
Then there exists $\rho\in L^\infty(\Omega_T)$, where $\Omega_T :=\Omega\times [0,T]$, such that along a subsequence, $\rho^{\tau}\to \rho$ in $L^q(\Omega_T)$ as $\tau\to 0$ for all $q\in[1,\infty)$.
\end{prop}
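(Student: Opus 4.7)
The plan is to apply the Riesz--Fréchet--Kolmogorov compactness theorem in $L^1(\R^{d+1})$, extending each $\rho^\tau$ by zero outside $\Omega_T$. Three ingredients are needed: $L^1$-boundedness, tightness, and equicontinuity of translations. Tightness is free since all $\rho^\tau$ are supported in the fixed bounded set $\Omega_T$, and $L^1$-boundedness is immediate from the uniform $L^\infty$ bound of Lemma \ref{bounds}. Equicontinuity with respect to spatial translations is precisely the content of Proposition \ref{prop:spatial_equicontinuity}. The only remaining step is equicontinuity with respect to temporal translations:
\[
\lim_{h\to 0^+}\,\sup_{0<\tau\ll T}\,\int_{\R}\int_{\R^d}|\rho^\tau(x,t+h)-\rho^\tau(x,t)|\,dx\,dt=0.
\]

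To get temporal equicontinuity I combine the discrete time-derivative control of Lemma \ref{lem:rho_time_bound} with a spatial mollification. Telescoping and Cauchy--Schwarz convert Lemma \ref{lem:rho_time_bound} into
\[
\int_{0}^{T-h}\|\rho^\tau(\cdot,t+h)-\rho^\tau(\cdot,t)\|_{H^{-1}(\R^d)}^2\,dt\leq C\,h(h+\tau),
\]
where care must be taken for $h<\tau$: the piecewise-constant-in-time structure means at most one jump of $\rho^\tau$ lies between $t$ and $t+h$, contributing only over a $t$-set of length $h$ per step, which gives the factor $h\tau$ in that regime. Next, let $\phi^\epsilon$ be a standard spatial mollifier at scale $\epsilon$. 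Using $\|\phi^\epsilon*f\|_{L^2(\R^d)}\leq C\epsilon^{-1}\|f\|_{H^{-1}(\R^d)}$ (duality, plus $\|\nabla\phi^\epsilon\|_{L^1}\lesssim\epsilon^{-1}$) together with Cauchy--Schwarz on the bounded set $\Omega$,
\[
\int_0^{T-h}\bigl\|\phi^\epsilon*\bigl(\rho^\tau(\cdot,t+h)-\rho^\tau(\cdot,t)\bigr)\bigr\|_{L^1(\Omega)}\,dt\leq C_T\,\epsilon^{-1}\sqrt{h(h+\tau)}.
\]
Splitting
\[
\rho^\tau(t+h)-\rho^\tau(t)=\bigl(\rho^\tau(t+h)-\phi^\epsilon*\rho^\tau(t+h)\bigr)+\phi^\epsilon*\bigl(\rho^\tau(t+h)-\rho^\tau(t)\bigr)+\bigl(\phi^\epsilon*\rho^\tau(t)-\rho^\tau(t)\bigr),
\]
the two outer terms are made small uniformly in $\tau$ by choosing $\epsilon$ small (Proposition \ref{prop:spatial_equicontinuity}), after which the middle term vanishes as $h\to 0$ for the fixed $\epsilon$. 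This yields the desired temporal equicontinuity.

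With the three hypotheses of Riesz--Fréchet--Kolmogorov verified, I extract a subsequence $\rho^{\tau_k}\to\rho$ strongly in $L^1(\Omega_T)$. The uniform $L^\infty$ bound passes to $\rho\in L^\infty(\Omega_T)$ by weak-$*$ compactness and uniqueness of limits. To upgrade the convergence to $L^q$ for any $q\in[1,\infty)$, I interpolate using the uniform $L^\infty$ bound:
\[
\|\rho^{\tau_k}-\rho\|_{L^q(\Omega_T)}^q\leq \|\rho^{\tau_k}-\rho\|_{L^\infty(\Omega_T)}^{q-1}\,\|\rho^{\tau_k}-\rho\|_{L^1(\Omega_T)}\leq \bigl(2C(s,M)\bigr)^{q-1}\,\|\rho^{\tau_k}-\rho\|_{L^1(\Omega_T)}\xrightarrow[k\to\infty]{}0.
\]

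The main obstacle is the temporal equicontinuity. The piecewise-constant-in-time structure of $\rho^\tau$ means the naive bound breaks down when $h<\tau$, so the telescoping argument has to be done carefully to produce a bound that tends to $0$ with $h$ uniformly in $\tau$. The mollification step must then quantitatively balance the spatial regularization scale $\epsilon$ against the vanishing $H^{-1}$ norm of the time shift: $\epsilon$ is selected first (using only the spatial modulus of continuity, independent of $h$), and then $h\to 0$ with $\epsilon$ fixed. Everything else is soft application of general principles.
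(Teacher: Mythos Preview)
Your proposal is correct and follows essentially the same route as the paper. Both arguments rest on the identical two inputs---the spatial $L^1$-equicontinuity of Proposition~\ref{prop:spatial_equicontinuity} and the $H^{-1}$ time-derivative bound of Lemma~\ref{lem:rho_time_bound}---and both bridge $H^{-1}$ to $L^1$ by spatial mollification at scale $\epsilon$; you package this as a direct verification of the Riesz--Fr\'echet--Kolmogorov hypotheses, while the paper carries out the same mechanism by hand (first extracting a weak limit, then decomposing $\rho^\tau-\rho$ via space--time mollifiers). Your telescoped bound $\int_0^{T-h}\|\rho^\tau(\cdot,t+h)-\rho^\tau(\cdot,t)\|_{H^{-1}}^2\,dt\leq Ch(h+\tau)$ is arguably cleaner, since it is valid for all $h,\tau$ simultaneously and vanishes as $h\to0$ uniformly over bounded $\tau$, whereas the paper restricts to $\delta\ll\tau$ in its time-mollification step.
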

\begin{remark}
Given Proposition \ref{prop:spatial_equicontinuity} and Lemma \ref{lem:rho_time_bound}, the proof is an adaptation of that of the Kolmogorov-M.\;Riesz-Fr\'{e}chet Theorem \cite[\S 4.5]{brezis2010functional}.
\end{remark}
\begin{proof}
Thanks to Lemma \ref{bounds}, $\{\rho^\tau\}_\tau$ is uniformly bounded in $L^\infty(\Omega_T)$.
So there exists $\rho\in L^\infty(\Omega_T)$, such that along a subsequence, $\rho^\tau \rightharpoonup \rho$ in $L^q(\Omega_T)$ as $\tau \to 0$ for all $q\in[1,\infty)$.
It then suffices to show the strong convergence for $q = 1$, as the other cases follow from the uniform boundedness of $\{\rho^\tau\}_\tau$ in $L^\infty(\Omega_T)$ and interpolation.

Let $\varphi\in C_0^\infty(\mathbb{R}^d)$ be a non-negative, radially symmetric mollifier in space, supported on $\overline{B_1(0)}$ and having integral $1$.
Let $\eta\in C_0^\infty(\mathbb{R})$ be a non-negative mollifier in time, supported on $[0,1]$ and having integral $1$.
Denote $\varphi_\e(x):= \e^{-d}\varphi(x/\e)$ and $\eta_\delta(x):= \delta^{-1}\eta(x/\delta)$.
Then for any $\e,\delta>0$, $\{\eta_\delta*\varphi_\e*\rho^\tau \}_\tau$ is uniformly bounded and they pointwise converge to $\eta_\delta*\varphi_\e*\rho$ in the space-time as $\tau \to 0$ along the subsequence.
Then the dominated convergence theorem implies $\eta_\delta*\varphi_\e*\rho^\tau \to \eta_\delta*\varphi_\e*\rho$ in $L^1(\Omega_T)$.

Then we deduce that for arbitrary $0<\tau\ll T$,
\begin{equation*}
\begin{split}
\|\rho^{\tau} -\rho\|_{L^1(\Omega_T)}
\leq &\; \|\rho^{\tau} -\varphi_\e*\rho^{\tau}\|_{L^1(\Omega_T)} +\|\varphi_\e*\rho^{\tau} -\eta_\delta*\varphi_\e*\rho^{\tau}\|_{L^1(\Omega_T)}\\
&\;+\|\eta_\delta*\varphi_\e*\rho^{\tau} -\eta_\delta*\varphi_\e*\rho\|_{L^1(\Omega_T)}+\| \rho-\eta_\delta*\varphi_\e*\rho\|_{L^1(\Omega_T)}.
\end{split}
\end{equation*}
Note that $\|\rho -\eta_\delta*\varphi_\e*\rho\|_{L^1(\Omega_T)} \to 0$ as $\e,\delta \to 0$, since $\rho\in L^1(\Omega_T)$. Hence
 in order to prove $\|\rho^\tau -\rho\|_{L^1(\Omega_T)}\to 0$ as $\tau \to 0$, it suffices to show that
\begin{enumerate}
  \item \label{task: the epsilon limit space}$\|\rho^\tau -\varphi_\e*\rho^\tau\|_{L^1(\Omega_T)}\to 0$ as $\e\to 0$ uniformly in $\tau$;
  \item \label{task: the delta limit time}for any fixed $\e>0$, $\|\varphi_\e*\rho^\tau -\eta_\delta*\varphi_\e*\rho^\tau\|_{L^1(\Omega_T)}\to 0$ as $\delta \to 0$ uniformly in $\tau$.
\end{enumerate}

The first convergence can be justified by Proposition \ref{prop:spatial_equicontinuity}.
Indeed,
\begin{equation*}
\sup_{0<\tau\ll T}\|\rho^\tau-\varphi_\e*\rho^\tau\|_{L^1(\Omega_T)}\leq \int_{B_1(1)} \varphi (y)\sup_{0<\tau\ll T}\|\rho^\tau(x, t)-\rho^\tau(x - \e y,t)\|_{L^1(\Omega_T)}\,dy.
\end{equation*}
Then the dominated convergence theorem applies.
To show the second one, we note that for fixed $\e>0$, $\tau \ll T$, and $\psi\in L^{\infty}(\R^d)$,
\begin{equation*}
\begin{split}
\left|\int_{\R^d} \varphi_\e*(\rho^\tau - \sigma_{-\tau}\rho^\tau)\cdot \psi\,dx\right|
= &\;\left|\int_{\R^d} (\rho^\tau - \sigma_{-\tau}\rho^\tau)\cdot \varphi_\e*\psi\,dx\right|\\
\leq&\; C(\e)\|\rho^\tau - \sigma_{-\tau}\rho^\tau\|_{H^{-1}(\R^d)}\|\psi\|_{L^\infty(\R^d)},
\end{split}
\end{equation*}
which implies
\begin{equation*}
\|\varphi_\e*(\rho^\tau - \sigma_{-\tau}\rho^\tau)\|_{L^1(\R^d)}\leq C(\e)\|\rho^\tau - \sigma_{-\tau}\rho^\tau\|_{H^{-1}(\R^d)}.
\end{equation*}
Now we derive that
\begin{equation*}
\begin{split}
&\;\|\varphi_\e*\rho^\tau -\eta_\delta*\varphi_\e*\rho^\tau\|_{L^1(\Omega_T)}\\
\leq &\;
\left\| \int_{0}^\delta\eta_\delta(s)\|\varphi_\e*\rho^\tau (x,t) -\varphi_\e*\rho^\tau(x,t-s)\|_{L^1(\R^d)}\,ds \right\|_{L^1([0,T])}.
\end{split}
\end{equation*}
Assume $\delta \ll \tau$.
The integrand here is non-zero only when $t$ and $(t-s)$ do not belong to the same small interval of the form $[n\tau,(n+1)\tau)$.
So $t$ must lie in the right $\delta$-neighborhood of at least one of $\{0,\tau,\cdots, N_\tau\tau\}$.
Hence, with $T'$ defined in Lemma \ref{lem: edi},
\begin{equation*}
\begin{split}
&\;\|\varphi_\e*\rho^\tau -\eta_\delta*\varphi_\e*\rho^\tau\|_{L^1(\Omega_T)}\\
\leq &\; \sum_{n = 0}^{N_\tau}\int_{n\tau}^{n\tau+\delta} \int_{0}^\delta\eta_\delta(s)\|\varphi_\e*\rho^\tau (x,t) -\varphi_\e*\rho^\tau(x,t-s)\|_{L^1(\R^d)}\,ds \,dt\\
\leq &\; \sum_{n = 0}^{N_\tau}C\delta \|\varphi_\e*\rho^{n,\tau} -\varphi_\e*\rho^{n+1,\tau}\|_{L^1(\R^d)}\\
\leq &\; C(\epsilon)\delta\int_{0}^{T'}\tau^{-1}\|\rho^{\tau} -\sigma_{-\tau}\rho^{\tau}\|_{H^{-1}(\R^d)}\,dt\\
\leq &\; C(\e,T,s,E(\rho_0),M)\delta.
\end{split}
\end{equation*}
We used Lemma \ref{lem:rho_time_bound} in the last inequality.
Thus the second convergence follows.
\end{proof}

\section{Convergence to the Continuum Limit}
\label{sec: continumm limit}

In this section, we show that our discrete approximation $(\rho^{\tau}, p^{\tau})$ yields a weak solution of $(P)$ in the continuum limit $\tau\to 0$.
We first address the case of strictly positive initial data.

\begin{theorem}\label{thm:pde_existence1}
Let $\rho_0\in X$ satisfies \eqref{good_data_1}.
In addition, suppose $\rho_0\geq \rho_\lambda$ for some $\lambda>0$, where $\rho_\lambda$ is defined in Lemma \ref{lem:lower_bound}.
Fix $T>0$.
There exist $\rho\in L^{\infty}(\Omega_T)$ and $p\in L^2\big([0,T];H^1(\Omega)\big)\cap L^{\infty}(\Omega_T) $ being a weak solution of $(P)$, such that the following holds.
\begin{enumerate}[label=(\alph*)]
\item\label{L^1 convergence positive case}
$\rho^{\tau} \to \rho$ in $L^1(\Omega_T)$ along a subsequence;
\item\label{convergence of p tau to p with positive density} along a subsequence, $p^{\tau}\rightharpoonup p$ weakly in $L^2([0,T]; H^1(\Omega))$ and weak-$*$ in $L^\infty(\Omega_T)$, and $\rho^{\tau}\nabla p^{\tau}\rightharpoonup \rho\nabla p$ in $L^2(\Omega_T)$; moreover, $\rho |\nabla p|^2\leq \liminf_{\tau \to 0}\rho^\tau|\nabla p^\tau|^2$ and $p\leq M$ in $\Omega_T$;
\item \label{relation between p and rho positive case}
$p(x,t)\in \partial s(\rho(x,t),x)$ a.e.\;and $\rho(x,t) = \partial_p s^*(p(x,t),x)$ a.e.\;in $\Omega_T$;
and
\item \label{weak formulation positive case} For a.e.\;$t_0\in[0,T]$,
\begin{equation}\label{weak}
\int_0^{t_0}\int_{\Omega} \rho \partial_t \phi-\rho\nabla p\cdot \nabla \phi\, dx\, dt=\int_{\Omega} \rho(x,t_0)\phi(x,t_0)-\rho_0(x)\phi(0,x)\,dx
\end{equation}
any $\phi \in C^{\infty}(\Omega_T)$.
\end{enumerate}
\end{theorem}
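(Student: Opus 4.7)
The plan is to extract weak subsequential limits of $\rho^\tau$ and $p^\tau$, identify them pointwise via Minty's monotonicity trick, and pass to the limit in the discrete Euler--Lagrange equation. Part \ref{L^1 convergence positive case} is immediate from Proposition \ref{prop:strong_compactness}. For part \ref{convergence of p tau to p with positive density}, the hypothesis $\rho_0\geq\rho_\lambda$ iterates through Lemma \ref{lem:lower_bound}\ref{property: comparison with static densities} to yield $\rho^\tau\geq a_\lambda>0$ a.e.\;and $p^\tau\geq\alpha_\lambda$ at every step; combined with Lemma \ref{bounds}, this gives $p^\tau\in[\alpha_\lambda,M]$ uniformly in $\Omega_T$. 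The energy dissipation inequality then yields
\[
\|\nabla p^\tau\|_{L^2(\Omega_T)}^2\leq a_\lambda^{-1}\int_0^T\int_\Omega\rho^\tau|\nabla p^\tau|^2\,dx\,dt\leq C,
\]
so $p^\tau$ is uniformly bounded in $L^2([0,T];H^1(\Omega))\cap L^\infty(\Omega_T)$; extracting subsequences gives the weak and weak-$*$ limits $p$ with $p\leq M$. The convergence $\rho^\tau\nabla p^\tau\rightharpoonup\rho\nabla p$ in $L^2$ follows from weak-strong pairing: dominated convergence (using the $L^\infty$-bound and the $L^q$-strong convergence in Proposition \ref{prop:strong_compactness}) gives $\rho^\tau\psi\to\rho\psi$ strongly in $L^2$ for any $\psi\in L^2(\Omega_T;\R^d)$, which pairs against $\nabla p^\tau\rightharpoonup\nabla p$ in $L^2$. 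The lower semicontinuity for $\rho|\nabla p|^2$ follows from $\sqrt{\rho^\tau}\nabla p^\tau\rightharpoonup\sqrt{\rho}\nabla p$ weakly in $L^2$ and the lower semicontinuity of the $L^2$-norm.

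For part \ref{relation between p and rho positive case}, I apply Minty's monotonicity trick to identify $p$. By Proposition \ref{prop:primal_dual}, $\rho^\tau(x,t)=\partial_p s^*(p^\tau(x,t),x)$ a.e., and monotonicity of $\partial_p s^*(\cdot,x)$ yields, for any smooth bounded $q:\Omega_T\to\R$ and any non-negative $\phi\in C_c^\infty(\Omega_T)$,
\[
\int_{\Omega_T}\phi\,(p^\tau-q)\bigl(\rho^\tau-\partial_p s^*(q,\cdot)\bigr)\,dx\,dt\geq 0.
\]
Expanding the product, each of the four cross terms passes to the limit by the weak-strong pairing ($p^\tau\rightharpoonup p$ weakly in $L^2$, $\rho^\tau\to\rho$ strongly in $L^2$, and $\partial_p s^*(q,\cdot)\in L^\infty(\Omega_T)$ by Lemma \ref{lem:pressure_upper_bound}), so the inequality holds for $(p,\rho)$. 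Taking $q=p\pm\e\psi$ for smooth $\psi$ and sending $\e\to 0$, continuity of $\partial_p s^*(\cdot,x)$ (assumption \ref{assumption: regularity of s*}) and dominated convergence give $\rho=\partial_p s^*(p,\cdot)$ a.e.; by Lemma \ref{lem:dual_relation} this is equivalent to $p\in\partial s(\rho,\cdot)$ a.e.

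For part \ref{weak formulation positive case}, the transport identity $T_{p^{n+1,\tau}}^{-1}\#\rho^{n+1,\tau}=\rho^{n,\tau}$ with $T_{p^{n+1,\tau}}^{-1}(x)=x+\tau\nabla p^{n+1,\tau}(x)$ and the fundamental theorem of calculus give, for any $\phi\in C^\infty(\Omega_T)$,
\[
\int_\Omega(\rho^{n+1,\tau}-\rho^{n,\tau})\phi(\cdot,n\tau)\,dx=-\tau\int_\Omega\rho^{n+1,\tau}\int_0^1\nabla\phi\bigl(x+\tau\theta\nabla p^{n+1,\tau},n\tau\bigr)\cdot\nabla p^{n+1,\tau}\,d\theta\,dx.
\]
Summing for $0\leq n\leq N-1$ with $N:=\lfloor t_0/\tau\rfloor$, Abel summation on the LHS produces the telescoping boundary terms together with the discrete time derivative $\sum_{n=1}^{N-1}\tau\int_\Omega\rho^{n,\tau}\frac{\phi(\cdot,n\tau)-\phi(\cdot,(n-1)\tau)}{\tau}\,dx$, which converges for a.e.\;$t_0$ to $\int_\Omega\rho(\cdot,t_0)\phi(\cdot,t_0)-\int_\Omega\rho_0\phi(\cdot,0)\,dx-\int_0^{t_0}\int_\Omega\rho\,\partial_t\phi\,dx\,dt$ by strong $L^1$-convergence of $\rho^\tau$ and smoothness of $\phi$. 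On the RHS, replacing $\nabla\phi(x+\tau\theta\nabla p^{n+1,\tau},n\tau)$ by $\nabla\phi(x,n\tau)$ incurs an error bounded by $\frac{\tau}{2}\|\nabla^2\phi\|_\infty\int\rho^{n+1,\tau}|\nabla p^{n+1,\tau}|^2\,dx$ per step, which vanishes after summing and multiplying by $\tau$ by Lemma \ref{lem: edi}; the remaining term is a Riemann sum converging to $-\int_0^{t_0}\int_\Omega\rho\nabla p\cdot\nabla\phi\,dx\,dt$ via the weak convergence $\rho^\tau\nabla p^\tau\rightharpoonup\rho\nabla p$ from \ref{convergence of p tau to p with positive density}. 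The main technical challenge is the Minty identification in \ref{relation between p and rho positive case}, since $p^\tau$ converges only weakly and one cannot pass directly to the limit inside $\partial_p s^*$; however, the uniform $L^\infty$-bounds from $\rho_0\geq\rho_\lambda$ together with the strong $L^2$-convergence of $\rho^\tau$ reduce it to the standard weak-strong pairing followed by the $\e\to 0$ limit.
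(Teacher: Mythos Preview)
Your proof is correct and largely parallels the paper for parts \ref{L^1 convergence positive case}, \ref{convergence of p tau to p with positive density}, and \ref{weak formulation positive case}: the same barrier argument via Lemma~\ref{lem:lower_bound} for the uniform bounds on $p^\tau$, the same use of the energy dissipation inequality for the $L^2$-bound on $\nabla p^\tau$, and the same Taylor-expansion/summation-by-parts argument for the discrete weak formulation (which the paper isolates as Lemma~\ref{continuity}).

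The genuine difference lies in part \ref{relation between p and rho positive case}. You use Minty's monotonicity trick, passing the inequality $(p^\tau-q)(\rho^\tau-\partial_p s^*(q,\cdot))\geq 0$ to the limit via weak-strong pairing and then localizing with $q=p\pm\varepsilon\psi$. The paper instead exploits the Fenchel--Young equality $\rho^\tau p^\tau=s(\rho^\tau,\cdot)+s^*(p^\tau,\cdot)$ directly: the left side converges by weak-strong pairing, while on the right side one uses lower semicontinuity of $s(\cdot,x)$ together with the a.e.\ convergence of $\rho^\tau$ (Fatou) and the convexity inequality $s^*(p^\tau,x)\geq s^*(p,x)+\partial_p s^*(p,x)(p^\tau-p)$; the linear remainder vanishes by weak-$*$ convergence of $p^\tau$. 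This yields $\rho p\geq s(\rho,\cdot)+s^*(p,\cdot)$, and Young's inequality forces equality. Your route is the textbook PDE device for identifying weak limits in monotone nonlinearities and relies only on the continuity of $\partial_p s^*(\cdot,x)$ from \ref{assumption: regularity of s*}; the paper's route is shorter and uses only convexity and lower semicontinuity, avoiding the $\varepsilon\to 0$ step entirely. One small point: you restrict to smooth $q$ but then take $q=p\pm\varepsilon\psi$, which is only measurable and bounded; since $\partial_p s^*(q(x,t),x)\in L^\infty$ for any bounded measurable $q$ (Lemma~\ref{lem:pressure_upper_bound}), you should simply allow such $q$ from the outset.
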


For general initial data $\rho_0\in X$ satisfying \eqref{good_data_1}, continuum solutions of $(P)$ of a weaker sense are obtained, which are similar to those given by \cite{Carrillo}.

\begin{theorem}\label{thm:pde_existence2}
Let $\rho_0\in X$ satisfies \eqref{good_data_1}.
Fix $T>0$.
There exists $\rho\in L^{\infty}(\Omega_T)$ and a measurable $p$ with $p\leq M$ in $\Omega_T$, such that
\begin{enumerate}[label=(\alph*)]
 \item \label{L^1 convergence to continum limit}$\rho^{\tau} \to \rho$ in $L^1(\Omega_T)$ along a subsequence;
 \item\label{relation between p and rho} $p(x,t)\in \partial s(\rho(x,t),x)$ a.e.\;and $\rho(x,t) = \partial_p s^*(p(x,t),x)$ a.e.\;in $\Omega_T$;
 \item\label{weighted_pressure} With $m(x,t) := \nabla [s^*(p(x,t),x)] - \partial_x s^*(p(x,t),x)\in L^2(\Omega_T)$, we have for a.e.\;$t_0\in [0,T]$,
\begin{equation}\label{weak_eq}
\int_0^{t_0}\int_{\Omega} \rho\partial_t \phi-m\cdot \nabla \phi\, dx\, dt=\int_{\Omega} \rho(x,t_0)\phi(x,t_0)-\rho_0(x)\phi(x,0)\,dx
\end{equation}
 for any $\phi\in C^{\infty}(\Omega_T)$.
 \item\label{relation between m and grad p} $\frac{m}{\rho} = \nabla p$ in the support of $\rho$, in the sense that
 \begin{equation}\label{int_parts}
\int_0^T \int_{\Omega} \frac{m}{\rho} \cdot f = -\int_0^T\int_{\Omega} p \nabla\cdot f\, dx
 \end{equation}
for any $f\in \mathcal{T}$, where
\[
\mathcal{T}:=\Big\{f\in L^2([0,T]; H^1(\Omega)): f\cdot n |_{\partial \Omega \times [0,T]} = 0,\; \|\rho^{-1}f\|_{L^2(\Omega_T)}+\|\rho^{-1}\nabla\cdot f\|_{L^1(\Omega)}<+\infty\Big\}.
\]
\end{enumerate}
\end{theorem}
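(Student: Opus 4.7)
The plan is to construct a solution for possibly degenerate initial data by approximation with strictly positive data (to which Theorem~\ref{thm:pde_existence1} applies) and to pass to the limit using the $L^1$-contraction principle. For each $k\in\mathbb{Z}_+$, I will set $\rho_{0,k}:=\max(\rho_0,\rho_{1/k})$ where $\rho_{1/k}$ is the positive stationary density of Lemma~\ref{lem:lower_bound}; then $\rho_{0,k}$ still satisfies \eqref{good_data_1}, lies above the positive constant $a_{1/k}$, and converges to $\rho_0$ in $L^1(\Omega)$. Theorem~\ref{thm:pde_existence1} then produces weak solutions $(\rho_k,p_k)$ with $p_k\in L^2([0,T];H^1(\Omega))\cap L^\infty(\Omega_T)$, $p_k\leq M$, and the strong weak formulation with drift $\rho_k\nabla p_k$. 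Using Proposition~\ref{prop:strong_compactness} to extract $\rho^\tau\to\rho$ in $L^1(\Omega_T)$ and iterating the discrete $L^1$-contraction (Theorem~\ref{thm:l1_contraction}) gives $\|\rho_k^{n,\tau}-\rho^{n,\tau}\|_{L^1(\Omega)}\leq\|\rho_{0,k}-\rho_0\|_{L^1(\Omega)}$ for every $n,\tau$, so $\rho_k\to\rho$ in $L^\infty([0,T];L^1(\Omega))$ after passing $\tau\to 0$. Moreover, $\rho_k\geq\rho$ and $\rho_k$ is monotone decreasing in $k$ by the comparison principle (Theorem~\ref{thm: direct comparison of rho and p}).

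Next I compactify the fluxes. Setting $m_k:=\rho_k\nabla p_k$ and invoking $\rho_k=\partial_p s^*(p_k,x)$, the chain rule gives $m_k=\nabla[s^*(p_k,x)]-\partial_x s^*(p_k,x)$. The energy dissipation of Lemma~\ref{lem: edi}, the $L^\infty$-bound of Lemma~\ref{bounds}, and assumption~\ref{assumption: regularity of s*} together bound $m_k$ and $\partial_x s^*(p_k,\cdot)$ uniformly in $L^2(\Omega_T)$; in particular $\sigma_k:=s^*(p_k,x)$ is bounded in $L^2([0,T];H^1(\Omega))$, and Lemma~\ref{lem:pressure_upper_bound} further places $\sigma_k$ in $L^\infty(\Omega_T)$ uniformly. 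I extract weak-$L^2$ limits $m_k\rightharpoonup m$ and $\sigma_k\rightharpoonup \sigma$ in $L^2([0,T];H^1(\Omega))$.

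I then define $p(x,t)$ as a measurable selection from $\partial s(\rho(x,t),x)$; the bound $\rho\leq \partial_p s^*(M,\cdot)$ inherited from Lemma~\ref{bounds} forces $p_+\leq M$, and $\rho=\partial_p s^*(p,x)$ holds by Lemma~\ref{lem:dual_relation}. By convex duality the value $s^*(p,x)=p\rho-s(\rho,x)$ is independent of the chosen selection and depends continuously on $\rho$, so dominated convergence gives $\sigma_k\to s^*(p,x)$ in $L^q(\Omega_T)$ for all finite $q$, identifying $\sigma=s^*(p,x)$. To conclude $m=\nabla\sigma-\partial_x s^*(p,x)$ it suffices to show $\partial_x s^*(p_k,\cdot)\to \partial_x s^*(p,\cdot)$ in $L^2$: on $\{\rho>0\}$ the strict convexity of $s^*$ (from assumption~\ref{assumption: regularity of s*}) together with the monotone decay $\rho_k\searrow\rho$ forces $p_k\searrow p$ a.e., so continuity of $\partial_x s^*$ in $p$ combined with the uniform integrability in assumption~\ref{assumption: regularity of s*} yields convergence; on $\{\rho=0\}$, $s^*(\cdot,x)$ is flat on the relevant range, so $\partial_x s^*(p_k,\cdot)$ and $\partial_x s^*(p,\cdot)$ both vanish. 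Passing to the limit in the weak formulation of $(\rho_k,p_k)$ then gives \eqref{weak_eq} via strong $L^1$-convergence of $\rho_k$ and weak $L^2$-convergence of $m_k$.

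For the integration-by-parts identity, each $(\rho_k,p_k)$ satisfies $\int(m_k/\rho_k)\cdot f=-\int p_k\,\nabla\cdot f$ for any $f\in\mathcal{T}$ by ordinary $H^1$-integration by parts (the boundary contribution vanishes since $f\cdot n=0$ on $\partial\Omega$). The left-hand side converges to $\int(m/\rho)\cdot f$ because $\rho_k\geq\rho$ gives $|f/\rho_k|\leq|f/\rho|\in L^2$, hence $f/\rho_k\to f/\rho$ in $L^2$ by dominated convergence, which paired with $m_k\rightharpoonup m$ weakly in $L^2$ yields the limit. On the right, $\|\rho^{-1}\nabla\cdot f\|_{L^1}<\infty$ forces $\nabla\cdot f=0$ a.e.\;on $\{\rho=0\}$, so the integral reduces to $\{\rho>0\}$; there, $|p_k\nabla\cdot f|=|p_k\rho_k|\cdot\rho_k^{-1}|\nabla\cdot f|\leq C\rho^{-1}|\nabla\cdot f|\in L^1$ by Lemma~\ref{lem:pressure_upper_bound} and $\rho_k\geq\rho$, and $p_k\to p$ a.e.\;on $\{\rho>0\}$, so dominated convergence closes the argument. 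The principal technical obstacle is the identification of $m$ in the third paragraph: one must reconcile the weak-$L^2$ limit of $\rho_k\nabla p_k$ with the potential degeneration $p_k\to-\infty$ on the vacuum set and the possible non-uniqueness of $p_k$ there. The resolution is to work through $\sigma_k=s^*(p_k,x)$, which is uniquely determined by $\rho_k$ and converges strongly, and to split the identification of $\partial_x s^*(p_k,\cdot)$ between the positive-density and vacuum regions.
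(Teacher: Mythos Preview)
Your overall strategy---approximate $\rho_0$ from above by $\rho_{0,k}=\max(\rho_0,\rho_{1/k})$, apply Theorem~\ref{thm:pde_existence1} to obtain $(\rho_k,p_k)$, use the discrete $L^1$-contraction and comparison to get $\rho_k\searrow\rho$, extract a weak $L^2$-limit $m$ of $m_k=\rho_k\nabla p_k$, and pass to the limit---is exactly the paper's approach, and the treatment of part~\ref{relation between m and grad p} is essentially the paper's Lemma~\ref{convergence:further}.

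The genuine gap is in how you define $p$. You take $p$ to be an \emph{arbitrary} measurable selection from $\partial s(\rho(x,t),x)$ and then claim that $s^*(p,x)=p\rho-s(\rho,x)$ is ``independent of the chosen selection''. This is false: if $\rho>0$ and $p_1,p_2\in\partial s(\rho,x)$ with $p_1\neq p_2$, then $s^*(p_i,x)=p_i\rho-s(\rho,x)$ differ by $(p_1-p_2)\rho\neq 0$. Relatedly, your assertion that assumption~\ref{assumption: regularity of s*} yields strict convexity of $s^*$ is incorrect: \ref{assumption: regularity of s*} gives differentiability of $s^*$, which is equivalent to strict convexity of $s$ (so $\partial s^*$ is single-valued), \emph{not} strict convexity of $s^*$ (which would make $\partial s$ single-valued). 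Consequently $\partial s(\rho,x)$ can be a nondegenerate interval even where $\rho>0$, your selection is genuinely non-unique there, and the claim ``$p_k\searrow p$ a.e.\ on $\{\rho>0\}$'' has no content for an arbitrary selection. This breaks both your identification $\sigma=s^*(p,x)$ and your dominated-convergence argument for the right-hand side of \eqref{int_parts}.

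The paper repairs this by \emph{defining} $p$ as the pointwise limit of the $p_k$. The comparison principle for the smallest $c$-concave pressures (Theorem~\ref{thm: direct comparison of rho and p}) gives $p_k^\tau$ monotone decreasing in $k$; this passes to the continuum limits $p_k$, so $p:=\lim_k p_k$ exists everywhere in $[-\infty,M]$. With this definition, $p_k\to p$ pointwise on all of $\Omega_T$, so the continuity of $\partial_x s^*(\cdot,x)$ in \ref{assumption: regularity of s*} directly yields $\partial_x s^*(p_k,x)\rightharpoonup\partial_x s^*(p,x)$ and hence $m=\nabla[s^*(p,x)]-\partial_x s^*(p,x)$; no separate argument on $\{\rho>0\}$ versus $\{\rho=0\}$ is needed. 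Part~\ref{relation between p and rho} then follows by passing to the limit in the Fenchel--Young equality $\rho_k p_k=s(\rho_k,x)+s^*(p_k,x)$. Your argument for \ref{relation between m and grad p} is fine once $p$ is defined this way. (Your claim that $\partial_x s^*(p_k,\cdot)$ \emph{vanishes} on $\{\rho=0\}$ is also not quite right---$\rho_k>0$ there---though it is true that it converges to zero; the pointwise-limit definition of $p$ makes this automatic.)
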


\begin{remark}
\begin{enumerate}
\item Our formulation of the weak solutions in Theorem~\ref{thm:pde_existence2} by \ref{weighted_pressure} coincides with the one in \cite{Carrillo}. We give an additional description of the transport velocity $-\nabla p$ by \ref{relation between m and grad p}, albeit with limited class of test functions.
\item   The continuum pressure $p$ in Theorem \ref{thm:pde_existence2} is obtained by an approximation argument using the pressure corresponding to a strictly positive initial density that is obtained in Theorem~\ref{thm:pde_existence1}.
    It is not clear whether it is always possible to recover $p$ from the discrete solution $p^{\tau}$.

\item  The set $\mathcal{T}$ of test functions in \ref{relation between m and grad p} is non-empty.
    See the proof of Theorem \ref{thm:pde_existence2}.
\end{enumerate}
\end{remark}

To prove these theorems, let us first show that the discrete solutions approximately satisfy the weak form of the continuum PDE $(P)$.

\begin{lemma}\label{continuity}
Fix $T>0$.
$(\rho^{\tau}, p^{\tau})$ satisfies
\begin{equation}
\int_{0}^{t_0} \int_{\Omega} \rho^{\tau} \partial_t\phi -\rho^{\tau}\nabla p^{\tau}\cdot \nabla \phi \, dx\,dt= \int_{\Omega} \rho^{\tau}(x,t_0)\phi(x,t_0) - \rho_0(x) \phi(x,0) \,dx +\epsilon_\tau \|\phi\|_{C^2(\Omega_T)},
\label{eqn: discrete weak formula almost equality}
\end{equation}
for all $\phi\in C_0^2(\Omega_T)$ and $t_0\in [2\tau, T]$.
Here $\epsilon_\tau$ satisfies $|\epsilon_\tau|\leq C(T,|\Omega|,s,\rho_0,M)\tau^{1/2}$.
\end{lemma}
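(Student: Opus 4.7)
My plan is to prove the identity by Abel summation in time combined with a second-order Taylor expansion along the optimal displacement, with the errors controlled by the energy dissipation bound from Lemma~\ref{lem: edi}. The starting observation is that for each $n$, by \eqref{eq:quadratic_p_map} and Theorem~\ref{thm:fund_ot}, the map $T^{-1}_{p^{n+1,\tau}}(x)=x+\tau\nabla p^{n+1,\tau}(x)$ pushes $\rho^{n+1,\tau}$ forward to $\rho^{n,\tau}$. Thus for any $\phi\in C_0^2(\Omega_T)$ and each $n$,
\[
\int_\Omega \big(\rho^{n+1,\tau}-\rho^{n,\tau}\big)\phi(x,n\tau)\,dx
= -\int_\Omega \big[\phi\big(x+\tau\nabla p^{n+1,\tau}(x),n\tau\big)-\phi(x,n\tau)\big]\rho^{n+1,\tau}(x)\,dx,
\]
and a second-order Taylor expansion of the bracket yields
\[
\int_\Omega \big(\rho^{n+1,\tau}-\rho^{n,\tau}\big)\phi(x,n\tau)\,dx = -\tau\int_\Omega \nabla\phi(x,n\tau)\cdot\nabla p^{n+1,\tau}\,\rho^{n+1,\tau}\,dx + R_n,
\]
with $|R_n|\leq \tfrac{1}{2}\tau^2\|\phi\|_{C^2}\int_\Omega |\nabla p^{n+1,\tau}|^2\rho^{n+1,\tau}\,dx$.

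Next, setting $N:=\lfloor t_0/\tau\rfloor\geq 2$, I would apply Abel summation to the decomposition
\[
\int_0^{t_0}\int_\Omega \rho^\tau\partial_t\phi\,dx\,dt = \sum_{n=0}^{N-1}\int_\Omega \rho^{n+1,\tau}\big[\phi(\cdot,(n+1)\tau)-\phi(\cdot,n\tau)\big]\,dx + \int_\Omega \rho^{N+1,\tau}\big[\phi(\cdot,t_0)-\phi(\cdot,N\tau)\big]\,dx,
\]
which telescopes to the exact identity
\[
\int_0^{t_0}\int_\Omega \rho^\tau\partial_t\phi\,dx\,dt = \int_\Omega\big[\rho^\tau(x,t_0)\phi(x,t_0)-\rho_0(x)\phi(x,0)\big]\,dx - \sum_{n=0}^{N}\int_\Omega \big(\rho^{n+1,\tau}-\rho^{n,\tau}\big)\phi(x,n\tau)\,dx.
\]
Substituting the per-step Taylor formula into the last sum produces the leading piece $-\sum_{n=0}^N \tau\int_\Omega\nabla\phi(\cdot,n\tau)\cdot\nabla p^{n+1,\tau}\rho^{n+1,\tau}\,dx$ plus $\sum_n R_n$, which up to further errors matches the target $-\int_0^{t_0}\int_\Omega \rho^\tau\nabla p^\tau\cdot\nabla\phi\,dx\,dt$.

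It then remains to estimate three error sources; let $a_n:=\int_\Omega|\nabla p^{n+1,\tau}|^2\rho^{n+1,\tau}\,dx$, so that \eqref{pressure_bound} gives $\sum_n \tau a_n \leq C(s,E(\rho_0))$. First, $|\sum_n R_n|\leq \tfrac{1}{2}\tau^2\|\phi\|_{C^2}\sum_n a_n = O(\tau)\|\phi\|_{C^2}$. Second, on each full subinterval $[n\tau,(n+1)\tau)$, replacing $\tau\nabla\phi(x,n\tau)$ by $\int_{n\tau}^{(n+1)\tau}\nabla\phi(x,t)\,dt$ incurs at most $\tau^2\|\phi\|_{C^2}\int_\Omega|\nabla p^{n+1,\tau}|\rho^{n+1,\tau}\leq \tau^2\|\phi\|_{C^2}\big(\int_\Omega\rho_0\big)^{1/2}a_n^{1/2}$ by Cauchy--Schwarz, and summing via $\sum_{n=0}^{N-1}a_n^{1/2}\leq \sqrt{N}\,(\sum_n a_n)^{1/2}$ together with $N\leq T/\tau$ again yields $O(\tau)\|\phi\|_{C^2}$. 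The \emph{main obstacle}, responsible for the rate being $\tau^{1/2}$ rather than $\tau$, is the boundary-in-time contribution: the partial-interval term $\int_{N\tau}^{t_0}\int_\Omega\rho^{N+1,\tau}\nabla p^{N+1,\tau}\cdot\nabla\phi\,dx\,dt$ and the extra index $n=N$ in the Riemann sum each contribute at the order $\tau\|\phi\|_{C^2}\big(\int_\Omega\rho_0\big)^{1/2}a_N^{1/2}$, and at a single index only the worst-case bound $a_N\leq C/\tau$ from the total dissipation is available, producing the residue $O(\tau^{1/2})\|\phi\|_{C^2}$. Collecting all estimates then yields $|\epsilon_\tau|\leq C(T,|\Omega|,s,\rho_0,M)\tau^{1/2}$, as required.
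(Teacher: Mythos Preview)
Your proof is correct and follows essentially the same route as the paper: Abel summation in time, the pushforward identity $T^{-1}_{p^{n+1,\tau}\,\#}\rho^{n+1,\tau}=\rho^{n,\tau}$, a second-order Taylor expansion along the displacement, and the energy dissipation inequality to control the residues. The one organizational difference is in the boundary-in-time error: the paper routes this through the $H^{-1}$ bound of Lemma~\ref{lem:rho_time_bound} (which is where the dependence on $M$ enters), whereas you bound the single bad index $n=N$ directly via Cauchy--Schwarz and the crude estimate $a_N\leq C/\tau$ coming from $\sum_n\tau a_n\leq C$. Your route is slightly more self-contained and in fact does not use the $L^\infty$ bound on $\rho^\tau$, so your constant does not actually depend on $M$; this is harmless since the lemma allows that dependence.
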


\begin{proof}
We derive that
\begin{equation}
\begin{split}
&\;\int_{0}^{t_0-\tau} \int_{\Omega} \rho^{\tau}(x,t)\frac{\phi(x,t+\tau)-\phi(x,t)}{\tau}\,dx\,dt\\
=&\;-\int_{\tau}^{t_0-\tau} \int_{\Omega} \frac{\rho^{\tau}(x,t)-\rho^{\tau}(x,t-\tau)}{\tau}\phi(x,t)\,dx\,dt\\
&\;+\frac{1}{\tau}\int_{t_0-\tau}^{t_0}\int_{\Omega} \rho^{\tau}(x,t-\tau)\phi(x,t) \, dx\,dt -\frac{1}{\tau}\int_{0}^{\tau}\int_{\Omega} \rho^{\tau} \phi \,dx\,dt.
\end{split}
\label{eqn: summation by part}
\end{equation}

For the left hand side, by Taylor expansion,
\[
\begin{split}
&\;\left|\int_{0}^{t_0-\tau} \int_{\Omega} \rho^{\tau}(x,t)\frac{\phi(x,t+\tau)-\phi(x,t)}{\tau}\,dx\,dt - \int_{0}^{t_0} \int_{\R^d} \rho^{\tau} \partial_t\phi \,dx\,dt\right|\\
\leq &\;\int_{0}^{t_0-\tau} \rho^{\tau}(\Omega,t)\cdot\frac{\tau}{2}\|\partial_t^2 \phi\|_{L^\infty(\Omega_T)}\,dt
+\int_{t_0-\tau}^{t_0}\rho^\tau(\Omega,t)\|\partial_t \phi\|_{L^\infty(\Omega_T)}\,dt\\
\leq &\;C(T,\rho_0)\tau \|\phi\|_{C^2(\Omega_T)}.
\end{split}
\]

To handle the first term on the right hand side of \eqref{eqn: summation by part},
we use the pushforward formula to derive that
\[
\begin{split}
&\;\int_{\Omega} \frac{\rho^{\tau}(x,t)-\rho^{\tau}(x,t-\tau)}{\tau}\phi(x,t)\,dx
=
\int_{\Omega} \rho^{\tau}(x,t)\frac{\phi(x,t)-\phi\big(x+\tau\nabla p^{\tau}(x,t),t\big)}{\tau}\,dx.
\end{split}
\]
Thanks to the Taylor expansion of $\phi\big(x+\tau\nabla p^{\tau}(x,t),t\big)$ and Lemma \ref{lem: edi},
\[
\begin{split}
&\;\left|\int_{\tau}^{t_0-\tau}\int_{\Omega} \frac{\rho^{\tau}(x,t)-\rho^{\tau}(x,t-\tau)}{\tau}\phi(x,t)\,dx\,dt
+\int_{\tau}^{t_0-\tau}\int_{\Omega} \rho^{\tau}\nabla p^\tau\cdot \nabla \phi\,dx\,dt\right|\\
\leq &\; \|\phi\|_{C^2(\Omega_T)}\int_{0}^{T}\int_{\Omega} \frac{\tau}{2}\rho^{\tau}|\nabla p^\tau|^2\,dx\,dt\\
\leq &\; C(s,\rho_0)\tau \|\phi\|_{C^2(\Omega_T)}.
\end{split}
\]
Besides, by the Cauchy-Schwarz inequality and Lemma \ref{lem: edi},
\[
\left|\left(\int_{0}^{\tau}+\int_{t_0-\tau}^{t_0}\right)\int_{\Omega} \rho^{\tau}\nabla p^\tau\cdot \nabla \phi\,dx\,dt\right|
\leq 
C(s,\rho_0)\tau^{1/2} \|\phi\|_{C^1(\Omega_T)}.
\]

For the last two terms in \eqref{eqn: summation by part}, we derive that
\[
\begin{split}
&\;\left|\frac{1}{\tau}\int_{t_0-\tau}^{t_0}\int_{\Omega} \rho^{\tau}(x,t-\tau)\phi(x,t) \, dx\,dt -\int_{\Omega} \rho^{\tau}(x,t_0)\phi(x,t_0) \, dx\right|\\
\leq &\;\frac{1}{\tau}\int_{t_0-\tau}^{t_0}\rho^{\tau}(\Omega,t-\tau) \|\phi(\cdot,t)-\phi(\cdot,t_0)\|_{C(\Omega)}\,dt\\
&\;+\frac{1}{\tau}\int_{t_0-\tau}^{t_0}\|\rho^{\tau}(\cdot,t-\tau)-\rho^{\tau}(\cdot,t_0)\|_{H^{-1}(\R^d)} \|\phi(\cdot,t_0)\|_{H^1( \R^d)}\,dt.
\end{split}
\]
The first term is bounded by
$\tau \rho_0(\Omega)\|\phi\|_{C^1(\Omega_T)}$.
By the Cauchy-Schwarz inequality and the definition of $\rho^\tau$, the second term is bounded by
\[
\begin{split}
&\;C(|\Omega|)\|\phi\|_{C^1(\Omega_T)}\tau^{-1/2}\left(\int_{t_0-\tau}^{t_0}\|\rho^{\tau}(\cdot,t-\tau)-\rho^{\tau}(\cdot,t_0)\|_{H^{-1}(\R^d)}^2\,dt\right)^{1/2}\\
\leq &\;C(|\Omega|)\|\phi\|_{C^1(\Omega_T)}\Big(\|\rho^{\tau}(\cdot,t_0-2\tau)-\rho^{\tau}(\cdot,t_0-\tau)\|_{H^{-1}(\R^d)}^2
+\|\rho^{\tau}(t_0-\tau,\cdot)-\rho^{\tau}(t_0,\cdot)\|_{H^{-1}(\R^d)}^2\Big)^{1/2}.
\end{split}
\]
By Lemma \ref{lem:rho_time_bound}, this is further bounded by $C(|\Omega|,s,\rho_0,M)\tau^{1/2}\|\phi\|_{C^1(\Omega_T)}$.
Hence,
\[
\left|\frac{1}{\tau}\int_{t_0-\tau}^{t_0}\int_{\Omega} \rho^{\tau}(x,t-\tau)\phi(x,t) \, dx\,dt -\int_{\Omega} \rho^{\tau}(x,t_0)\phi(x,t_0) \, dx\right|\\
\leq C(|\Omega|,s,\rho_0,M)\tau^{1/2}\|\phi\|_{C^1(\Omega_T)}.
\]
Similarly, the last term in \eqref{eqn: summation by part} satisfies the same bound.
Summarizing all the above estimates, we complete the proof.
\end{proof}

We first show Theorem~\ref{thm:pde_existence1}.
\begin{proof}[Proof of Theorem~\ref{thm:pde_existence1}]
\ref{L^1 convergence positive case} is proved in Proposition~\ref{prop:strong_compactness}.

By 
Lemma~\ref{lem:lower_bound}, we have uniform-in-$\tau$ lower bounds for $\rho^\tau$ and $p^\tau$, namely,
$\rho^\tau \geq \rho_\lambda \geq a_\lambda>0$ a.e.\;and $p^\tau\geq \alpha_\lambda$.
Here $a_\lambda$ and $\alpha_\lambda$ are defined in Lemma \ref{lem:lower_bound}.
Then Lemma \ref{lem: edi} implies $\nabla p^\tau$ is uniformly bounded in $L^2(\Omega_T)$.
By Lemma~\ref{bounds}, they are also uniformly bounded from above.
Hence, there exists $p\in L^2([0,T];H^1(\Omega))\cap L^\infty(\Omega_T)$, such that $\nabla p^\tau \rightharpoonup \nabla p$ in $L^2(\Omega_T)$ and $p^\tau \rightharpoonup p$ weak-$*$ in $L^\infty(\Omega_T)$ along a subsequence.
We may additionally assume $p(x,t)\leq M$ in $\Omega_T$ because $p^\tau\leq M$.
The convergence of $p^\tau$ together with Proposition~\ref{prop:strong_compactness} and the uniform boundedness of $\rho^\tau$ in $L^\infty(\Omega_T)$ implies that $\rho^{\tau}\nabla p^{\tau}\rightharpoonup \rho\nabla p$ in $L^2(\Omega_T)$ along a subsequence, and that $(\rho^{\tau})^{1/2}\nabla p^{\tau}\rightharpoonup \rho^{1/2}\nabla p$ in $L^2(\Omega_T)$.
Hence, $\rho |\nabla p|^2\leq \liminf_{\tau \to 0}\rho^\tau|\nabla p^\tau|^2$ a.e.\;in $\Omega_T$.

By Fubini's theorem, for a.e.\;$t_0\in [0,T]$, $\rho^\tau(\cdot,t_0)\to \rho(\cdot,t_0)$ in $L^1(\Omega)$.
So \ref{weak formulation positive case} follows from the aforementioned convergence results and Lemma~\ref{continuity}.

Lastly, to show \ref{relation between p and rho positive case}, we derive by Lemma~\ref{lem:dual_relation} and the dual relation of $(\rho^{\tau}, p^{\tau})$ that, for any non-negative $\phi\in L^\infty(\Omega_T)$,
$$
\int_0^T\int_\Omega\rho^{\tau}p^{\tau}\phi\,dx\,dt=\int_0^T\int_\Omega \big(s(\rho^{\tau}(x,t),x)+s^*(p^{\tau}(x,t),x)\big)\phi(x,t)\,dx\,dt.
$$
Since $\rho^\tau \to \rho$ in $L^1(\Omega_T)$ by Proposition \ref{prop:strong_compactness} and $p^\tau \rightharpoonup p$ weak-$*$ in $L^\infty(\Omega_T)$ along a subsequence, we have
$$
\int_0^T\int_\Omega\rho^{\tau}p^{\tau}\phi\,dx\,dt\to \int_0^T\int_\Omega\rho p\phi\,dx\,dt.
$$
On the other hand, since $\rho^\tau \to \rho$ a.e.\;in $\Omega_T$ up to a further subsequence, we use \ref{assumption: convexity} to derive that
$$
\liminf_{\tau\to 0}s(\rho^{\tau}(x,t),x)\geq s(\rho(x,t),x)\quad \mbox{a.e.\;in }\Omega_T.
$$
By \ref{assumption: effective domain} and Fatou's lemma,
$$
\liminf_{\tau\to 0}\int_0^T\int_\Omega s(\rho^{\tau}(x,t),x)\phi(x,t)\,dx\,dt \geq \int_0^T\int_\Omega s(\rho(x,t),x)\phi(x,t)\,dx\,dt.
$$
Note that Fatou's lemma is applicable here since $s(z,x)$ admits a lower bound thanks to \ref{assumption: effective domain}.
Moreover, since $s^*(\cdot,x)$ is convex and $\phi\geq 0$,
$$
\int_0^T\int_\Omega s^*(p^{\tau},x)\phi\,dx\,dt\geq \int_0^T\int_\Omega \big(s^*(p,x)+\partial_p s^*(p,x)(p^\tau-p)\big)\phi\,dx\,dt.
$$
By the convexity of $s^*(\cdot,x)$ and Lemma \ref{lem:pressure_upper_bound},
\[
0\leq \partial_p s^*(p(x,t),x)\leq \sup_{x\in\Omega}\partial_p s^*(M,x)<+\infty.
\]
Since $p^\tau \rightharpoonup p$ weak-$*$ in $L^\infty(\Omega_T)$, we obtain that
$$
\liminf_{\tau \to 0}\int_0^T\int_\Omega s^*(p^{\tau},x)\phi\,dx\,dt\geq \int_0^T\int_\Omega s^*(p,x)\phi\,dx\,dt.
$$
Combining all the convergence, we obtain that for any non-negative $\phi\in L^\infty(\Omega_T)$,
$$
\int_0^T\int_\Omega\rho p\phi\,dx\,dt\geq\int_0^T\int_\Omega \big(s(\rho,x)+s^*(p,x)\big)\phi(x,t)\,dx\,dt.
$$
Therefore, $\rho p \geq s(\rho,x)+s^*(p,x)$ a.e.\;in $\Omega_T$.
Then \ref{relation between p and rho positive case} follows from the Young's inequality and Lemma \ref{lem:dual_relation}.
\end{proof}

Next we proceed to prove Theorem~\ref{thm:pde_existence2}.
The main idea is to construct a decreasing sequence of strictly positive densities $\{\rho_k\}$ by Theorem \ref{thm:pde_existence1} to approximates $\rho$ from above, and then take the limit $k\to \infty$.

\medskip

\noindent$\circ$ {\it Construction of a monotone approximation}

\medskip

Let $\rho_{0,k}:=\max\{\rho_0,\rho_{\frac{1}{k}}\}$, 
where $\rho_{\frac{1}{k}}$ is defined as in Lemma \ref{lem:lower_bound}.
It is straightfrward to check that $\rho_{0,k}$ (with $k$ sufficiently large) all satisfy \eqref{good_data_1} with a uniform $M$, and $E(\rho_{0,k})$ are uniformly bounded.
Besides, $\rho_{0,k}$ is non-increasing in $k$ with $\rho_{0,k}\to \rho_0$ a.e.\;in $\Omega$.

Let $\rho^{\tau}_k$ and $p^{\tau}_k$ denote the discrete density and pressure pair corresponding to the initial density $\rho_{0,k}$. Then Theorem~\ref{thm:pde_existence1} applies to $(\rho^{\tau}_k, p^{\tau}_k)$. Let $(\rho_k,p_k)$ be a corresponding continuum limit obtained along a subsequence of $\tau\to 0$; here for all $k$ we assume the convergence holds along a common subsequence of $\tau$, which can be extracted by a diagonalization argument.

Since $\rho_{k}^\tau(\cdot,t)\in X$ satisfies \eqref{good_data_1} (and thus \eqref{eq:good_data0}) for all $k$ and $t\geq 0$, Theorem \ref{thm: direct comparison of rho and p} yields that $\rho^{\tau}_k$ and $p^{\tau}_k$ are monotone decreasing with respect to $k$, and Lemma \ref{bounds} implies that $\rho_k^\tau$ are uniformly bounded in $L^\infty(\Omega_T)$.
Hence, the same properties hold for $\rho_k$ and $p_k$ thanks to Theorem \ref{thm:pde_existence1}.
Furthermore, from Lemma \ref{lem: edi} we have
\begin{equation*}
\sup_k \int_0^\infty\int_{\Omega}\rho^{\tau}_k(x,t)|\nabla p^{\tau}_k(x,t)|^2\, dx\,dt <\infty.
\end{equation*}
By Theorem \ref{thm:pde_existence1}\ref{convergence of p tau to p with positive density},
\begin{equation}\label{unif_bd}
\sup_k \int_0^\infty\int_{\Omega}\rho_k(x,t)|\nabla p_k(x,t)|^2\, dx\,dt <\infty.
\end{equation}

By virtue of Proposition \ref{prop:strong_compactness}, let $\rho\in L^\infty(\Omega_T)$ be the $L^1(\Omega_T)$-limit of $\rho^\tau$ along a further subsequence.
Since Theorem \ref{thm:l1_contraction} gives that, for all $t\geq 0$,
$$
\norm{(\rho^{\tau}-\rho^{\tau}_k)(\cdot,t)}_{L^1(\Omega)}\leq \frac{1}{k},
$$
using the strong $L^1$-convergence of $\rho^{\tau}$ and $\rho^{\tau}_k$ as $\tau \to 0$, we conclude that
$$
\norm{\rho-\rho_k}_{L^1(\Omega_T)}\leq\frac{T}{k},
$$
which implies $\rho_k\to \rho$ a.e.\;in $\Omega_T$.
Lastly, we let $p$ in $\Omega_T$ be the pointwise limit of the decreasing sequence $p_k$. Since $p_k$ are uniformly bounded from above thanks to Theorem \ref{thm:pde_existence1}, $p_+\in L^\infty(\Omega_T)$.

\medskip

When proving Theorem \ref{thm:pde_existence2}, we shall denote $m(x,t)$ to be a weak limit of $\rho_k\nabla p_k$ as $k\to \infty$.
In order to justify the characterization of $m$ in Theorem \ref{thm:pde_existence2}\ref{relation between m and grad p}, we need to further study the convergence of $\rho_k\nabla p_k$.

\begin{lemma}\label{convergence:further}
Let $\rho_k, p_k$ and $\rho$ be as above.  Then $\rho_k \nabla p_k$  is uniformly bounded in $L^2(\Omega_T)$ and converges weakly along a subsequence to a vector field $m\in L^2(\Omega)$ as $k\to\infty$.
Moreover, $m$ is absolutely continuous with respect to $\rho$, and satisfies \eqref{int_parts}.
\end{lemma}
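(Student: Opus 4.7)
My plan has three steps: first I will extract a weak $L^2$-limit of $\rho_k \nabla p_k$, then establish absolute continuity of this limit with respect to $\rho$, and finally derive the integration-by-parts formula by passing to the limit in the identity satisfied at each level $k$.

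\emph{Step 1 (weak compactness).} I will bound $\rho_k \nabla p_k$ uniformly in $L^2$ as follows. Lemma~\ref{bounds} gives $\|\rho_k\|_{L^\infty(\Omega_T)} \leq C(s,M)$ uniformly in $k$; combined with \eqref{unif_bd} this yields
$$
\|\rho_k \nabla p_k\|_{L^2(\Omega_T)}^2 \leq \|\rho_k\|_{L^\infty(\Omega_T)} \int_0^T \int_\Omega \rho_k |\nabla p_k|^2 \, dx\, dt \leq C,
$$
so along a subsequence $m_k := \rho_k \nabla p_k \rightharpoonup m$ weakly in $L^2(\Omega_T;\RR^d)$.

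\emph{Step 2 (absolute continuity with respect to $\rho$).} Set $A := \{(x,t)\in\Omega_T : \rho(x,t) = 0\}$. The monotone convergence $\rho_k \searrow \rho$ combined with the $L^\infty$-bound on $\rho_k$ gives $\int_A \rho_k \to 0$ by dominated convergence, and Cauchy--Schwarz then yields
$$
\int_A |m_k| \, dx\, dt \leq \Big(\int_A \rho_k\Big)^{1/2}\Big(\int_A \rho_k |\nabla p_k|^2\Big)^{1/2} \longrightarrow 0.
$$
Since $g \chi_A \in L^2(\Omega_T;\RR^d)$ for any $g \in L^\infty(\Omega_T;\RR^d)$, testing the weak limit against $g\chi_A$ together with this estimate forces $\int_A m \cdot g = 0$, so $m = 0$ a.e.\;on $A$, i.e., $m$ is absolutely continuous with respect to $\rho$.

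\emph{Step 3 (integration by parts).} For $f \in \mathcal{T}$, Lemma~\ref{lem:lower_bound} gives $\rho_k \geq \rho_{1/k} > 0$ a.e., so $\nabla p_k = m_k/\rho_k$ a.e.; since $p_k \in L^2([0,T];H^1(\Omega))$ and $f \cdot n = 0$ on $\partial\Omega$, the standard integration by parts yields
$$
\int_0^T \int_\Omega m_k \cdot \frac{f}{\rho_k} \, dx\, dt = -\int_0^T \int_\Omega p_k \, \nabla \cdot f \, dx\, dt.
$$
On the left, the assumption $\rho^{-1} f \in L^2(\Omega_T)$ forces $f = 0$ a.e.\;on $A$, and on $\{\rho > 0\}$ the monotonicity $\rho_k \geq \rho$ gives $|f/\rho_k| \leq |f|/\rho \in L^2$; dominated convergence then yields $f/\rho_k \to f/\rho$ strongly in $L^2$, and the weak-strong pairing with $m_k \rightharpoonup m$ produces $\int (m/\rho)\cdot f$ in the limit. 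On the right, Lemma~\ref{lem:pressure_upper_bound} gives $\|p_k \rho_k\|_{L^\infty} \leq C$ uniformly, while $\rho^{-1} \nabla\cdot f \in L^1$ forces $\nabla\cdot f = 0$ a.e.\;on $A$, so on $\{\rho > 0\}$
$$
|p_k \, \nabla\cdot f| = |p_k \rho_k| \cdot \rho_k^{-1}|\nabla\cdot f| \leq C \rho^{-1}|\nabla\cdot f| \in L^1(\Omega_T),
$$
with $p_k \to p$ pointwise there (and $p > -\infty$ on $\{\rho > 0\}$ since $\rho = \partial_p s^*(p,\cdot) > 0$). Dominated convergence then delivers the right-hand side limit $-\int p \, \nabla\cdot f$, which proves \eqref{int_parts}.

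The main obstacle is the final step: the possible degeneracy of $\rho$ prevents a naive weak-weak passage to the limit. The weighted test-function class $\mathcal{T}$ is tailored precisely so that its two integrability conditions, combined with the monotonicity $\rho_k \geq \rho$, turn each side of the discrete identity into a controllable weak-strong pairing on the left and a dominated-convergence argument on the right.
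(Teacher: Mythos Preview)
Your proof is correct and follows the same three-step architecture as the paper's, but with two execution differences worth noting. For absolute continuity, the paper argues via lower semicontinuity of the Benamou--Brenier functional $\int |m|^2/\rho$ (phrased somewhat loosely through Fatou), whereas you show directly that $\int_A |m_k|\to 0$ and test against $g\chi_A$; your route is more elementary and sidesteps the joint-convexity machinery, though it yields only qualitative absolute continuity rather than the quantitative bound $\int |m|^2/\rho<\infty$. For the integration-by-parts step, the paper first restricts to $f$ with $\rho^{-1}f\in L^\infty$, proves \eqref{int_parts} there via the error estimate $\int(\rho_k/\rho-1)\nabla p_k\cdot f\to 0$, and then extends to all of $\mathcal{T}$ by density; you instead establish strong $L^2$-convergence of $f/\rho_k\to f/\rho$ from the monotonicity $\rho_k\geq\rho$ and dominated convergence, then use weak--strong pairing, which handles all $f\in\mathcal{T}$ in one stroke. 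The paper also pauses to exhibit explicit elements of $\mathcal{T}$ (via cutoffs $\eta_\delta(p_k)F$), which you omit since the lemma does not require it.

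One minor point: your parenthetical justification ``$p>-\infty$ on $\{\rho>0\}$ since $\rho=\partial_p s^*(p,\cdot)>0$'' invokes the duality relation from Theorem~\ref{thm:pde_existence2}\ref{relation between p and rho}, which in the paper's logical order is proved \emph{after} this lemma. The claim is nonetheless immediate from the already-available facts $\rho_k=\partial_p s^*(p_k,\cdot)$, $p_k\searrow p$, $\rho_k\to\rho$, and Lemma~\ref{lem:s_star_increasing}, so this is a presentational issue rather than a gap.
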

\begin{proof}

Let $m_k=\rho_k\nabla p_k$. Since $\rho_k$ is uniformly bounded in $L^{\infty}(\Omega_T)$, it follows from \eqref{unif_bd} that $m_k$ is uniformly bounded in $L^2(\Omega_T)$.
So there exists $m\in L^2(\Omega_T)$ and a subsequence of $\{m_k\}$, which is still denoted by $m_k$ with abuse of notations, such that $m_{k}\rightharpoonup m$ weakly in $L^2(\Omega_T)$.
So $|m|^2\leq \liminf_{k\to\infty}|m_k|^2$ a.e.\;in $\Omega_T$.
Then by the fact $\rho_k\to \rho$ a.e.\;in $\Omega_T$ and the Fatou's lemma, 
\[
\int_{\Omega_T} \frac{|m|^2}{\rho}\leq \liminf_{k\to\infty} \int_{\Omega_T} \frac{|m_k|^2}{\rho_k}<\infty.
\]
Hence, by the Cauchy-Schwarz inequality, $m$ is absolutely continuous with respect to $\rho$ in $\Omega_T$.

Let $f \in L^2([0,T];H^1(\Omega))$ be a vector field with zero normal component on $\partial\Omega\times [0,T]$, such that $f\rho^{-1}\in L^{\infty}(\Omega_T)$ and $\rho^{-1}\nabla\cdot f\in L^1(\Omega_T)$.
Indeed, such $f$ exists.
For instance, let $\eta_{\delta}:\RR\to\RR$ be a smooth function such that
\[
\eta_{\delta}(q)=
\begin{cases}
1 &\textrm{if}\quad \essinf_{x\in\Omega} \partial_p s^*(q,x)\geq \delta,\\
0 &\textrm{if}\quad \essinf_{x\in\Omega} \partial_p s^*(q,x)\leq \delta/2.\\
\end{cases}
\]
We claim that $f(x,t):=\eta_\delta (p)F(x,t)$ satisfies the desired properties, where $F(x,t)$ is an arbitrary smooth vector field in $\Omega_T$ with zero normal component on $\partial\Omega\times [0,T]$.
By definition, $\eta_\delta(p_k)\to \eta_\delta(p)$ pointwise in $\Omega_T$ and $\eta_\delta(p)\in L^\infty(\Omega_T)$.
Next, by the relation $\rho_k(x,t)=\partial_p s^*(p_k(x,t),x)$ a.e., we see that  $\{ \rho_k(x,t)\leq \frac{\delta}{2}\} \subset \{\eta_{\delta}(p_k)= \eta'_{\delta}(p_k) \equiv 0\}$ up to a measure zero set in $\Omega_T$, so $|\rho_k^{-1}\eta_\delta(p_k)|\leq 2\delta^{-1}$ a.e..
This implies $\rho^{-1}\eta_\delta(p)\leq 2\delta^{-1}$ a.e.\;in $\Omega_T$ and thus $f\rho^{-1}\in L^\infty(\Omega_T)$.
Moreover,
\[
\int_{\Omega_T} |\nabla \big(\eta_{\delta}(p_k)\big)|^2\leq \frac{2}{\delta}\norm{\eta_{\delta}'}_{L^{\infty}(\RR)}\int_{\Omega_T} \rho_k|\nabla p_k|^2,
\]
which is uniformly bounded.
Hence, combined with $\eta_\delta(p_k)\to \eta_\delta(p)$, we know that $\nabla (\eta_\delta(p_k))\rightharpoonup \nabla (\eta_\delta(p))$ in $L^2(\Omega_T)$.
It is then clear $f\in L^2([0,T];H^1(\Omega))$.
Since
\[
\int_{\Omega_T} \rho_k^{-2} |\nabla (\eta_{\delta}(p_k) )|^2\leq \left(\frac{2}{\delta}\right)^{3}\norm{\eta_{\delta}'}_{L^{\infty}(\RR)}\int_{\Omega_T} \rho_k|\nabla p_k|^2,
\]
we apply Fatou's lemma to find that $\rho^{-1}\nabla(\eta_\delta(p))\in L^2(\Omega_T)\subset L^1(\Omega_T)$.
Therefore, we can conclude that $\rho^{-1}\nabla \cdot f\in L^1(\Omega_T)$.

We can then estimate 
\[
\begin{split}
\lim_{k\to\infty}\Big|\int_{\Omega_T}\Big( \frac{\rho_k}{\rho} -1\Big) \nabla p_k\cdot f\Big|\leq&\;
\lim_{k\to\infty} \norm{f\rho^{-1}}_{L^{\infty}(\Omega_T)}\norm{(\rho_k-\rho)^{1/2}\nabla p_k}_{L^2(\Omega_T)}\norm{\rho_k-\rho}_{L^{1}(\Omega_T)}^{1/2}\\
\leq&\;\norm{f\rho^{-1}}_{L^{\infty}(\Omega_T)}\lim_{k\to\infty} \norm{\rho_k^{1/2}\nabla p_k}_{L^{2}(\Omega_T)}\norm{\rho_k-\rho}_{L^{1}(\Omega_T)}^{1/2}=0.
\end{split}
\]
Thus,
\[
\int_{\Omega_T} \frac{m}{\rho}\cdot f =\lim_{k\to\infty}\int_{\Omega_T} \frac{\rho_{k}\nabla p_{k}}{\rho}\cdot f=\lim_{k\to\infty}\int_{\Omega_T} \nabla p_{k}\cdot f.
\]
Recall that by Theorem \ref{thm:pde_existence1}, $p_k\in L^2([0,T];H^1(\Omega))$.
It is then legitimate to integrate by parts to obtain
\[
\int_{\Omega_T} \frac{m}{\rho}\cdot f =-\lim_{k\to\infty}\int_{\Omega_T} p_{k}\nabla\cdot f=-\lim_{k\to\infty}\int_{\Omega_T} \rho p_{k}\cdot \rho^{-1}\nabla\cdot f.
\]
By Lemma \ref{bounds}, $\rho p\in L^{\infty}(\Omega_T)$ and $\rho_k p_k\in L^{\infty}(\Omega_T)$ where the bound only depends on $s$ and $M$ in \eqref{good_data_1}.  Since $\rho\leq \rho_k$, it also follows that $\rho p_k$ are uniformly bounded in $L^{\infty}(\Omega_T)$ and $\rho p_k \to \rho p$ pointwise.
By the dominated convergence theorem,
\begin{equation}\label{eq:distribution_result}
\int_{\Omega_T} \frac{m}{\rho}\cdot f=-\int_{\Omega_T} p \nabla \cdot f.
\end{equation}
Note that the left hand side is well defined as long as $\rho^{-1}f\in L^2(\Omega_T)$, while the right hand side is well defined as long as $\rho^{-1}\nabla \cdot f\in L^1(\Omega_T)$. Thus, by a limiting argument, we see that \eqref{eq:distribution_result} holds for all $f\in \mathcal{T}$.
\end{proof}

\begin{proof}[Proof of Theorem~\ref{thm:pde_existence2}]
From Theorem~\ref{thm:pde_existence1} and Lemma~\ref{convergence:further}, we can send $k\to\infty$ in \eqref{weak} applied to $(\rho_k, p_k)$ to obtain \eqref{weak_eq},
where $m \in L^2(\Omega_T)$ satisfies \eqref{int_parts}.

It remains to show \ref{relation between p and rho} and
\begin{equation}\label{claim000}
m = \nabla s^*(p(x,t),x) - \partial_x s^*(p(x,t),x).
\end{equation}
Indeed,
applying Theorem~\ref{thm:pde_existence1}\ref{relation between p and rho positive case} to $(\rho_k,p_k)$ yields
\begin{equation}\label{dual_k_2}
 (\rho_k\nabla p_k)(x,t) = \nabla s^*(p_k(x,t),x)- \partial_x s^*(p_k(x,t),x),
\end{equation}
which is uniformly bounded in $L^2(\Omega_T)$.
We claim that $s^*(p_k(x,t),x)$ is uniformly bounded in $L^2([0,T];H^1(\Omega))$.
Indeed, on one hand, the $L^2$-bound of $\nabla s^*(p_k(x,t),x)$ is from \eqref{dual_k_2} and the assumption \ref{assumption: regularity of s*}.
On the other hand, $s^*(p_k(x,t),x)$ is uniformly bounded from above because of the fact $p_k\leq M$ (Theorem \ref{thm:pde_existence1}) and Lemma \ref{lem:pressure_upper_bound}.
By Lemma \ref{lem:s_star_increasing}, $s^*(p_k(x,t),x)$ is also non-negtive. 
This justifies the claim.
Therefore, up to a subsequence, $s^*(p_k(x,t),x)$ converges weakly to some $w$ in $L^2([0,T];H^1(\Omega))$.  Since $p_k\to p$ pointwise, it follows that $w = s^*(p(x,t),x)$, and $\nabla s^*(p_k(x,t),x)\rightharpoonup\nabla s^*(p(x),x)$ in $L^2(\Omega_T)$. Lastly, due to the pointwise convergence of $p_k\to p$ and \ref{assumption: regularity of s*}, we can conclude that $\partial_x s^*(p_k(x,t),x)$ weakly converges to $\partial_x s^*(p(x,t),x)$ in $L^2(\Omega_T)$.  Now sending $k\to\infty$ in \eqref{dual_k_2} yields \eqref{claim000}.

Lastly, \ref{relation between p and rho}  follows from  sending $k\to\infty$ in the dual realtion
$$
(\rho_kp_k)(x,t) = s(\rho_k(x,t),x) + s^*(p_k(x,t),x)\quad a.e.\;\mbox{ in }\Omega_T,
$$
using a.e.\;convergence in each term and Lemma~\ref{lem:dual_relation}.
\end{proof}

\section{Uniqueness of Weak Solutions}
\label{sec: uniqueness}
In this section we discuss uniqueness of the weak solutions constructed in the previous section. Our proof largely follows that of the $x$-independent case (see \cite{Carrillo,vazquez}).

\begin{definition}\label{weak_sol}
$(\rho, p)$ is a weak solution of $(P)$ if for any $T>0$, $\rho\in L^{\infty}\big(\Omega_T\big)$, $p$ is measurable with $p_+ \in L^{\infty}(\Omega_T)$, 
and they satisfy \ref{relation between p and rho} and \ref{weighted_pressure} in Theorem~\ref{thm:pde_existence2}.
\end{definition}

In Theorem~\ref{thm:pde_existence2}, we have shown the existence of weak solutions under the assumption that $\partial_x s^*(\cdot, x)$ is continuous and satisfies \ref{assumption: regularity of s*}.
To discuss the uniqueness of the weak solution, we need a stronger assumption on the continuity of  $\partial_x s^*$: there exists a constant $C>0$ such that for all $x\in \Omega$,
\begin{equation}\label{assumption_s}
|\partial_x s^*(p_1, x) - \partial_x s^*(p_2, x) |^2 \leq  C | \partial_p s^*(p_1, x) - \partial_p s^*(p_2,x)| |s^*(p_1,x) - s^*(p_2,x)|.
\end{equation}

\begin{remark}
The notion of weak solution in Definition \ref{weak_sol} is the same as the one given in \cite{Carrillo} for the case $s^*=s^*(p)$, where \eqref{assumption_s} is automatically satisfied.

\end{remark}

\begin{theorem}[Uniqueness]\label{thm:uniqueness}
Under the assumption \eqref{assumption_s} in addition to \ref{assumption: convexity}-\ref{assumption: inf sup ratio of partial s*}, there exists at most one weak solution of $(P)$, in the sense that if $(\rho_1,p_1)$ and $(\rho_2, p_2)$ are two pairs of weak solutions for $(P)$, then  we have that  $\rho_1=\rho_2$ a.e.\;in $\Omega_T$ and $m_1=m_2$ a.e.\;in $\Omega_T$. Here $m_i$ are defined in Theorem~\ref{thm:pde_existence2}\ref{weighted_pressure} with $p_i$.
\end{theorem}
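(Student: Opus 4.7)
The strategy is an H\"{o}lmgren--Oleinik duality argument in the spirit of \cite{Carrillo,vazquez}, adapted to the spatially inhomogeneous setting via the hypothesis \eqref{assumption_s}. Let $(\rho_1, p_1)$ and $(\rho_2, p_2)$ be two weak solutions sharing $\rho_0$, and introduce
\[
u := \rho_1 - \rho_2,\quad Z := s^*(p_1,\cdot) - s^*(p_2,\cdot),\quad V := \partial_x s^*(p_1,\cdot) - \partial_x s^*(p_2,\cdot),
\]
so $m_1 - m_2 = \nabla Z - V$. Since $Z \in L^2([0,T]; H^1(\Omega))$ from the proof of Theorem \ref{thm:pde_existence2}, I subtract the two weak formulations in Theorem \ref{thm:pde_existence2}\ref{weighted_pressure} and integrate $\nabla Z \cdot \nabla \psi$ by parts against a test function $\psi$ with $\psi(\cdot,T) = 0$ and $\partial_n \psi = 0$ on $\partial\Omega$ to obtain the weak difference identity
\[
\int_0^T \int_\Omega u\,\partial_t\psi + Z\,\Delta\psi + V\cdot\nabla\psi \, dx\, dt = 0.
\]

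Convexity of $s^*(\cdot,x)$ gives $uZ \geq 0$ a.e., and \eqref{assumption_s} supplies the subcritical bound $|V|^2 \leq C u Z$, which forces $V = 0$ on $\{u = 0\}$; the strict monotonicity of $\partial_p s^*(\cdot,x)$ on its positivity set (implicit in \ref{assumption: regularity of s*}) likewise forces $Z = 0$ on $\{u = 0\}$. Setting $a := (Z/u)\mathbf{1}_{\{u\neq 0\}}$ and $b := (V/u)\mathbf{1}_{\{u\neq 0\}}$, one has $(au, bu) = (Z, V)$ a.e., $a \geq 0$, and $|b|^2 \leq Ca$. For each $\phi \in C_c^\infty(\Omega\times(0,T))$, I regularize to smooth $a_\varepsilon \geq \varepsilon$ and $b_\varepsilon$ preserving $|b_\varepsilon|^2 \leq C a_\varepsilon$, and solve the backward linear parabolic dual problem
\[
\partial_t\psi_\varepsilon + a_\varepsilon \Delta \psi_\varepsilon + b_\varepsilon \cdot \nabla\psi_\varepsilon = -\phi,\quad \psi_\varepsilon(\cdot,T) = 0,\quad \partial_n \psi_\varepsilon|_{\partial\Omega} = 0,
\]
which is uniformly parabolic and admits a smooth solution with uniform-in-$\varepsilon$ energy bounds on $\|\sqrt{a_\varepsilon}\Delta\psi_\varepsilon\|_{L^2}$ and $\|\nabla\psi_\varepsilon\|_{L^2}$ (the drift being absorbed via $|b_\varepsilon|^2\leq Ca_\varepsilon$). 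Inserting $\psi_\varepsilon$ into the difference identity and using $(au,bu) = (Z,V)$ yields
\[
\int u\phi\, dx\, dt = \int u(a-a_\varepsilon)\,\Delta\psi_\varepsilon + u(b - b_\varepsilon)\cdot\nabla\psi_\varepsilon\, dx\, dt;
\]
a Cauchy--Schwarz estimate, combined with a careful choice of regularization (e.g.\ an additive floor $a_\varepsilon = a + \varepsilon$ so that $|u(a-a_\varepsilon)|/\sqrt{a_\varepsilon}\lesssim \sqrt{\varepsilon}\,|u|$), shows the right side tends to $0$. Hence $\int u\phi = 0$ for all $\phi$, so $\rho_1 = \rho_2$ a.e.

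In the multiplicative case $s^*(p,x) = f(x)w(p)$, the bound \eqref{assumption_s} may fail, but the identity $m_1 - m_2 = f\,\nabla W$ with $W := w(p_1) - w(p_2)$ reduces the difference equation to the pure divergence form $\partial_t u = \nabla\cdot(f\nabla W)$, and the same duality scheme applies with $a = fW/u$ and no drift. Once $\rho_1 = \rho_2 = \rho$ a.e., the relation $\rho_i = \partial_p s^*(p_i,\cdot)$ together with strict monotonicity of $\partial_p s^*(\cdot, x)$ on $\{\rho > 0\}$ gives $p_1 = p_2$ there, hence $m_1 = m_2$ on $\{\rho > 0\}$; on $\{\rho = 0\}$ both fluxes vanish by Theorem \ref{thm:pde_existence2}\ref{relation between m and grad p}. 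The continuum $L^1$-contraction then follows by passing $\tau \to 0$ in the discrete contraction of Theorem \ref{thm:l1_contraction}, using the $L^1$-convergence of $\rho^\tau$ along the full sequence, which is guaranteed precisely by the uniqueness just established.

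I expect the main obstacle to be the construction of the regularization $(a_\varepsilon, b_\varepsilon)$: it must simultaneously preserve the subcritical structure $|b_\varepsilon|^2 \leq Ca_\varepsilon$, yield uniform-in-$\varepsilon$ parabolic estimates on $\psi_\varepsilon$ strong enough to justify its use as a test function against the merely $L^2$/$H^1$-regular $(u,Z,V)$, and converge to $(a,b)$ quickly enough that the regularization errors $u(a-a_\varepsilon)\Delta\psi_\varepsilon$ and $u(b-b_\varepsilon)\cdot\nabla\psi_\varepsilon$ vanish in $L^1(\Omega_T)$. A secondary technical point is that the multiplicative case $s^*(p,x) = f(x)w(p)$, which sidesteps \eqref{assumption_s}, requires its own ad hoc treatment based on the divergence-form reduction above.
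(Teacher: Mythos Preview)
Your Holmgren--Oleinik scheme is a different route from the paper's. The paper never solves a backward dual PDE: it tests the subtracted weak formulation at each fixed $t$ against $\phi=Z:=s^*(p_1,\cdot)-s^*(p_2,\cdot)\in H^1(\Omega)$, introduces the time--integrated flux $h(x,t):=\int_0^t(m_1-m_2)\,ds$, and rewrites the identity as $\int_\Omega uZ\,dx=-\int_\Omega h\cdot(\partial_t h+V)\,dx$. One Cauchy--Schwarz together with \eqref{assumption_s} (which gives $|V|^2\le C\,uZ$) then yields a Gronwall inequality for $\int_\Omega|h|^2$; hence $h\equiv 0$, so $m_1=m_2$ first, and $\rho_1=\rho_2$ follows from the weak formulation. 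No regularization, no auxiliary parabolic problem.

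Your proposal has a genuine gap. The assertion that $Z=0$ on $\{u=0\}$, attributed to ``strict monotonicity of $\partial_p s^*(\cdot,x)$ implicit in \ref{assumption: regularity of s*}'', is incorrect: \ref{assumption: regularity of s*} makes $s^*(\cdot,x)$ $C^1$ (equivalently $s(\cdot,x)$ strictly convex), but does \emph{not} force $\partial_p s^*(\cdot,x)$ to be strictly increasing. If $\partial s(\rho,x)$ is a nondegenerate interval at some $\rho>0$, both $p_1,p_2\in\partial s(\rho,x)$ may differ while $\rho_1=\rho_2=\rho$, and then $Z=s^*(p_1,x)-s^*(p_2,x)=\rho(p_1-p_2)\neq 0$. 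Consequently the identification $(au,bu)=(Z,V)$ fails on a set of possibly positive measure, and your error term acquires an extra piece $\int_{\{u=0\}}Z\,\Delta\psi_\varepsilon$; with the floor $a_\varepsilon=a+\varepsilon$ (so $a_\varepsilon=\varepsilon$ there) the only available bound $\|\sqrt{a_\varepsilon}\,\Delta\psi_\varepsilon\|_{L^2}\le C$ gives at best $\varepsilon^{-1/2}\|Z\|_{L^2(\{u=0\})}$, which blows up. The same misreading of \ref{assumption: regularity of s*} undermines your deduction of $m_1=m_2$ from $\rho_1=\rho_2$ (and note that property \ref{relation between m and grad p} of Theorem \ref{thm:pde_existence2} is not part of Definition \ref{weak_sol}, so you cannot invoke it for an arbitrary weak solution). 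The paper's argument is immune to all this because the sign condition $uZ\ge 0$ holds pointwise regardless of whether $Z$ vanishes on $\{u=0\}$, and because it obtains $m_1=m_2$ \emph{before} $\rho_1=\rho_2$.
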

\begin{remark}
\begin{enumerate}
\item When $s^*$ has the special form $s^*(p,x)=f(x)w(p)$ with positive $f$, then the assumption (\ref{assumption_s}) is not needed.  Indeed, if one changes the choice of test function in the proof to $\phi(x)=w(p_1(x))-w(p_2(x))$, the ``bad'' term  $\partial_x s^*(p_1, x) - \partial_x s^*(p_2, x)$ does not appear and the remaining terms have the correct sign.
\item Note that uniqueness of weak solutions implies that the convergence results in Theorems \ref{thm:pde_existence1} and \ref{thm:pde_existence2} must hold along the full sequence $\rho^{\tau}$.  As a result, one can use the discrete scheme to conclude that solutions to the continuum PDE satisfy the $L^1$-contraction principle.
\end{enumerate}
\end{remark}

\begin{proof}
Let $(\rho_i,p_i)$, $i=1,2$ as given above. Then by a density argument, for any $\phi \in H^1(\Omega)$ and for a.e.\;$t>0$ we have the weak formulation
$$
\int_0^{t} \int_{\Omega} m_i \nabla \phi\, dx\,ds = \int_{\Omega} (\rho_0(x) - \rho_i(x,t) )\phi(x) \,dx,
$$
where $m_i (x,t) := \nabla s^*(p_i(x,t), x) - \partial_x s^*(p_i(x,t),x)$. Taking the difference of the respective weak formulations of $i = 1$ and $i = 2$, we have
\begin{equation}\label{above}
\int_{\Omega} (\rho_1(x,t) - \rho_2(x,t))\phi(x)\,  dx=  - \int_0^{t} \int_{\Omega} (m_1 - m_2)\cdot\nabla \phi \,dx\, ds.
\end{equation}

Take $\phi(x):= s^*(p_1(x,t), x) - s^*(p_2(x,t),x)$, which is in $H^1(\Omega)$ for a.e.\;$t>0$ (see the proof of Theorem \ref{thm:pde_existence2}).
Then
\begin{equation*}
\nabla\phi = (m_1- m_2)(x,t) + \partial_x s^*(p_1(x,t),x) - \partial_x s^*(p_2(x,t),x).
\end{equation*}
Define $h(x,t):= \int_0^{t}(m_1-m_2)(x,s)\,ds$, and we may rewrite \eqref{above} as
$$
\int_{\Omega} (\rho_1-\rho_2)(x,t) \phi(x)\, dx = - \int_{\Omega} h(x,t) \big(\partial_t h(x,t) + \partial_x s^*(p_1(x,t),x) - \partial_x s^*(p_2(x,t),x)\big) \,dx.
$$
By Cauchy-Schwarz,
$$
\int_{\Omega} (\rho_1-\rho_2)(x,t) \phi(x) \,dx + \int_{\Omega} h(x,t) \partial_t h(x,t)  dx \leq \frac{1}{2\delta} \int_{\Omega} h^2(x,t) dx + \frac{\delta}{2} \int_{\Omega} g^2(x,t) \,dx.
$$
where $g(x,t):= \partial_x s^*(p_1(x,t),x) - \partial_x s^*(p_2(x,t),x)$ and $\delta >0$ is to be determined. Integrating in time, we obtain
$$
\int_0^T\int_{\Omega} (\rho_1-\rho_2)(x,t) \phi(x)\, dx + \frac{1}{2}\int_{\Omega} h^2(x,T) \, dx \leq \frac{1}{2\delta} \int_0^T\int_{\Omega} h^2(x,t) \,dx\,dt + \frac{\delta}{2} \int_0^T \int_{\Omega}  g^2(x,t)\, dx\, dt.
$$

From the dual relation $\rho_i (x,t) = \partial_p s^*(p_i(x,t),x)$ and the fact that $s^*$ is monotone increasing and convex in $p$ (Lemma~\ref{lem:s_star_increasing}), we have
$$
(\rho_1-\rho_2)(x,t) \phi(x) = |\rho_1(x,t)-\rho_2(x,t)||s^*(p_1(x,t),x) - s^*(p_2(x,t),x)|.
$$
Also, \eqref{assumption_s} and the dual relation imply that for some $C_*>0$,
$$
g^2(x,t) \leq C_*| \rho_1(x,t) - \rho_2(x,t)| |s^*(p_1,x) - s^*(p_2,x)|.
$$
Hence by choosing $\delta = C_*^{-1}$, we have
$$
\int_0^T\int_{\Omega} |(\rho_1-\rho_2)(x,t)| |s^*(p_1,x) - s^*(p_2,x)| \,dx\,dt + \int_{\Omega} h^2(x,T)\,dx \leq C_* \int_0^T\int_{\Omega} h^2(x,t)\, dx\,dt.
$$
By Gronwall's inequality, for all $T>0$,
\[
\int_0^T\int_{\Omega} h^2(x,t)\, dx\,dt = 0.
\]
This implies $h(x,t) = 0$ a.e.\;in $\Omega_T$ and hence $m_1=m_2$ almost everywhere.  Now it follows from equation (\ref{above}) that $\rho_1=\rho_2$ almost everywhere.
\end{proof}

\bibliographystyle{amsalpha}

\providecommand{\bysame}{\leavevmode\hbox to3em{\hrulefill}\thinspace}
\providecommand{\MR}{\relax\ifhmode\unskip\space\fi MR }
\providecommand{\MRhref}[2]{%
  \href{http://www.ams.org/mathscinet-getitem?mr=#1}{#2}
}
\providecommand{\href}[2]{#2}
\begin{thebibliography}{}

\end{thebibliography}


\begin{thebibliography}{DPMSV16}

\bibitem[AGS08]{AmbGigSav}
Luigi Ambrosio, Nicola Gigli, and Giuseppe Savar{\'e}, \emph{Gradient flows: in
  metric spaces and in the space of probability measures}, Springer Science \&
  Business Media, 2008.

\bibitem[AKY14]{AKY}
Damon Alexander, Inwon Kim, and Yao Yao, \emph{Quasi-static evolution and
  congested crowd transport}, Nonlinearity \textbf{27} (2014), no.~4, 823.

\bibitem[Bre91]{brenier_polar}
Yann Brenier, \emph{Polar factorization and monotone rearrangement of
  vector-valued functions}, Communications on Pure and Applied Mathematics
  \textbf{44} (1991), no.~4, 375--417.

\bibitem[Bre10]{brezis2010functional}
Haim Brezis, \emph{Functional analysis, {Sobolev} spaces and partial
  differential equations}, Springer Science \& Business Media, 2010.

\bibitem[Car99]{Carrillo}
Jos{\'e} Carrillo, \emph{Entropy solutions for nonlinear degenerate problems},
  Archive for Rational Mechanics and Analysis \textbf{147} (1999), no.~4,
  269--361.

\bibitem[DFM14]{DiFM}
Marco Di~Francesco and Daniel Matthes, \emph{Curves of steepest descent are
  entropy solutions for a class of degenerate convection--diffusion equations},
  Calculus of Variations and Partial Differential Equations \textbf{50} (2014),
  no.~1-2, 199--230.

\bibitem[DPMSV16]{bv_ot}
Guido De~Philippis, Alp{\'a}r~Rich{\'a}rd M{\'e}sz{\'a}ros, Filippo
  Santambrogio, and Bozhidar Velichkov, \emph{Bv estimates in optimal
  transportation and applications}, Archive for Rational Mechanics and Analysis
  \textbf{219} (2016), no.~2, 829--860.

\bibitem[Eva90]{Eva90}
Lawrence~C Evans, \emph{Weak convergence methods for nonlinear partial
  differential equations}, no.~74, American Mathematical Soc., 1990.

\bibitem[Gan94]{gangbo_polar}
Wilfrid Gangbo, \emph{An elementary proof of the polar factorization of
  vector-valued functions}, Archive for Rational Mechanics and Analysis
  \textbf{128} (1994), no.~4, 381--399.

\bibitem[Gan95]{gangbo_habilitation}
\bysame, \emph{Habilitation thesis}, Universite de Metz (1995).

\bibitem[GM96]{gangbo_mccann}
Wilfrid Gangbo and Robert~J McCann, \emph{The geometry of optimal
  transportation}, Acta Mathematica \textbf{177} (1996), no.~2, 113--161.

\bibitem[JKO98]{jko}
Richard Jordan, David Kinderlehrer, and Felix Otto, \emph{The variational
  formulation of the {Fokker--Planck} equation}, SIAM Journal on Mathematical
  Analysis \textbf{29} (1998), no.~1, 1--17.

\bibitem[KM19]{alpar_dohyun}
Dohyun Kwon and Alp{\'a}r~Rich{\'a}rd M{\'e}sz{\'a}ros, \emph{Degenerate
  nonlinear parabolic equations with discontinuous diffusion coefficients},
  arXiv preprint arXiv:1907.11319 (2019).

\bibitem[Mey66]{meyer_book}
Paul Meyer, \emph{Probability and potentials}, Blaisdell Pub. Co., 1966.

\bibitem[MMS09]{matthes2009}
Daniel Matthes, Robert~J McCann, and Giuseppe Savar{\'e}, \emph{A family of
  nonlinear fourth order equations of gradient flow type}, Communications in
  Partial Differential Equations \textbf{34} (2009), no.~11, 1352--1397.

\bibitem[MRCS10]{maury_crowd}
Bertrand Maury, Aude Roudneff-Chupin, and Filippo Santambrogio, \emph{A
  macroscopic crowd motion model of gradient flow type}, Mathematical Models
  and Methods in Applied Sciences \textbf{20} (2010), no.~10, 1787--1821.

\bibitem[Ott96]{otto_contraction}
Felix Otto, \emph{${L}^1$-contraction and uniqueness for quasilinear
  elliptic--parabolic equations}, Journal of Differential Equations
  \textbf{131} (1996), no.~1, 20--38.

\bibitem[Ott01]{otto_pme}
\bysame, \emph{The geometry of dissipative evolution equations: the porous
  medium equation}, Comm. Partial Differential Equations \textbf{26} (2001),
  101--174.

\bibitem[San15]{OTAM}
Filippo Santambrogio, \emph{Optimal transport for applied mathematicians},
  Birk{\"a}user, NY \textbf{55} (2015), no.~58-63, 94.

\bibitem[V{\'a}z07]{vazquez}
Juan~Luis V{\'a}zquez, \emph{The porous medium equation: mathematical theory},
  Oxford University Press, 2007.

\end{thebibliography}

\providecommand{\bysame}{\leavevmode\hbox to3em{\hrulefill}\thinspace}
\providecommand{\MR}{\relax\ifhmode\unskip\space\fi MR }
\providecommand{\MRhref}[2]{%
  \href{http://www.ams.org/mathscinet-getitem?mr=#1}{#2}
}
\providecommand{\href}[2]{#2}

\end{document}